\documentclass[11pt,twoside, leqno]{article}

\usepackage{amssymb}
\usepackage{amsmath}
\usepackage{mathrsfs}
\usepackage{amsthm}
\usepackage{amsfonts}
\usepackage{txfonts}
\usepackage{enumerate}
\usepackage{hyperref}
\usepackage{esint}
\usepackage{latexsym,amssymb}
\usepackage{color}
\usepackage{marginnote}
\usepackage{multirow}
\usepackage{makecell} 
\usepackage{float} 

\usepackage{indentfirst}
\usepackage{graphicx}
\usepackage{pgf,tikz}
\usepackage{mathrsfs}
\usetikzlibrary{arrows}

\definecolor{ttttff}{rgb}{0.2,0.2,1.}
\definecolor{ttffcc}{rgb}{0.2,1.,0.8}
\definecolor{qqqqff}{rgb}{0.,0.,1.}

\definecolor{zzttqq}{rgb}{0.6,0.2,0.}
\definecolor{qqqqff}{rgb}{0.,0.,1.}

\allowdisplaybreaks

\hypersetup{
        colorlinks   = true,
        citecolor    = blue,
        linkcolor    = blue,
        urlcolor     = blue
}

\def\rr{{\mathbb R}}
\def\rn{{\mathbb{R}^n}}

\def\cc{{\mathbb C}}

\def\nn{{\mathbb N}}

\def\cj{{\mathcal J}}

\def\cl{{\mathcal L}}
\def\cm {{\mathcal M}}

\def\fz{\infty }

\def\lf{\left}
\def\r{\right}

\def\ls{\lesssim}
\def\gs{\gtrsim}

\def\noz{\nonumber}

\def\gfz{\genfrac{}{}{0pt}{}}

\def\loc{{\mathop\mathrm{\,loc\,}}}
\def\supp{\mathop\mathrm{\,supp\,}}

\newtheorem{theorem}{Theorem}[section]
\newtheorem{lemma}[theorem]{Lemma}
\newtheorem{corollary}[theorem]{Corollary}
\newtheorem{proposition}[theorem]{Proposition}
\newtheorem{example}[theorem]{Example}

\theoremstyle{definition}
\newtheorem{remark}[theorem]{Remark}
\newtheorem{definition}[theorem]{Definition}
\renewcommand{\appendix}{\par
   \setcounter{section}{0}%
   \setcounter{subsection}{0}%
   \setcounter{subsubsection}{0}%
   \gdef\thesection{\@Alph\c@section}%
   \gdef\thesubsection{\@Alph\c@section.\@arabic\c@subsection}%
   \gdef\theHsection{\@Alph\c@section.}%
   \gdef\theHsubsection{\@Alph\c@section.\@arabic\c@subsection}%
   \csname appendixmore\endcsname
 }

\numberwithin{equation}{section}

\definecolor{qqqqff}{rgb}{0,0,1}
\definecolor{ffqqqq}{rgb}{1,0,0}

\newcounter{rea}
\def\vd{\hyperref[e:VD]{\mathrm{(VD)}}}         

\def\SG{\hyperref[eq-LY]{\mathrm{(LY)}}}

\def\esup{\mathop{\mathrm{esssup}}}
\def\einf{\mathop{\mathrm{essinf}}}

\begin{document}

\arraycolsep=1pt

\title{\vspace{-1.5cm}
Mixed-Parabolicity and Mixed-Liouville Property \\
for
Products of Riemannian Manifolds
\footnotetext{\hspace{-0.35cm}
2020 {\it Mathematics Subject Classification}.  35K08;  31A15; 42B35.
\endgraf {\it Key words and phrases}.
Parabolicity, Liouville property, Green function, nonlinear capacity, mixed-norm Lebesgue space.
\endgraf
This project is supported by the National Natural Science Foundation of China
(\# 12371102, \# 12371206, \# 12201098, \# 12526436), the Fundamental Research Funds for the Central Universities (\# 050-63263078),
and
 the Natural Science Foundation of Liaoning Province (\# 2025-BS-0228).
}}
\author{
Liguang Liu, Yuhua Sun, and Suqing Wu}
\date{}
\maketitle


\begin{abstract}
Let $p_1,p_2\in(1,\infty)$ and $M=M_1\times M_2$ be the product of two geodesically complete Riemannian manifolds. In this paper, the authors first develop an
anisotropic potential-theoretic framework adapted to the Green operator $G^M$ and the mixed-norm Lebesgue space $L^{p_2}(L^{p_1})(M)$, and then demonstrate that the classical equivalence among \emph{parabolicity}, \emph{Green function integrability}, and \emph{Liouville property} persists in this genuinely anisotropic setting.

 More precisely, the authors establish the following  equivalence: $M$ is $L^{p_2}(L^{p_1})$-parabolic if and only if the Green function $G^M(x;\,\cdot\,)$ fails to belong to $L^{p_2'}(L^{p_1'})(M \setminus B(x,\,r))$, which is in turn equivalent to the $L^{p_2'}(L^{p_1'})$-Liouville property, where $p_i'$ denotes the conjugate exponent of $p_i$.  Under a weak radial Harnack-type inequality---in particular, under Li--Yau heat kernel estimates, and hence for products of manifolds with nonnegative Ricci curvature---these conditions are further equivalent to the divergence of the nonlinear mixed-potential $\mathcal{G}_{p_1,p_2}(f)$ for every nonzero nonnegative $f\in {\mathcal C}_c^\infty(M)$.

A key feature of this anisotropic theory is its sensitivity to the geometry of each factor \(M_i\), rather than merely to that of the total manifold \(M\). In contrast to the isotropic case, where parabolicity and the classical Liouville property holds on \(\mathbb{R}^n\) precisely when \(n \le 2\), the anisotropic setting exhibits a refined threshold: the \(L^{p_2}(L^{p_1})\)-parabolicity and the \(L^{p_2'}(L^{p_1'})\)-Liouville property holds on \(\mathbb{R}^{n_1} \times \mathbb{R}^{n_2}\) if and only if
$
D_{\mathrm{eff}} := \frac{n_1}{p_1} + \frac{n_2}{p_2} \le 2.
$
This effective dimension $D_{\mathrm{eff}}$ captures the anisotropic interplay between the exponents \(p_1, p_2\) and the geometries of \(M_1, M_2\).
\end{abstract}

\tableofcontents

\section{Introduction}

\subsection{Parabolicity and Liouville property on manifolds}\label{ss1.1}
Let \((M, g)\) be a noncompact, geodesically complete Riemannian manifold. Denote by \(d(\cdot,\, \cdot)\) the geodesic distance induced by \(g\) and by \(V\) the Riemannian measure (volume) induced by \({g}\). Then \((M, d, V)\) is a noncompact, geodesically complete metric measure space.
The \emph{Laplace--Beltrami operator} $\varDelta_M $ on $M$ is defined as follows:
$$\varDelta_M =\frac1{\sqrt{\det {g}}}\sum_{i,j} \partial_{x_i} \left( \sqrt{\det {g}}\, { g}^{ij}\partial_{x_j}\right),$$
where $g=({ g}_{ij})$ denotes the Riemannian metric tensor and $({ g}^{ij})=({ g}_{ij})^{-1}$. In this paper, we will use the notations $(M, g)$ or $(M,  d, V)$, or $(M, g, d,V)$, depending on the context.

For a function \(u\) in the local Sobolev space \( W^{1,2}_\loc(M)\), we say that \(u\) is \emph{superharmonic} on $M$ if for all nonnegative \(\phi \in \mathcal{C}_c^\infty(M)\) (the space of infinitely differentiable functions with compact support),
\[
\int_M \nabla u \cdot \nabla \phi \, dV\ge 0.
\]
Further, \(u\) is called \emph{subharmonic} if \(-u\) is superharmonic. If \(u\) is both superharmonic and subharmonic, then we say that \(u\) is \emph{harmonic} on \(M\).

 The notion of  \emph{parabolic manifold} originates from the uniformization theorem of Koebe--Poincar\'e. This theorem states that every simply connected Riemann surface $S$ is conformally equivalent to one of the following three canonical surfaces: the compact surface $\mathbb S^2$, the noncompact Euclidean space $\mathbb R^2$, or the noncompact hyperbolic plane $\mathbb H^2$.
Since superharmonicity is preserved under conformal transformations, a crucial distinction between the noncompact cases $\mathbb{R}^2$ and $\mathbb{H}^2$ lies in the behavior of positive superharmonic functions. On the Euclidean plane $\mathbb{R}^2$, every positive superharmonic function is necessarily constant. In contrast, the hyperbolic plane $\mathbb{H}^2$ admits a wealth of non-constant positive superharmonic functions.
This distinction leads to the following definition.

\begin{definition}\label{def-para}
  A Riemannian manifold $M$ is said to be \emph{parabolic} if every positive superharmonic function on $M$ is constant, and \emph{non-parabolic} otherwise.

\end{definition}

The characterization of parabolicity has been studied extensively by many authors. As the primary motivation for this paper, we list here the following equivalent conditions, which are taken from \cite[Theorem~5.1]{Grigoryan1999BAMS}.

\begin{theorem}[\cite{Grigoryan1999BAMS}]\label{thm-G-para}
The following properties are equivalent to each other:
\begin{enumerate}[\rm (i)]
  \item $M$ is parabolic.
  \item Every positive bounded superharmonic function on $M$ is constant.
  \item The positive Green function  does not exist on $M$.
  \item For any compact set $K\subset M$, the capacity
  $$\mathrm{cap}(K):=\inf\left\{\|\nabla f\|_{L^2(M)}^2:\  f\in {\mathcal C}_c^\infty(M),\, f=1\ \text{on}\ K\right\}=0.$$
\end{enumerate}
\end{theorem}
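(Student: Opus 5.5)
The plan is to prove the cycle (i)$\Rightarrow$(ii)$\Rightarrow$(iii)$\Rightarrow$(iv)$\Rightarrow$(i), working throughout with an exhaustion of $M$ by precompact open sets $\Omega_1\Subset\Omega_2\Subset\cdots$ with smooth boundary, $\bigcup_k\Omega_k=M$. On each $\Omega_k$ the Dirichlet Laplacian has a positive Green function $G^{\Omega_k}(x,y)$; by the maximum principle $G^{\Omega_k}$ increases in $k$. We define $G(x,y):=\lim_k G^{\Omega_k}(x,y)$ and say that the positive Green function exists if this limit is finite for some (equivalently, by Harnack's inequality, all) $x\ne y$. The implication (i)$\Rightarrow$(ii) is immediate.

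\emph{(ii)$\Rightarrow$(iii).} Suppose $G$ is finite. Fix a small closed ball $\overline B\ni x_0$ and put $u:=\min\{G(x_0,\cdot),\,c\}$, where $c$ is the minimum of $G(x_0,\cdot)$ on $\partial B$. This $u$ is a positive, bounded superharmonic function: it is the minimum of two superharmonic functions, and $G(x_0,\cdot)$ is harmonic away from $x_0$ and superharmonic near it. To see that $u$ is not constant, I will compare $G^{\Omega_k}(x_0,\cdot)$ on $\Omega_k\setminus\overline B$ with a multiple of the harmonic measure of $\partial B$. Since $G^{\Omega_k}=0$ on $\partial\Omega_k$, the maximum principle gives $G^{\Omega_k}\le \max_{\partial B}G^{\Omega_k}$, and $G^{\Omega_k}$ is harmonic with nonzero flux through $\partial B$ (total flux $1$). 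Passing to the limit, the flux identity $\int_{\partial B}\partial_\nu G=-1$ rules out $G(x_0,\cdot)\equiv c$ outside $B$. Hence $u$ is nonconstant, contradicting (ii).

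\emph{(iii)$\Rightarrow$(iv).} Let $K\subset B(x_0,r)$ and let $\Omega_k\supset K$. Consider the capacity potential $h_k$ of $K$ in $\Omega_k$: it is harmonic in $\Omega_k\setminus K$, equals $1$ on $K$ and $0$ on $\partial\Omega_k$, and $\mathrm{cap}(K,\Omega_k)=\int|\nabla h_k|^2$ is the flux of $h_k$. Green's formula gives the comparison $h_k\ge \mathrm{cap}(K,\Omega_k)\,\inf_{y\in K}G^{\Omega_k}(\cdot,y)$, and equality of fluxes yields the reverse inequality
\[
\mathrm{cap}(K,\Omega_k)\le \Bigl(\inf_{x\in\partial B,\,y\in K}G^{\Omega_k}(x,y)\Bigr)^{-1}.
\]
By the Harnack inequality on compact sets, if $G^{\Omega_k}(x_0,y_0)\to\infty$ then the infimum above also tends to infinity. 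Therefore $\mathrm{cap}(K)=\lim_k\mathrm{cap}(K,\Omega_k)=0$.

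\emph{(iv)$\Rightarrow$(i).} Let $u>0$ be superharmonic. For $K$ compact with nonempty interior, the capacity potentials satisfy $h_k\to0$ in $W^{1,2}_{\loc}$ and pointwise, since $\int|\nabla h_k|^2\to0$ and $h_k=1$ on $K$, so $1-h_k$ is a minimizer-type function tending to $1$. By the comparison principle on $\Omega_k\setminus K$, we have $u\ge (\inf_K u)\,h_k$. I will apply this to the function $v:=\min(u,\inf_K u)$ versus $\inf_K u\cdot(1-\ldots)$; the cleaner route is to use $\min(u,m)$ with $m$ a level value. Given $a<b$ in the range of $u$, set $w:=\min(u,b)$, which is superharmonic, and compare it with $a+(b-a)h_k$ on the complement of the superlevel set. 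As $k\to\infty$ the capacity vanishes, which forces $w\ge b$ everywhere, a contradiction. The main obstacle is precisely this step: pushing the comparison through with only $W^{1,2}_{\loc}$ superharmonic functions, which requires taking lower semicontinuous representatives and using the weak maximum principle with test functions $(a+(b-a)h_k-w)_+$. I would handle it by choosing $K$ as a closed ball where $\operatorname{ess\,inf}_K u\ge b$ (after swapping the roles so that $K$ lies where $u$ is large), so that the test function is compactly supported in $\Omega_k\setminus K$, and the weak formulation then gives the inequality directly.
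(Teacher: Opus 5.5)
The paper does not prove this theorem; it quotes it from Grigor'yan's survey. I therefore assess your argument on its own terms. Steps (i)$\Rightarrow$(ii) and (iii)$\Rightarrow$(iv) are sound. In the latter, your upper bound on $\mathrm{cap}(K,\Omega_k)$ follows directly from $h_k=G^{\Omega_k}\mu_k\le 1$ with $\mu_k(K)=\mathrm{cap}(K,\Omega_k)$, so no separate ``equality of fluxes'' is needed.

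\textbf{Main gap: (iv)$\Rightarrow$(i).} You assert $h_k\to 0$, but in fact $h_k\uparrow 1$, and this is precisely the fact the argument needs. By comparison, $h_k$ increases in $k$ with $0\le h_k\le 1$. The limit $h$ satisfies $\int_M|\nabla h|^2\le\liminf_k\int|\nabla h_k|^2=\lim_k\mathrm{cap}(K,\Omega_k)=\mathrm{cap}(K)=0$. Hence $h$ is constant on $M$, and since it equals $1$ on $K$, $h\equiv 1$.

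With $h_k\to 0$, your comparison $u\ge(\inf_K u)\,h_k$ would give nothing. Your later conclusion ``$w\ge b$ everywhere'' silently uses $h_k\to 1$, contradicting what you stated.

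The level-set comparison also fails as written. Comparing $w=\min(u,b)$ with $a+(b-a)h_k$ on $\Omega_k\setminus K$ requires $w\ge a$ on $\partial\Omega_k$. Since you chose $a$ in the range of $u$, you only know $w>0$ there.

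The repair is to take $a=0$, which is just your first comparison:
\begin{enumerate}
\item Choose $b$ with $u(x_1)<b<u(x_2)$ for the lower semicontinuous representative.
\item Take $K$ a closed ball having a neighbourhood inside the open set $\{u>b\}\ni x_2$.
\item Then $(bh_k-w)_+$ vanishes near $K$ and is dominated by $bh_k\in W^{1,2}_0(\Omega_k)$, so it is an admissible test function on $\Omega_k\setminus K$.
\item The weak maximum principle gives $w\ge bh_k$. Letting $k\to\infty$ yields $u\ge b$ a.e.\ on $M$, contradicting $u(x_1)<b$.
\end{enumerate}

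\textbf{Smaller hole: (ii)$\Rightarrow$(iii).} If $u=\min\{G(x_0,\cdot),c\}$ is constant, you only learn $G(x_0,\cdot)\ge c$ on $M$. You do not get $G(x_0,\cdot)\equiv c$ outside $B$, so the flux identity does not yet apply. You can close this in either of two ways.

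First, use the strong minimum principle. The minimum value $c$ is attained on $\partial B$, at an interior point of the connected set $M\setminus\{x_0\}$. This forces $G(x_0,\cdot)\equiv c$ there, which is incompatible with $-\varDelta_M G(x_0,\cdot)=\delta_{x_0}$.

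Second, and more cleanly, show that $m:=\inf_{M\setminus B}G(x_0,\cdot)=0$. Once $\overline B\subset\Omega_k$, the function $G(x_0,\cdot)-G^{\Omega_k}(x_0,\cdot)$ is harmonic in $\Omega_k$ and at least $m$ on $\partial\Omega_k$. Hence $G^{\Omega_k}(x_0,\cdot)\le G(x_0,\cdot)-m$, and letting $k\to\infty$ gives $m\le 0<c$.
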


An interesting probabilistic characterization of parabolicity is that $M$ is parabolic if and only if Brownian motion on $M$ is recurrent (see also \cite{Grigoryan1999BAMS}). We shall not, however, pursue this direction further here.

An elegant way to determine whether a manifold is parabolic is to examine its volume growth at infinity.
For convenience,  denote the open geodesic \emph{ball} centered at \(x\in M\) with radius \(r\in(0,\infty)\) by
\[
B(x,r) := \{ z \in M :\ d(z,x) < r \},
\]
and set
\[
V(x,r):=V\bigl(B(x,r)\bigr).
\]
Cheng and Yau \cite{ChengYau1975CPAM} proved that a noncompact, geodesically complete Riemannian manifold \(M\) is parabolic if, for some point \(x \in M\) and some positive constant \(C\),
\begin{align*}
V(x,r) \le C r^2 \quad \text{as }\ r \to \infty.
\end{align*}
A sharp sufficient condition for parabolicity was later obtained by Grigor'yan \cite{Grigoryan1985MSb}, Karp \cite{Karp1982AMS}, Varopoulos  \cite{Varopoulos1983WMS} independently, who showed that
\begin{align}\label{eq-parabolic}
\int^\infty \frac{r\,dr}{V(x,r)} = \infty \quad \Rightarrow \quad M \ \, \text{is parabolic}.
\end{align}
The integral condition in \eqref{eq-parabolic} is necessary provided that \((M,  g)\) has nonnegative Ricci curvature. Indeed, if \((M,  g)\) has nonnegative Ricci curvature, Li and Yau \cite{LiYau1986} proved that the heat kernel, denoted by \(\{p_t^M\}_{t\in(0,\infty)}\), exists on \(M\) and satisfies the following two-sided \emph{Gaussian estimate} $\SG$: for all \(t\in (0,\infty)\) and \(x,y \in M\),
\begin{align}\label{eq-LY}
p_{t}^M(x,y) \asymp \frac{C}{\sqrt{V(x,\sqrt{t})\,V(y,\sqrt{t})}} \exp\left( -c \frac{d(x,y)^2}{t}\right), \tag*{\hyperref[eq-LY]{$(\mathrm{LY})$}}
\end{align}
where the notation \(\asymp\) means that both \(\leq\) and \(\geq\) are valid, but with possibly different values of the positive constants \(C\) and \(c\) on each side. It was respectively proved by Grigor'yan \cite{Grigoryan1991MSb} and  Saloff-Coste \cite{Saloff-Coste1992IMRN} that the Li--Yau estimate $\SG$ is equivalent to  the volume doubling condition $\vd$ and the known Poincar\'e inequality. For a further generalization
of this equivalence to the setting of general metric measure spaces, we refer the reader to
Grigor'yan--Hu--Lau \cite{GrigoryanHuLau2015JMSP}.
Under  $\SG$, one has
\begin{align}\label{eq-parabolic-iff}
\int^\infty \frac{r\,dr}{V(x,r)} = \infty \quad \Leftrightarrow \quad \text{Green function does not exist}.
\end{align}
In view of Theorem~\ref{thm-G-para}, under $\SG$, the integral condition in \eqref{eq-parabolic-iff} is both necessary and sufficient for $M$ to be parabolic, as well as for the classical Liouville property.

\subsection{$L^p$-parabolicity and $L^{p'}$-Liouville property on manifolds}\label{ss1.2}

Let \((M,  g,d,V)\) be a noncompact geodesically complete Riemannian manifold.
Suppose that  $\{p_t^M\}_{t\in(0,\infty)}$ is the \emph{heat kernel} on \(M\) generated by
the Laplace--Beltrami operator $\varDelta_M$.
Then
$p_t^M(x,y)$
 is ${\mathcal C}^\infty$-smooth jointly in $(t,x,y)\in (0,\infty)\times M\times M$ (see \cite[Theorem~7.20]{Grigoryan2009book}).
Associated to  $\{p_t^M\}_{t\in(0,\infty)}$, we naturally associate the
\emph{heat semigroup} \(\{P_t^M\}_{t\in(0,\infty)}\), defined for all \(f \in L^2(M)\) and \(t \in( 0,\infty)\), by
\begin{equation*}
P_{t}^Mf(x)= e^{t \varDelta_M} f(x) =\int_{M}p_{t}^M(x,y) f(y)\, dV(y)\quad \text{for all}\ \, x\in M.
\end{equation*}
The \emph{Green function} \(G^M(x,\, y)\) on \(M\) can be defined by
\begin{equation}\label{eq:Green-kernel}
G^M(x,\, y) = \int_0^\infty p_t^{M}(x, y) \, dt,
\end{equation}
where the integral may diverge to \(+\infty\). By a slight abuse of notation, we
still use \(G^M\) to denote the \emph{Green operator} on \(M\), which acts on nonnegative measurable functions \(f\) on \(M\) by
\begin{equation}\label{eq:Green-operator}
G^M f(x) = \int_M G^M(x,\, y) \, f(y) \, dV(y)=  \int_0^\infty P_t^{M}f(x) \, dt \quad \text{for all}\ \,x \in M.
\end{equation}
For a general measurable function \(f\) on \(M\), via writing \(f = f^+ - f^-\) with \(f^+ := \max\{f, 0\}\) and \(f^- := -\min\{f, 0\}\), we define
\[
G^M f := G^M f^+ - G^M f^-,
\]
whenever the right-hand side is well-defined (i.e., not of the form \(\infty - \infty\)).
For a systematic study of Green functions on manifolds, we refer the reader to Grigor'yan's monograph \cite{Grigoryan2009book}.

Faraji and Grigor'yan \cite{FarajiGrigoryan2019RMI} introduced
the notion of biparabolicity for Riemannian manifolds. A function \(u \in \mathcal{C}^4(M)\) is said to be \emph{bi-superharmonic} if it satisfies
\[
-\varDelta_M u \ge 0 \quad \text{and} \quad \varDelta_M^2 u \ge 0.
\]
A Riemannian manifold \(M\) is called \emph{bi-parabolic} if every positive bi-superharmonic function on \(M\) is harmonic -- a property that can be viewed as a biharmonic analogue of the Liouville property. Faraji and Grigor'yan \cite[Theorem~3.1]{FarajiGrigoryan2019RMI} proved that \(M\) is biparabolic if and only if
\[
G^M(G^M f) \equiv \infty \quad \text{for every nonzero } \ 0\le f \in {\mathcal C}_c^\infty(M).
\]
Their proof makes essential use of  properties of  Green functions.

Building upon further developments in Green potential theory on manifolds, Grigor'yan, Pessoa, and Setti \cite{GrigoryanPessoaSetti2025} extended Theorem~\ref{thm-G-para} and established the equivalence between \(L^p\)-parabolicity and the \(L^{p'}\)-Liouville property (see Definition \ref{def-Lp-para}
and \ref{def-Lq-Liou}). Here \(p'\) denotes the H\"{o}lder conjugate exponent of \(p\), i.e., \(\frac{1}{p} + \frac{1}{p'} = 1\), with the convention that \(p' = \infty\) when \(p = 1\).

\begin{definition}\label{def-Lp-para}
Let \(p \in (1,\infty)\). For an arbitrary set \(E \subset M\), its \emph{Riesz capacity} \(\operatorname{cap}_p(E)\) is defined by
\[
\operatorname{cap}_p(E) := \inf\left\{ \| f \|_{L^p(M)}^{p}:\ \ 0 \le f \in L^{p}(M),\; G^{M} f \ge \mathbf{1}_{E} \right\}.
\]
A Riemannian manifold \(M\) is said to be \emph{\(L^p\)-parabolic} if for every precompact open set \(E \subset M\),
\[
\operatorname{cap}_p(\overline{E}) = 0.
\]
\end{definition}

\begin{definition}\label{def-Lq-Liou}
Let \(q \in (1,\infty)\). A Riemannian manifold \(M\) is said to possess the \emph{\(L^q\)-Liouville property} if every nonnegative superharmonic function \(u\) on \(M\) with \(u \in L^q(M)\) is constant.
\end{definition}

\begin{theorem}[\cite{GrigoryanPessoaSetti2025}]\label{thm-GPS-para}
Let $p\in(1,\infty)$. The following properties are equivalent to each other:
\begin{enumerate}[\rm (i)]
  \item $M$ is $L^p$-parabolic.
  \item $G^M\big((G^Mf)^{p'-1}\big)(x)=\infty$ for some / all nonzero nonnegative $f\in {\mathcal C}_c^\infty(M)$ and some / all $x\in M$.
  \item $G^M(x,\cdot)\notin L^{p'}(M\setminus B(x,\,r))$  for some / all $x\in M$
  and some / all $r\in (0,\infty)$.
  \item $M$ possesses the \emph{\(L^{p'}\)-Liouville property}.
\end{enumerate}
\end{theorem}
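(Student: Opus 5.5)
The plan is to first dispose of the parabolic case and then, assuming the Green function is finite, reduce everything to one quantity: whether $G^Mf\in L^{p'}(M)$ for a fixed nonzero $0\le f\in{\mathcal C}_c^\infty(M)$. The main tools are a local Harnack comparison, duality for the Riesz capacity, and the minimality of $G^M$ among positive superharmonic functions.

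\emph{The parabolic case.} Suppose $M$ is parabolic in the sense of Theorem~\ref{thm-G-para}, so $G^M\equiv\infty$. Then for every nonzero $0\le f$ we have $G^M(\varepsilon f)\ge \mathbf 1_{\overline E}$ for every $\varepsilon>0$. Letting $\varepsilon\to0$ gives $\operatorname{cap}_p(\overline E)=0$, so (i) holds. Conditions (ii) and (iii) hold trivially. Condition (iv) holds because Theorem~\ref{thm-G-para} already forces every nonnegative superharmonic function to be constant. Hence from now on I assume $G^M$ is finite off the diagonal, positive, and harmonic in each variable away from the pole.

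\emph{Step 1: local comparison.} Fix $x$, $r$, and a nonzero $0\le f\in{\mathcal C}_c^\infty(M)$. By the local Harnack inequality on compact sets, $G^M(x,\cdot)$, $G^M(x',\cdot)$ and $G^Mf$ are mutually comparable, with constants, outside any fixed compact set $K$ containing $x$, $x'$, $\operatorname{supp} f$ and $\overline{B(x,r)}$. To see this, compare them on $\partial K$ and use that each is the minimal positive harmonic extension. This extension step needs $G^M=\lim G^{\Omega_k}$ along an exhaustion together with the maximum principle on $\Omega_k\setminus K$. On compact sets, $G^Mf$ is bounded, and $G^M(x,\cdot)$ is locally bounded away from $x$. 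Consequently:
\begin{itemize}
\item (iii) for one pair $(x,r)$ is equivalent to (iii) for all pairs, and both are equivalent to $G^Mf\in L^{p'}(M)$, for one or for all $f$;
\item $G^M\big((G^Mf)^{p'-1}\big)(x)=\int_M G^M(x,y)\,(G^Mf)^{p'-1}(y)\,dV(y)$ is comparable to $\int_{M\setminus K}(G^Mf)^{p'}\,dV$ plus a finite term, so (ii)$\Leftrightarrow$(iii).
\end{itemize}
The finite term needs $G^M(x,\cdot)\in L^1_{\loc}$, which holds since $G^M(x,\cdot)$ is integrable near the pole.

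\emph{Step 2: (i)$\Leftrightarrow$(iii) via duality.} I will prove the dual formula
\[
\operatorname{cap}_p(K)^{1/p}=\sup\Big\{\mu(K)\big/\|G^M\mu\|_{L^{p'}(M)}:\ \mu\ \text{a nonnegative measure on }K\Big\}.
\]
The inequality "$\ge$" is direct: if $G^Mg\ge\mathbf 1_K$, then by symmetry of $G^M$ and H\"older,
\[
\mu(K)\le\int_M G^Mg\,d\mu=\int_M g\,G^M\mu\,dV\le\|g\|_{L^p(M)}\|G^M\mu\|_{L^{p'}(M)}.
\]
The inequality "$\le$" is a minimax (Hahn--Banach) argument over the convex set of admissible $g$. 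By Step~1, every $\mu$ on a compact set $K$ satisfies $G^M\mu\asymp G^Mf$ outside a larger compact set. Locally, $G^M\mu\in L^{p'}$ at least for $\mu=\mathbf 1_K\,dV$. Taking $\mu=\mathbf 1_{\overline E}\,dV$ then shows that $G^Mf\in L^{p'}$ implies $\operatorname{cap}_p(\overline E)>0$. Conversely, if $G^Mf\notin L^{p'}$, then no measure on $\overline E$ has $G^M\mu\in L^{p'}$, and the dual formula gives capacity zero. I expect this step, specifically the "$\le$" direction of the duality and the handling of singular measures near the pole, to be the main obstacle. The first thing I would try is to restrict to absolutely continuous $\mu$ with bounded density, which suffices after a lower-semicontinuity argument.

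\emph{Step 3: (iii)$\Leftrightarrow$(iv).} If $G^M(x,\cdot)\in L^{p'}(M\setminus B(x,r))$, then $u:=\min\{G^M(x,\cdot),1\}$ is a positive, nonconstant superharmonic function. It is bounded near $x$, and outside the ball it is dominated by $G^M(x,\cdot)$, so $u\in L^{p'}(M)$ and the Liouville property fails. Conversely, let $u\ge0$ be a nonconstant superharmonic function in $L^{p'}$. By the strong minimum principle, $u>0$. Comparing on the exhaustion $\Omega_k\setminus\overline{B(x,r)}$ gives $u\ge c\,G^{\Omega_k}(x,\cdot)$ there, with $c=\min_{\partial B}u/\max_{\partial B}G^M(x,\cdot)$. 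Letting $k\to\infty$ yields $G^M(x,\cdot)\le c^{-1}u$ outside $B(x,r)$, and therefore (iii) fails. Both implications taken together give (iii)$\Leftrightarrow$(iv) in the form stated.
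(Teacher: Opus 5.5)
Your proposal is correct and follows essentially the same route as the paper's proof of the mixed-norm analogue (Theorems~\ref{thm-para-Green} and~\ref{thm-G-poten}) specialized to $p_1=p_2=p$. That route uses the H\"older bound against $G^M\mathbf 1_{\overline E}$ plus the minimax dual formula of Theorem~\ref{thm-cap=mu} for (i)$\Leftrightarrow$(iii), a truncated Green function and the comparison principle of Lemma~\ref{lem-compari} for (iii)$\Leftrightarrow$(iv), and Harnack comparison off a compact set for (ii)$\Leftrightarrow$(iii); no radial Harnack hypothesis is needed there because the factor $\|G^Mf(\cdot,y_2)\|^{p_2'-p_1'}$ is identically $1$.

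In the step you flag as the obstacle, no restriction to absolutely continuous measures is needed, and for compact sets of zero volume but positive capacity it would even change the supremum. The minimax theorem applies directly on the weak$^*$-compact set of probability measures on $K$, since $\mu\mapsto\int G^Mg\,d\mu$ is lower semicontinuous with values in $[0,\infty]$ by Corollary~\ref{cor-Gf-lowersemi}. Also, in Step~1 you mean that (iii) holds if and only if $G^Mf\notin L^{p'}(M)$, not $G^Mf\in L^{p'}(M)$.
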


For any \(p \in [1,\infty)\) and any compact set \(K \subset M\), define
\begin{align*}
\operatorname{cap}_{p}^{\varDelta}(K) := \inf\left\{ \| \varDelta_M f \|_{L^p(M)}^{p} :\ \, f \in \mathcal{C}_c^{\infty}(M),\; f \ge 1 \text{ on } K \right\}.
\end{align*}
For \(p \in (1,\infty)\), this provides an equivalent definition of \(\operatorname{cap}_p(K)\) since (see \cite[Theorem~2.15]{GrigoryanPessoaSetti2025})
\[
\operatorname{cap}_{p}^{\varDelta}(K) = \operatorname{cap}_{p}(K).
\]
For the endpoint case \(p = 1\) and the case \(M = \mathbb{R}^n\), one has (see \cite[Appendix~4E]{BrezisMarcusPonce2007AMS})
\[
\operatorname{cap}_{1}^{\varDelta}(K) = 2\operatorname{cap}(K),
\]
where we recall that \(\operatorname{cap}\) denotes the standard capacity appearing in Theorem~\ref{thm-G-para}. In light of these observations, it is clear that Theorem~\ref{thm-G-para} can be viewed as the endpoint case \(p = 1\) of Theorem~\ref{thm-GPS-para}.

According to \cite[Proposition~5.3]{GrigoryanPessoaSetti2025}, if $M$ satisfies $\SG$, then each condition in Theorem~\ref{thm-GPS-para} is equivalent to the following \emph{volume growth criterion}
\begin{align}\label{eq-Lppara-iff-vol}
\int^\infty \left(\int_r^\infty \frac{t}{V(t)}\,dt\right)^{p'} V'(t)\,dr=\infty,
\end{align}
where $V(t)$ denotes the volume of the ball  $B(o,r)$ for some fixed point $o\in M$.

\subsection{Mixed-parabolicity and mixed-Liouville property on product manifolds}\label{ss1.3}

For each $i \in \{1,2\}$, let $(M_i, g_i, d_i, V_i)$ be a Riemannian manifold, where $g_i$, $d_i$, and $V_i$ denote the Riemannian metric, the corresponding Riemannian distance, and the Riemannian measure on $M_i$, respectively.
Throughout this paper,
the Riemannian product manifold $M := M_1 \times M_2$ is always equipped  with the natural product Riemannian metric $g = g_1 \oplus g_2$. We shall accordingly refer to the   \emph{product manifold} $(M, d, V)$, since the product Riemannian metric $g$  canonically induces the corresponding Riemannian distance $d$ and Riemannian measure $V$ on $M= M_1 \times M_2$. Further details are given in Section~\ref{ss2.1}.

The present work is the anisotropic counterpart of  Theorems~\ref{thm-G-para} and~\ref{thm-GPS-para}.
Our main aim is to explore the role of each individual factor
$M_i$ ​
  plays in the equivalence between \emph{parabolicity} and the \emph{Liouville property}. This investigation is particularly natural, since for two geodesically complete Riemannian manifolds \(M_1\) and \(M_2\), it is known that $$
M_1\ \text{and}\ M_2 \ \text{are parabolic}\ \
\nRightarrow \ \ M_1 \times M_2\ \text{is parabolic}.
  $$
A typical example is \(M_1 = M_2 = \mathbb{R}^2\). It is therefore worthwhile to study these properties on product manifolds \(M = M_1 \times M_2\).

This line of inquiry is closely  related to {\bf Problems 15 and 16} posed by Grigor'yan \cite{Grigoryan1999BAMS}, which mainly concern the stochastic completeness of $M_1 \times M_2$ and the existence of bounded harmonic functions when both $M_1$ and $M_2$ are stochastically complete or parabolic. We also note that earlier investigations of Liouville-type properties for the Schr\"odinger equation $\varDelta u - c(x)u = 0$ on product manifolds of the form $M_1 \times K$, with $K$ compact, can be found in the work of Losev \cite{Losev2012MN}.

The fundamental tools used in this paper are heat kernels and Green functions.
For $i\in\{1,2\}$, denote by $\{p_t^{M_i}\}_{t\in(0,\infty)}$ the \emph{heat kernel} of the
manifold $(M_i, { g}_i)$.
According to \cite[p.\,213, Exercise~7.14]{Grigoryan2009book},  the heat kernel $\{p_t^{M}\}_{t\in(0,\infty)}$ on
the Riemannian product manifold $M= M_1 \times M_2$ exists and satisfies that, for all $t\in(0,\infty)$, $x=(x_1,x_2)\in M$ and $y=(y_1,y_2)\in M$,
\begin{align}\label{eq-ptptMi}
p_t^{M}(x,y)=p_t^{M_1}(x_1,y_1)\,p_t^{M_2}(x_2,y_2).
\end{align}
Indeed, this factorization property of the heat kernel \(\{p_t^{M}\}_{t\in(0,\infty)}\) is a consequence of the fact that
the corresponding Laplace--Beltrami operator \(\varDelta_M \) on the Riemannian product manifold
\(M = M_1 \times M_2\) splits as a sum:
\begin{align}\label{eq-DeltaM}
\varDelta_M  = \varDelta_{M_1} + \varDelta_{M_2},
\end{align}
where \(\varDelta_{M_i}\) denotes the Laplace--Beltrami operator on \(M_i\).

With the product heat kernel \(\{p_t^{M}\}_{t\in(0,\infty)}\) given in \eqref{eq-ptptMi}, we can then define the corresponding Green function \(G^M(x,\,y)\) and Green operator \(G^M\) via \eqref{eq:Green-kernel} and \eqref{eq:Green-operator}.
Since \(M = M_1 \times M_2\), by a point \(x = (x_1, x_2) \in M\) we mean that \(x_1 \in M_1\) and \(x_2 \in M_2\). For any \(x = (x_1, x_2) \in M\) and \(y = (y_1, y_2) \in M\), to avoid ambiguity in the notation \( G^M(x, y) \), we adopt the clearer convention
\[
 G^M(x;\, y) = G^M(x_1, x_2;\, y_1, y_2).
\]
 For any  \(f \in L_{\mathrm{loc}}^1(M)\) and \(x = (x_1, x_2) \in M\), there is
\begin{align*}
G^M f(x)
&= \int_M G^M(x;\, y) f(y) \, dV(y)\\
&= \int_{M_2} \int_{M_1} G^M(x_1, x_2;\, y_1, y_2) f(y_1, y_2) \, dV_{1}(y_1) \, dV_{2}(y_2).
\end{align*}
Analogously, if $\mu\in \cm^+(M)$ (i.e., the set of all nonnegative Radon measures
 on $M$), then
 \[
G^M \mu(x) = \int_M G^M(x;\, y) \, d\mu(y).
\]
Hereinafter, we understand \(p_t^M(x,y)=p_t^M(x;\, y)\), \(p_t^Mf(x)\) and \(p_t^M\mu(x)\) in a similar manner. Further properties of Green functions that will be used in this paper are given in Section~\ref{ss2.2}.

As for the main idea of this paper, just as classical \(L^p\)-capacity theory serves as the foundation for the isotropic results (Theorems~\ref{thm-G-para} and~\ref{thm-GPS-para}), we shall employ an \emph{anisotropic} capacity theory on the product manifold \(M = M_1 \times M_2\) built on \emph{mixed-norm Lebesgue spaces}. We begin by introducing these spaces on \(M = M_1 \times M_2\), which were originally studied by Benedek and Panzone \cite{BenedekPanzone1961Duke}.

\begin{definition}\label{def-LupLvq}
Let \(M = M_1 \times M_2\).
For any  $p_1,\ p_2\in(0,\fz)$,
define the {\it  mixed-norm
Lebesgue space} $L^{p_2}(L^{p_1})(M)$  to be the set of all measurable functions $f$ on $M$ such that
\begin{align*}
	\|f\|_{L^{p_2}(L^{p_1})(M)}
	:= \lf(\int_{M_2}\lf(\int_{M_1}|f(x_1,x_2)|^{p_1} \,dV_{1}(x_1)\r)
^{\frac {p_2}{p_1} }\,dV_{2}(x_2)\r)^{\frac 1 {p_2}}
	< \infty.
\end{align*}
Similarly, define $L^{\infty}(L^{p_1})(M)$ and $L^{p_2}(L^{\infty})(M)$ in terms of the (quasi-)norms
\begin{align*}
	\|f\|_{L^{\infty}(L^{p_1})(M)}
	:= \esup_{x_2\in M_2}\lf(\int_{M_1}|f(x_1,x_2)|^{p_1} \,dV_{1}(x_1)\r)
^{\frac 1 {p_1} }
\end{align*}
and
\begin{align*}
	\|f\|_{L^{p_2}(L^{\infty})(M)}
	:= \lf(\int_{M_2}\lf(\esup_{x_1\in M_1}|f(x_1,x_2)|^{p_2}\r) \,dV_{2}(x_2)\r)
^{\frac 1 {p_2} }.
\end{align*}
respectively. In the case $p_1=p_2=p$, we will simply write $L^{p_2}(L^{p_1})(M)$
as $L^p(M)$.
\end{definition}

Further duality and density results for $L^{p_2}(L^{p_1})(M)$ are discussed in Section \ref{ss3.1} below.
In fact, mixed-norm spaces have attracted significant attention in harmonic analysis and PDEs. For instance, the well-known Strichartz estimate in PDEs is a space-time mixed-norm estimate that describes the regularity and decay of solutions to dispersive PDEs, such as the wave equation and Schr\"odinger equation (see, for example, Keel--Tao \cite{KeelTao1998AJM}), since neither a pure space norm nor a pure time norm can capture both the propagation and diffusion effects simultaneously. Recent developments on mixed-norm function spaces and the boundedness of various operators can be found in \cite{Kurtz2007RMJM, CleanthousGeorgiadisNielsen2017JGA, HuangLiuYangYuan2019PAMS, ChenSun2022JGA} and references therein.

In the following two definitions, we introduce the three main concepts of this paper: \emph{mixed-capacity}, \emph{mixed-parabolicity}, and the \emph{mixed-Liouville property}. Each of these notions is associated with the mixed-norm Lebesgue space \(L^{p_2}(L^{p_1})(M)\).

\begin{definition}\label{def-Rpquv}
Let \(M = M_1 \times M_2\) and \(p_1, p_2 \in (1,\infty)\).
For any open set \(\Omega \subset M\), and any set \(E \subset \Omega\), define the \emph{mixed-capacity} by
\begin{align}\label{eq-cap}
\operatorname{cap}_{p_1,\,p_2}(E;\,\Omega) := \inf\Bigl\{ \| f \|_{L^{p_2}(L^{p_1})(M)}^{p_2} :\ 0 \le f \in L^{p_2}(L^{p_1})(M),\; G^{\Omega} f \ge \mathbf{1}_E \Bigr\}.
\end{align}
In the special case \(\Omega = M\), we write \(\operatorname{cap}_{p_1,\,p_2}(E;\,M)\) simply as \(\operatorname{cap}_{p_1,\,p_2}(E)\).
\end{definition}

\begin{definition}\label{def-product-para-Liouville}
Let \(p_1,p_2 \in (1,\infty)\).
A product Riemannian manifold \(M= M_1 \times M_2\) is said to be \emph{\(L^{p_2}(L^{p_1})\)-parabolic} if for every precompact open set \(E \subset M\),
\[
\operatorname{cap}_{p_1,\,p_2}(\overline{E}) = 0.
\]
 A product Riemannian manifold \(M= M_1 \times M_2\) is said to admit the \emph{\(L^{p_2}(L^{p_1})\)-Liouville property} if every nonconstant nonnegative superharmonic function \(u\) on \(M\) with \(u \in L^{p_2}(L^{p_1})(M)\) is constant.
\end{definition}

\begin{remark}
In Definitions~\ref{def-Rpquv} and~\ref{def-product-para-Liouville}, one is allowed to define the corresponding capacities, parabolicity, and Liouville property for the endpoint cases $p_1, p_2 = 1$ or $\infty$. However, in this paper, we focus only on the case when both $p_1, p_2 \in (1, \infty)$.
\end{remark}

\begin{remark}
If $M = \mathbb{R}^n \times \mathbb{R}^n$ with $n \ge 2$, then the Green operator $G^M$ reduces to the classical \emph{Riesz potential operator} $\mathcal{I}_2^{(2n)}$, given by
\[
\mathcal{I}_2^{(2n)} f(x) = \frac{(n-2)!}{4\pi^{n}} \int_{\mathbb{R}^{2n}} |x-y|^{2-2n} f(y) \, dy.
\]
In this setting, the following \emph{mixed-Riesz capacity} was introduced in \cite{JinLiuWuXiao2026JGA}: for any set $E \subset \mathbb{R}^{2n}$,
\[
\mathcal{R}_{p_1,\, p_2}(E) := \inf \left\{ \| f \|_{L^{p_2}(L^{p_1})(\rn\times\rn)}^{p_1}:\ \ f\ge0, \ \mathcal{I}_2^{(2n)} f \ge \mathbf{1}_E \right\}.
\]
Moreover, the  capacitary inequalities for such mixed-Riesz capacity were established in \cite{JinLiuWuXiao2026JGA}. It is clear that
\[
\operatorname{cap}_{p_1,\, p_2}(E;\ \rn\times\rn) = \mathcal{R}_{p_1, p_2}(E)^{\frac{p_2}{p_1}}.
\]
 As a consequence of Theorem \ref{thm-main} and Proposition~\ref{prop-para-mix-Rn} below, we observe that
$$\mathcal{R}_{p_1,\, p_2}\equiv 0\quad \ \Leftrightarrow\quad \ \frac{n}{p_1}+\frac{n}{p_2}\le 2.$$
\end{remark}

Basic measure-theoretic properties of the mixed-capacity \(\operatorname{cap}_{p_1, p_2}\) are established in Section~\ref{sec-cap}, and several equivalent characterizations are given in Section~\ref{sec-cap-equiv}. Moreover, for any compact set \(K \subset \Omega\), we show in Section~\ref{sec-mix-cap-poten} that there exists a Radon measure \(\mu^K\) such that
\[
\mu^K(K) = \operatorname{cap}_{p_1, p_2}(K; \Omega).
\]
Such a measure is usually called a \emph{capacitary measure}. In fact, the quantity \(\mu^K(K)\) equals the integral over \(M\) of the \emph{nonlinear mixed-potential} \(\mathcal{G}_{p_1, p_2}^\Omega(\mu^K)\) with respect to \(\mu^K\) (see Theorem~\ref{thm-equiv-mu} below).

\begin{definition}\label{def-mix-poten}
Let \(M = M_1 \times M_2\) and \(p_1, p_2 \in (1,\infty)\). Suppose that $\Omega\subset M$ is an open set.
For any  nonnegative Radon measure $\mu$ on $M$,
and any $x=(x_1,x_2)\in M$ with $x_1\in M_1$ and $x_2\in M_2$, define
the \emph{nonlinear mixed-potential}
  \begin{align}\label{eq-GGf}
  {\mathcal G}_{p_1,\, p_2}^\Omega(\mu)(x):= \int_{M_2}  \int_{M_1}\|G^\Omega (\mu)(\cdot,\, y_2)\|_{L^{p_1'}(M_1)}^{p_2'-p_1'} G^\Omega(x;\, y)
  \left[G^\Omega (\mu)(y)\right]^{p_1'-1}\,dV_{1}(y_1)\,dV_{2}(y_2).
\end{align}
If $d\mu=f\,dV$ for some $f\in L_\loc^1(M)$,
then we  denote ${\mathcal G}_{p_1,\, p_2}^\Omega(\mu)$ by ${\mathcal G}_{p_1,\, p_2}^\Omega(f)$.
In the case $\Omega=M$, we omit the superscript and simply write ${\mathcal G}_{p_1,\, p_2}(\mu)$ or ${\mathcal G}_{p_1,\, p_2}(f)$.
\end{definition}

\begin{remark}
If \(\mu \equiv 0\), then we adopt the convention that
$ {\mathcal G}_{p_1,\, p_2}^\Omega(\mu)=0.$
If $G^\Omega (\mu)= \infty$ on a subset of $M$ with positive measure, then we set
$ {\mathcal G}_{p_1,\, p_2}^\Omega(\mu)=\infty.$
It may happen that \(G^\Omega(\mu) < \infty\) a.e. on \(M\), yet the set
\[
\left\{ y_2 \in M_2:\ \|G^\Omega(\mu)(\cdot, y_2)\|_{L^{p_1'}(M_1)} = \infty \right\}
\]
has positive measure in \(M_2\), in which case we define
$$ {\mathcal G}_{p_1,\, p_2}^\Omega(\mu)=\begin{cases}\infty\quad \, &\text{as}\ \, p_2<p_1;\\
G^\Omega([G^\Omega (\mu)]^{p_1'-1})(x)\quad\, &\text{as}\ \, p_2=p_1;\\
0\quad\, &\text{as}\  \, p_2>p_1.
\end{cases}$$
\end{remark}

\begin{remark}
Note that, if $p_1=p_2=p$, then
\begin{align*}
  {\mathcal G}_{p,\, p}(\mu)(x)
  &= \int_{M_2}  \int_{M_1} G^M(x;\, y)
  \left[G^M (\mu)(y)\right]^{p'-1}\,dV_{1}(y_1)\,dV_{2}(y_2)\\
  &  =G^M\left(\left[G^M (\mu)\right]^{p'-1}\right)(x),
\end{align*}
which coincides with the nonlinear potential used in Theorem \ref{thm-GPS-para}(ii).
Observe that the potential function
$$u={\mathcal G}_{p,\, p}(f)$$
solves the equation
\begin{align*}
\varDelta_M\left( |\varDelta_M u|^{p-2}  \varDelta_M u\right)=f.
\end{align*}
In the special case $p_1=p_2=2$, the function $G^M(G^Mf)$ is a solution to the bi-Laplace equation
$$\varDelta_M^2 u=f.$$
According to Proposition \ref{prop-PDE} below, for any $0\le f\in {\mathcal C}_c^\infty(M)$,
the following mixed equation
\begin{align*}
\varDelta_M\left( |\varDelta_M u|^{p_1-2}  \varDelta_M u \left\|\varDelta_M u(\cdot, x_2)\right\|_{L^{p_1}(M_1)}^{p_2-p_1}\right)=f
\end{align*}
has a solution that can be expressed using the nonlinear mixed-potential function
$$u={\mathcal G}_{p_1,\, p_2}^\Omega(f),$$
where $\Omega\Subset M$ is some precompact  open set.

\end{remark}

In contrast to Theorems~\ref{thm-G-para} and~\ref{thm-GPS-para}, the characterization of mixed-parabolicity and the mixed-Liouville property via the nonlinear mixed potential requires the following weak radial Harnack-type inequality.

\begin{definition}\label{def:weak-radial-Harnack}
We say that $M=M_1\times M_2$ satisfies the \emph{weak radial Harnack-type inequality} if for any $x= (x_1, x_2)$ with $x_1 \in M_1$ and $x_2 \in M_2$, and for any  $r \in (r_0,\, \infty)$ with  $r_0\in(0,\infty)$ a large fixed number, there exists a positive constant $C \in (1, \infty)$, which can depend on $x$ and $r$, such that for all $y = (y_1, y_2)$ satisfying $y_1 \notin B(x_1, 2r)$ and $y_2 \in B(x_2, r)$,
 \begin{equation}\label{eq-weak-radialharnack}
C^{-1} \, G^M(x;\, y_x) \le G^M(x;\,y) \le C \, G^M(x;\, y_x),
\end{equation}
where $y_x:=(y_1, x_2)\in M$.
\end{definition}

As will be seen in Section~\ref{ss5.1}, the Li--Yau estimate \(\SG\) on \(M\) implies the weak radial Harnack-type inequality \eqref{eq-weak-radialharnack}. Further examples of product manifolds \(M = M_1 \times M_2\) satisfying \eqref{eq-weak-radialharnack} can be easily obtained when one factor, say \(M_i\), is a connected sum of the form \(\mathcal{R}^n \mathbin{\#} \mathcal{R}^m\) and the other factor satisfies \(\SG\); see Example~\ref{eq-M-ends} for details.

We are now ready to state the main result of this paper.

\begin{theorem}\label{thm-main}
Let \(M = M_1 \times M_2\)
and \(p_1, p_2 \in (1,\infty)\). Then the following properties are equivalent:
\begin{enumerate}[\rm (i)]
  \item $M$ is $L^{p_2}(L^{p_1})$-parabolic.
 \item $G^M(x;\,\cdot)\notin L^{p_2'}(L^{p_1'})(M\setminus B(x,\,r))$  for some / all $x\in M$
  and some / all $r\in (0,\infty)$.
  \item $M$ admits the \emph{\(L^{p_2'}(L^{p_1'})\)-Liouville property}.
 \end{enumerate}
Further, if
$M$ satisfies the weak radial Harnack-type inequality \eqref{eq-weak-radialharnack},
then each of (i), (ii), and (iii) is  equivalent to the following:

\begin{enumerate}
  \item[\rm (iv)] ${\mathcal G}_{p_1,\, p_2}(f)(x)=\infty$ for some / all nonzero nonnegative $f\in {\mathcal C}_c^\infty(M)$ and some / all $x\in M$.

\end{enumerate}
\end{theorem}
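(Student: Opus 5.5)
We follow the scheme of Grigor'yan--Pessoa--Setti, replacing $L^p$ duality by the duality $\bigl(L^{p_2}(L^{p_1})(M)\bigr)^*=L^{p_2'}(L^{p_1'})(M)$ from Section~\ref{ss3.1}. The constructive input is the capacitary potential theory of Sections~\ref{sec-cap}--\ref{sec-mix-cap-poten}: the dual characterization of $\operatorname{cap}_{p_1,p_2}(K;\Omega)$ and the capacitary measure $\mu^K$ with $\mu^K(K)=\operatorname{cap}_{p_1,p_2}(K;\Omega)$. We take these as given. The order is (i)$\Leftrightarrow$(ii), then (ii)$\Leftrightarrow$(iii), then (iv) under \eqref{eq-weak-radialharnack}.

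\textbf{(i)$\Leftrightarrow$(ii).} The core identity is the dual formula
\[
\operatorname{cap}_{p_1,p_2}(K)^{1/p_2}=\sup\Bigl\{\mu(K)\,\|G^M\mu\|_{L^{p_2'}(L^{p_1'})(M)}^{-1}:\ \mu\in\cm^+(K),\ \mu\neq0\Bigr\}.
\]
It follows from minimax and mixed-norm duality: $\int G^Mf\,d\mu=\int f\,G^M\mu\,dV$ by symmetry of $G^M$.

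If (ii) fails, then $G^M(x;\cdot)\in L^{p_2'}(L^{p_1'})$ off $B(x,r)$. Take $\mu$ to be normalized volume on a small ball $B(x,\varepsilon)$, with $\varepsilon<r/2$. Local integrability of $G^M$ in mixed norm near the diagonal (away from the singularity, bounded) together with the local Harnack inequality show $G^M\mu\in L^{p_2'}(L^{p_1'})(M)$. Hence $\operatorname{cap}_{p_1,p_2}(\overline{B(x,\varepsilon)})>0$ and $M$ is not parabolic.

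Conversely, suppose $\operatorname{cap}(\overline E)>0$ for some $E$. Then there is $\mu$ on $\overline E$ with $\|G^M\mu\|<\infty$. Away from $\overline E$, $G^M\mu(y)\gtrsim G^M(x;y)$ by the Harnack inequality for the positive harmonic function $G^M(\cdot\,;y)$ on a neighborhood of $\overline E$. This gives membership in the mixed space off a large ball, so (ii) fails. The "some/all" variants come from the same Harnack comparison and from enlarging or shrinking $r$, since compact annuli contribute finitely.

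\textbf{(ii)$\Leftrightarrow$(iii).} If (ii) fails, $u=\min\{G^M(x;\cdot),1\}$ is a nonconstant positive superharmonic function. It lies in $L^{p_2'}(L^{p_1'})$, using the finite-volume neighborhood of $B(x,r)$ for the bounded part, so (iii) fails.

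Conversely, let $u\ge0$ be a nonconstant superharmonic function in $L^{p_2'}(L^{p_1'})$. By Riesz decomposition, $u\ge G^M\nu$ with $\nu=-\varDelta u\neq0$ (a harmonic nonnegative part would contradict nonconstancy only after the minorant argument, so we use $-\varDelta u\ne0$ or, if $u$ is harmonic, the comparison $u\ge c\,G^M(x;\cdot)$ off a ball obtained from the minimum principle on exhausting domains). Then $G^M\nu\ge c\,G^M(x;\cdot)$ off a ball, again by Harnack, so $G^M(x;\cdot)$ lies in the mixed space and (ii) fails.

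\textbf{(iv).} Write $F=G^Mf\asymp G^M(x;\cdot)$ off a large ball. Then
\[
\mathcal G_{p_1,p_2}(f)(z)\gtrsim\int\|F(\cdot,y_2)\|_{L^{p_1'}}^{p_2'-p_1'}F^{p_1'-1}G^M(z;y)\,dV\asymp\int\|F(\cdot,y_2)\|^{p_2'-p_1'}F^{p_1'}\,dV,
\]
at least in the region where $G^M(z;y)\asymp F(y)$, because $G^M(z;\cdot)$ and $F$ are comparable there. The last integral is exactly $\|F\|_{L^{p_2'}(L^{p_1'})}^{p_2'}$ restricted to that region.

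The main obstacle is that the comparison $G^M(z;y)\asymp F(y)$ must be uniform for fixed $z$ as $y\to\infty$. Harnack at infinity is not automatic, which is where \eqref{eq-weak-radialharnack} enters. We split $M\setminus B$ into the region $\{y_1\notin B(x_1,2r),\,y_2\in B(x_2,r)\}$, where the weak radial Harnack inequality lets us freeze $y_2=x_2$ and compare the inner norms slice by slice, and its complement, handled symmetrically or by the finite $r$-slab. This yields divergence of $\mathcal G$ iff divergence of the mixed norm of $G^M(x;\cdot)$. For the reverse bound when (ii) fails, we use $G^M(z;y)\le C_z\,F(y)$ off compacts via the maximum principle, which gives $\mathcal G<\infty$.
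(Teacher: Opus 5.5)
Your arguments for (i)$\Leftrightarrow$(ii) and (ii)$\Leftrightarrow$(iii) are essentially the paper's: the dual formula for $\operatorname{cap}_{p_1,p_2}$ plus Lemma~\ref{lem-EHIG}, the truncated Green function, and the minimum-principle comparison of Lemma~\ref{lem-compari}. That comparison covers every nonzero nonnegative superharmonic function, so the Riesz detour is unnecessary. You should still treat $G^M\equiv\infty$ separately, and choose the truncation level so that $\min\{a,G^M(x;\cdot)\}$ is genuinely nonconstant.

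The genuine gap is in (iv), in the direction ``(ii) fails $\Rightarrow$ $\mathcal G_{p_1,p_2}(f)(z)<\infty$''. You only control the integrand off a compact set. The real difficulty is the compact piece $B_1\times B_2$, with $B_i=B_i(z_i,r)$, $r>r_0$ and $\operatorname{supp}f\subset B(z,r)$, when $p_2<p_1$. There the integrand is
\[
\|F(\cdot,y_2)\|_{L^{p_1'}(M_1)}^{p_2'-p_1'}\,G^M(z;y)\,F(y)^{p_1'-1},\qquad p_2'-p_1'>0 .
\]
Here $F=G^Mf$ is bounded on $2B_1\times B_2$ and $G^M(z;\cdot)$ is locally integrable. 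But the weight depends on the whole $M_1$-tail $N(y_2):=\|G^M(z;\cdot,y_2)\mathbf 1_{M_1\setminus 2B_1}\|_{L^{p_1'}(M_1)}$. Finiteness of the mixed norm only gives $\int_{B_2}N^{p_2'}\,dV_2<\infty$. Meanwhile $y_2\mapsto\int_{B_1}G^M(z;y_1,y_2)\,dV_1(y_1)$ is in general unbounded as $y_2\to z_2$, so a H\"older argument in $y_2$ does not close, and you give no other.

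This is exactly where the paper uses \eqref{eq-weak-radialharnack}. It gives $G^M(z;y_1,y_2)\simeq G^M(z;y_1,z_2)$ for $y_1\notin 2B_1$, $y_2\in B_2$. Hence $N(y_2)\simeq N(z_2)$ uniformly on $B_2$, and $N(z_2)<\infty$ because $N<\infty$ a.e.\ there. The local term is then bounded by a constant times $(1+N(z_2))^{p_2'-p_1'}\,G^M\phi(z)<\infty$, where $0\le\phi\in\mathcal C_c^\infty(M)$ and $\phi\equiv1$ on $B_1\times B_2$.

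Relatedly, your diagnosis of where the hypothesis enters is wrong. The comparison $G^M(z;y)\asymp G^Mf(y)$, uniformly as $y\to\infty$, is automatic. It is Lemma~\ref{lem-EHIG}: Harnack for $G^M(\cdot\,;y)$ in the first variable on a fixed ball containing $z$ and $\operatorname{supp}f$, with constants independent of $y$.

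Nor is \eqref{eq-weak-radialharnack} needed on the far region $(M_1\setminus B_1)\times B_2$. There, split $\|F(\cdot,y_2)\|_{L^{p_1'}(M_1)}^{p_1'}$ into a bounded part over $2B_1$ plus a tail comparable to $N(y_2)^{p_1'}$. Elementary inequalities on the finite-measure set $B_2$ then reduce everything to $1+\int_{B_2}N^{p_2'}\,dV_2$, with no Harnack-at-infinity input.

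So you spend the hypothesis where it is superfluous and omit the local term where it is essential. As written, the implication from (iv) to (ii) is not proved.
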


The equivalence (i) $\Leftrightarrow$ (ii) $\Leftrightarrow$ (iii) is proved in Section \ref{s4} (see Theorem \ref{thm-para-Green}), while the equivalence of these conditions to (iv) is proved in Section \ref{s5} (see Theorem \ref{thm-G-poten}).

Applications of Theorem~\ref{thm-main} are given in Section~\ref{s6}. In particular, when \(\SG\) is imposed on \(M\), each condition in Theorem~\ref{thm-main} is equivalent to the following volume growth criterion (see Proposition~\ref{prop-para-vol}):
\begin{align}\label{eq-para-vol-mixed}
\int^\infty \left[ \int^\infty \left( \int_{r \vee s}^\infty \frac{t \, dt}{V_1(t) V_2(t)} \right)^{p_1'} V_1'(r) \, dr \right]^{\frac{p_2'}{p_1'}} V_2'(s) \, ds = \infty.
\end{align}
This mixed volume growth criterion extends the integral criterions in \eqref{eq-parabolic-iff} and \eqref{eq-Lppara-iff-vol}.

A remarkably different phenomenon emerges in the product setting. Consider, for example, the product space \(\mathbb{R}^{n_1} \times \mathbb{R}^{n_2}\). As an application of \eqref{eq-para-vol-mixed}, we obtain in Section~\ref{ss6.2} the following equivalences:
\begin{align}\label{eq1-Deff}
\mathbb R^{n_1} \times \mathbb R^{n_2}\text{ is } L^{p_2}(L^{p_1})\text{-parabolic}
&\quad\Leftrightarrow \quad
\mathbb R^{n_1} \times \mathbb R^{n_2} \text{ admits the } L^{q_2}(L^{q_1})\text{-Liouville property}\\
&\quad
\Leftrightarrow \quad
D_{\mathrm{eff}} := \frac{n_1}{p_1} + \frac{n_2}{p_2} \le 2,\notag
\end{align}
where $p_i = q_i'$ for $i = 1, 2$.
For comparison, we recall the classical isotropic criterion: for $p = q'$,
\begin{align}\label{eq0-Deff}
\mathbb R^N \text{ is } L^p\text{-parabolic}
\quad \Leftrightarrow \quad
\mathbb R^N \text{ admits the } L^q\text{-Liouville property}
\quad \Leftrightarrow \quad
\frac{N}{p} \le 2.
\end{align}
Note that the classical criterion \eqref{eq0-Deff} is governed entirely by the single parameter \(N/p\). In sharp contrast, the mixed criterion \eqref{eq1-Deff} involves \((n_1, p_1)\) and \((n_2, p_2)\) separately, through the anisotropic dimension
\[
D_{\mathrm{eff}} := \frac{n_1}{p_1} + \frac{n_2}{p_2}.
\]
Indeed, the classical criterion \eqref{eq0-Deff} is recovered as the special case \(n_1 = N\) and \(n_2 = 0\), for which \(D_{\mathrm{eff}} = N/p\). Thus, \eqref{eq1-Deff} serves as a genuine anisotropic analogue of \eqref{eq0-Deff}. This demonstrates that the mixed framework is naturally adapted to product structures  captures geometric features that are invisible from the standpoint of the usual isotropic theory.

\section{Riemannian product manifold and Green function}\label{s2}
In Section~\ref{ss2.1}, we state some basic facts about Riemannian product manifolds.
Then, in  Section~\ref{ss2.2}, we present various properties of Green functions that will be used later.

\subsection{Riemannian product manifold}\label{ss2.1}
Given two Riemannian manifolds \((M_1, { g}_1, d_1, V_1)\) and \((M_2, { g}_2, d_2, V_2)\), we consider their \emph{Riemannian product}
\[
M := M_1 \times M_2,
\]
endowed with the natural \emph{product metric} \({ g} = { g}_1 \oplus { g}_2\). This metric is defined by
\[
g(x_1,x_2)\big( (X_1, X_2), (Y_1, Y_2) \big) = g_1(x_1)(X_1, Y_1) + g_2(x_2)(X_2, Y_2),
\]
for all vectors \(X_1, Y_1 \in T_{x_1} M_1\) and \(X_2, Y_2 \in T_{x_2} M_2\), where \(T_{x_i} M_i\) denotes the tangent space at the point \(x_i \in M_i\).
The Riemannian distance on $M$ that is induced by \({ g}\), denoted by \(d(\cdot, \cdot)\), is given by
\begin{align}\label{eq-dg}
d\big((x_1, x_2), (y_1, y_2)\big) = \sqrt{ d_1(x_1, y_1)^2 + d_2(x_2, y_2)^2 }.
\end{align}
The Riemannian volume \(V\) is the product measure
\begin{align}\label{eq-Vg}
dV= dV_{1} \otimes dV_{2}.
\end{align}
If both \((M_1, { g}_1)\) and \((M_2, { g}_2)\) are noncompact and geodesically complete, then \((M, { g})\) is also noncompact and geodesically complete.
Moreover, closed balls in \((M, { g})\) are compact. In conclusion, for the Riemannian product \( M := M_1 \times M_2 \) with \(d\) and \(V\) as defined in \eqref{eq-dg} and \eqref{eq-Vg}, the triple \( (M, d, V)\) is a noncompact, complete metric measure space.
For a more detailed discussion of the Riemannian product manifold, the reader is referred to the monograph \cite[Section~3.8]{Grigoryan2009book}.

On the Riemannian product manifold \( M= M_1 \times M_2 \), we alternatively define for all \(x=(x_1, x_2)\in M\)
and \(y=(y_1, y_2)\in M\) that
\begin{align}\label{eq-dinfty}
d_\infty(x,y):=\max\left\{ d_{_1}(x_1, y_1),\, d_{_2}(x_2, y_2)\right\}.
\end{align}
This is usually known as the \emph{\(\infty\)-metric}.
Observe that
\begin{align}\label{eq-dg=dinfty}
d_\infty(x,y)\le d(x,y)\le {\sqrt 2}\, d_\infty(x,y).
\end{align}
Instead of using the Riemannian product metric \(d\) from \eqref{eq-dg}, it is sometimes more convenient
to use the \(\infty\)-metric, especially when a volume doubling condition is assumed for each \((M_i, d_i, V_i)\).

\begin{definition}\label{def-vd}
A Riemannian manifold \((M,d,V)\) is said to satisfy the \emph{volume doubling} condition, denoted by $(\hyperref[e:VD]{\mathrm{VD}})$, if there exists a constant \(C_D \ge 1\) such that, for every \(x \in M\) and \(r \in (0,\infty)\),
\begin{equation}\label{e:VD}
V(x, 2r) \le C_D \, V(x, r),
\end{equation}
\end{definition}

\begin{remark}
 Note that \eqref{e:VD} holds if and only if there exist constants $C_{D}'\in (1,\infty )$ and $\alpha_+ > 0$ such that, for all $x,\,y\in M$ and $0<r\leq R<\infty$,
\begin{align*}
  \frac{V(x,R)}{V(y,r)}\leq C_D' \left( \frac{d(x,y)+R}{r}\right) ^{\alpha_+ }.
\end{align*}
Another useful consequence of $\vd$ is that  for all $x,\,y\in M$ and $r\in(0,\infty)$,
\begin{equation}\label{eq:Vxy}
  V(x,r)+V(x,y)\simeq V(x, r+d(x,y)),
\end{equation}
where
\begin{align}\label{eq:defV}
 V(x,y):= V(B(x,d(x,y)))+ V(B(y,d(x,y))).
\end{align}
\end{remark}

\begin{proposition}\label{prop-vd}
For $i\in\{1,2\}$, assume that each manifold $(M_i, g_i, d_i, V_i)$ satisfies the volume doubling condition $\vd$.
Let \( M:= M_1 \times M_2 \) be the  Riemannian product manifold,  endowed with the product metric
\(g = g_1 \oplus g_2\), which induces \(d\) and \(V\) as in \eqref{eq-dg} and \eqref{eq-Vg}.
Then, the following  hold:

\begin{enumerate}[\rm (i)]
    \item For any $x=(x_1,x_2)\in M$ and $r\in(0,\infty)$, it holds that
  \begin{align}\label{eq1-Vprod}
V(x,r)\simeq \prod_{i=1}^2 V_{i}(x_i, r).
\end{align}
As a consequence, $(M, g, d, V)$ itself  satisfies the volume doubling condition $\vd$.

\item  For any $x=(x_1,x_2)\in M$, $y=(y_1,y_2)\in M$ and $r\in(0,\infty)$,
\begin{align}\label{eq2-Vprod}
V(x,r)+ V(x,y) \simeq V(x, r+d(x,y))
 \simeq V(x, r+d_\infty(x,y))
\end{align}
and
\begin{align}\label{eq3-Vprod}
V(x,y) \simeq \prod_{i=1}^2 V_{i}(x_i,\, d_\infty(x,y)),
\end{align}
where \(V(x, y)\) is defined as in \eqref{eq:defV}, and \(d_\infty\) is the \(\infty\)-metric defined in \eqref{eq-dinfty}.
\end{enumerate}
\end{proposition}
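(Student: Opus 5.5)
The proof reduces everything to product balls and the doubling property of each factor. The key observation is that product balls sandwich metric balls. Write $Q(x,r):=B_1(x_1,r)\times B_2(x_2,r)$ for the ball of radius $r$ in the $\infty$-metric. By \eqref{eq-dg=dinfty},
\[
Q(x,r/\sqrt2)\subset B(x,r)\subset Q(x,r).
\]
Taking volumes and using that $V$ is the product measure \eqref{eq-Vg}, we get
\[
\prod_{i=1}^2 V_i(x_i,r/\sqrt2)\le V(x,r)\le \prod_{i=1}^2 V_i(x_i,r).
\]
Since $r/\sqrt 2\ge r/2$, applying \eqref{e:VD} once on each $M_i$ gives $V_i(x_i,r)\le C_D V_i(x_i,r/\sqrt2)$. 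This proves \eqref{eq1-Vprod} with constants $1$ and $C_D^2$. Doubling for $M$ then follows: by \eqref{eq1-Vprod} and \eqref{e:VD} on each factor,
\[
V(x,2r)\lesssim \prod_i V_i(x_i,2r)\le C_D^2\prod_i V_i(x_i,r)\lesssim V(x,r).
\]

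For (ii), now that $M$ satisfies \vd, the first equivalence in \eqref{eq2-Vprod} is exactly \eqref{eq:Vxy} applied on $M$. The second equivalence, $V(x,r+d(x,y))\simeq V(x,r+d_\infty(x,y))$, holds because the two radii are comparable within a factor $\sqrt2$ by \eqref{eq-dg=dinfty}, and one doubling step absorbs that factor.

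For \eqref{eq3-Vprod}, set $\rho:=d(x,y)$ and $\rho_\infty:=d_\infty(x,y)$, so $\rho_\infty\le\rho\le\sqrt2\rho_\infty$. By definition $V(x,y)=V(x,\rho)+V(y,\rho)$. By part (i) and doubling,
\[
V(x,\rho)\simeq V(x,\rho_\infty)\simeq\prod_i V_i(x_i,\rho_\infty).
\]
It remains to show $V(y,\rho)\lesssim V(x,\rho)$ (the reverse bound is trivial, since $V(x,y)\ge V(x,\rho)$). Since $B(y,\rho)\subset B(x,2\rho)$, we have
\[
V(y,\rho)\le V(x,2\rho)\le C\,V(x,\rho).
\]
Hence $V(x,y)\simeq V(x,\rho)\simeq\prod_i V_i(x_i,\rho_\infty)$. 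Alternatively, one can argue factorwise, using $d_i(x_i,y_i)\le\rho_\infty$ and the standard consequence of \vd\ that $V_i(y_i,s)\simeq V_i(x_i,s)$ whenever $d_i(x_i,y_i)\le s$.

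The main obstacle is keeping the constants uniform in $x$, $y$ and $r$, and handling the distinction between $d$ and $d_\infty$ correctly. Every step above uses only finitely many applications of \eqref{e:VD}, either on $M$ or on the factors, so all constants depend only on $C_D$. The edge case $x=y$, where $\rho=0$ and both sides vanish, is trivial.
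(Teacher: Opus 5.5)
Your proof is correct and follows essentially the same route as the paper: you sandwich the metric ball between product balls via \eqref{eq-dg=dinfty}, apply one doubling step on each factor to get \eqref{eq1-Vprod}, and then use \eqref{eq:Vxy} on $M$ for part (ii). Your explicit check that $V(y,d(x,y))\lesssim V(x,d(x,y))$ in the proof of \eqref{eq3-Vprod} supplies a detail the paper leaves implicit.
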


\begin{proof}
 For any $x=(x_1,x_2)\in M$ and $r\in(0,\infty)$, by the equivalence \(d \simeq d_\infty\) in \eqref{eq-dg=dinfty}, we have
  \begin{align}\label{eq-inclu}
 B(x, r)\subset  B_{1}(x_1, r) \times B_{2}(x_2, r)\subset  B(x, \sqrt 2r).
\end{align}
It is obvious that the first inclusion in \eqref{eq-inclu} implies
  \begin{align*}
V(x,r)=V(B(x, r))\le V \Big(B_{1}(x_1, r) \times B_{2}(x_2, r)\Big)
= \prod_{i=1}^2 V_{i}(x_i, r).
\end{align*}
Using the second inclusion in \eqref{eq-inclu} and the fact that each $(M_i, g_i, d_i, V_i)$ satisfies $\vd$,
we obtain
 \begin{align*}
 \prod_{i=1}^2 V_{i}(x_i, r)
 &\le C_D^2 \prod_{i=1}^2 V_{i}\left(x_i, \frac r 2\right)\\
 & = C_D^2 V\left(B_{1}\left(x_1, \frac r 2\right) \times B_{2}\left(x_2, \frac r 2\right)\right)\\
 &
 \le C_D^2 V \left(B\left(x, \frac {\sqrt 2 r} 2\right)\right)\\
 &
 \le C_D^2 V\left(x, r\right).
\end{align*}
Combining the last two estimates yields \eqref{eq1-Vprod}.

Once we have obtained \eqref{eq1-Vprod}, the fact that each \((M_i, g_i, d_i, V_i)\) satisfies  $\vd$ immediately implies that the Riemannian product manifold \((M, g,d, V)\) itself satisfies $\vd$.

Having established that the Riemannian product manifold $(M, g, d, V)$ satisfies $\vd$, it follows from the general volume doubling consequence \eqref{eq:Vxy} together with \eqref{eq-dg=dinfty} that \eqref{eq2-Vprod} holds. Furthermore, \eqref{eq3-Vprod} follows directly from \eqref{eq1-Vprod} and \eqref{eq-dg=dinfty}.
\end{proof}

\subsection{Green functions on a general Riemannian manifold}\label{ss2.2}

We begin with the following basic facts of Green functions  that will be used in this paper (see  \cite[Theorem 13.17]{Grigoryan2009book} and \cite[Corollary 13.13]{Grigoryan2009book}).

\begin{lemma}[\cite{Grigoryan2009book}]\label{lem-G-property}
Suppose that $(M,\ g)$ is a noncompact geodesically complete Riemannian manifold.
Fix $x\in M$. The Green function $G^M(x;\,\cdot)$ is the infimum of all positive fundamental solutions of $\varDelta_M$ at $x$. In particular, \(G^M(x;\, \cdot)\)
is harmonic in \(M \setminus \{x\}\). Moreover, if  \(G^M(x;\, y_0) < \infty\) for some \(y_0 \in M\), then
  \(G^M(x;\, y) < \infty\) for all \(y\in M\) and $G^M(x;\, \cdot)\in {\mathcal C}^\infty(M\setminus\{x\})$.
\end{lemma}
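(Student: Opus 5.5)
The plan is to build $G^M$ through an exhaustion of $M$ by precompact domains and to obtain all the listed properties from the Dirichlet Green functions by monotone limits and Harnack's principle.

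\emph{Setup.} Choose precompact open sets $\Omega_1\Subset\Omega_2\Subset\cdots$ with smooth boundaries, each containing $x$, such that $\bigcup_k\Omega_k=M$. The Dirichlet heat kernels $p_t^{\Omega_k}$ increase in $k$ and converge pointwise to $p_t^M$. By monotone convergence in \eqref{eq:Green-kernel}, $G^{\Omega_k}(x;\cdot)\uparrow G^M(x;\cdot)$. For each $k$, the function $G^{\Omega_k}(x;\cdot)$ is a positive fundamental solution of $-\varDelta$ in $\Omega_k$ at $x$. It is smooth and harmonic in $\Omega_k\setminus\{x\}$ and vanishes on $\partial\Omega_k$; these are standard facts for precompact smooth domains.

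\emph{Finiteness dichotomy and regularity.} For $k\ge1$ set $h_k:=G^{\Omega_k}(x;\cdot)-G^{\Omega_1}(x;\cdot)$ on $\Omega_1$. The singularities at $x$ cancel, since $-\varDelta h_k=\delta_x-\delta_x=0$ in $\mathcal D'(\Omega_1)$. By Weyl's lemma, $h_k$ is a nonnegative harmonic function on all of $\Omega_1$, including at $x$. Similarly, each $G^{\Omega_k}(x;\cdot)-G^{\Omega_j}(x;\cdot)$ with $k\ge j$ is harmonic and nonnegative on $\Omega_j$. So on every $\Omega_j$ we have an increasing sequence of harmonic functions plus the fixed function $G^{\Omega_j}(x;\cdot)$.

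Harnack's convergence principle applies on the connected set $\Omega_j$, or we chain Harnack inequalities along the connected manifold $M$. It says the limit is either $\equiv\infty$ or converges locally uniformly to a harmonic function. Hence if $G^M(x;y_0)<\infty$ for one $y_0$, then $G^M(x;\cdot)<\infty$ everywhere (with value $\infty$ only at $x$ itself). In that case $G^M(x;\cdot)-G^{\Omega_1}(x;\cdot)$ is harmonic on $\Omega_1$, so $G^M(x;\cdot)\in\mathcal C^\infty(M\setminus\{x\})$ and is harmonic there. Moreover $-\varDelta G^M(x;\cdot)=\delta_x$ in $\mathcal D'(M)$, so $G^M(x;\cdot)$ is itself a positive fundamental solution.

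\emph{Minimality.} Let $u>0$ be any fundamental solution at $x$. Then $u-G^{\Omega_k}(x;\cdot)$ is harmonic in $\Omega_k$, again because the $\delta_x$ terms cancel and the singularity is removable by Weyl's lemma. Its $\liminf$ at $\partial\Omega_k$ is $\ge0$, since $u>0$ and $G^{\Omega_k}$ vanishes on the boundary. The minimum principle gives $u\ge G^{\Omega_k}(x;\cdot)$, and letting $k\to\infty$ yields $u\ge G^M(x;\cdot)$.

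Together with the previous step, $G^M(x;\cdot)$ is the infimum, and in fact the minimum, of all positive fundamental solutions whenever it is finite. If $G^M(x;\cdot)\equiv\infty$, no positive fundamental solution can exist, since any such $u$ would dominate $G^M(x;\cdot)$. The infimum over the empty family is then $+\infty$, so the statement still holds.

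\emph{Main obstacle.} The delicate point is the behaviour at the pole $x$. One must justify that the differences of fundamental solutions are genuinely harmonic across $x$, which is the removable-singularity step via the distributional identity and Weyl's lemma. This is exactly what lets Harnack's principle run on the whole connected $\Omega_j$ and lets the maximum principle run on the whole domain $\Omega_k$. I would handle it as above, by always subtracting a fixed Dirichlet Green function rather than working on $M\setminus\{x\}$ directly.
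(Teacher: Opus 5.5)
The paper gives no proof of its own here; it cites Theorem~13.17 and Corollary~13.13 of Grigor'yan's monograph. Your argument is correct and is essentially the standard proof behind those results: a smooth precompact exhaustion, removing the pole by subtracting a fixed Dirichlet Green function, Harnack's convergence principle for the finiteness dichotomy, and the minimum principle against $G^{\Omega_k}(x;\cdot)$ for minimality. The two caveats you built in are genuinely needed: the dichotomy requires $M$ connected, with a connected exhaustion, since on a disconnected $M$ the function $G^M(x;\cdot)$ vanishes off the component of $x$ while it may be $\equiv\infty$ on that component; and ``$G^M(x;y)<\infty$ for all $y$'' must be read as $y\neq x$.
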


Let \((M,  g)\) be a general noncompact, geodesically complete Riemannian manifold (not necessarily a product manifold).
For any open subset \(\Omega \subset M\), let \(W^{1,2}_0(\Omega)\) denote the closure of \(\mathcal{C}_c^\infty(\Omega)\) in the Sobolev space \(W^{1,2}(M)\).
Define the Dirichlet Laplace operator
\[
\mathcal{L}^\Omega = -\varDelta_M \big|_{W^{1,2}_0(\Omega)}.
\]
With $\mathcal{L}^\Omega$, we associate the heat kernel \(\{p_t^\Omega\}_{t\in(0,\infty)}\) and the Green function \(G^\Omega(x, y)\). Denote by $\lambda_{\rm min}(\Omega)$ the \emph{infimum spectrum} of $\mathcal L^\Omega$, that is,
$$
\lambda_{\rm min}(\Omega)=\inf_{0\nequiv f\in {\mathcal C}_c^\infty(\Omega)} \frac{\|\nabla f\|_{L^2(\Omega)}^2}{\|f\|_{L^2(\Omega)}^2}.
$$
If, in addition, $\Omega\Subset  M$ is a precompact open set, then by \cite[Theorem~10.22]{Grigoryan2009book}, it always holds that
$$\lambda_{\rm min}(\Omega)>0.$$
When $\lambda_{\rm min}(\Omega)>0$, it follows from \cite[Theorem~13.4]{Grigoryan2009book} that the Green operator \(G^\Omega\) is a bounded self-adjoint operator on \(L^2(\Omega)\) and satisfies
\[
G^\Omega = (\mathcal L^\Omega)^{-1} \quad \text{in } L^2(\Omega).
\]
In this case, the Green function \(G^\Omega(x;\, y)\) is  finite and, for any \(x \in \Omega\), the function \(G^\Omega(x;\, \cdot)\) satisfies
\begin{align}\label{eq-GOmega}
\mathcal L^\Omega G^\Omega(x; \,\cdot) = \delta_x(\cdot),
\end{align}
where \(\delta_x\) denotes the Dirac measure at \(x\).
In particular, \(G^\Omega(x;\, \cdot)\) is harmonic on \(\Omega \setminus \{x\}\).

In general, assume only that $\Omega \subset M$ is open (not necessarily precompact).
Then, for any \(\lambda \in (0, \infty)\), define the \emph{resolvent} \[G_\lambda^\Omega = (\lambda + \mathcal{L}^\Omega)^{-1}.\] This is a bounded, nonnegative definite, self-adjoint operator on \(L^2(\Omega)\) with operator norm at most \(\lambda^{-1}\) (see \cite[Theorem~4.5]{Grigoryan2009book}). Moreover, for all \(f \in L^2(\Omega)\) and \(x \in \Omega\), we have (see \cite[Lemma~5.10]{Grigoryan2009book})
\[
G_\lambda^\Omega f(x) = \int_0^\infty e^{-\lambda t} P_t^\Omega f(x) \, dt,
\]
where \(\{P_t^\Omega\}_{t\in(0,\infty)}\) with \(P_t^\Omega = e^{-t\mathcal{L}^\Omega}\) is the heat semigroup associated with \(\mathcal{L}^\Omega\). From this representation and \eqref{eq:Green-kernel}, it follows that as \(\lambda \downarrow 0\), \(G_\lambda^\Omega f(x)\) increases monotonically to \(G^\Omega f(x)\).

\begin{lemma}\label{lem-Gf-continue}
Let \(\Omega\subset M\) be an open set. For any \(\lambda > 0\) and any \(f\in L^\infty(M)\) with compact support contained in \(\Omega\), the function \(G_\lambda^\Omega f\) belongs to \(\mathcal{C}(\Omega)\).
\end{lemma}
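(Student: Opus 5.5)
The claim is local: continuity of $G_\lambda^\Omega f$ at each point of $\Omega$. The plan is to split the time integral $\int_0^\infty e^{-\lambda t}P_t^\Omega f\,dt$ into a short-time piece, which is uniformly small, and a long-time piece, which is continuous.

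\textbf{Setup.} Write
\[
G_\lambda^\Omega f(x)=\int_0^\infty e^{-\lambda t}P_t^\Omega f(x)\,dt ,\qquad x\in\Omega .
\]
Since $f\in L^\infty$ with compact support $K\subset\Omega$, we have $f\in L^2(\Omega)\cap L^1(\Omega)$. The Dirichlet heat kernel $p_t^\Omega(x,y)$ is smooth in $(t,x,y)\in(0,\infty)\times\Omega\times\Omega$ (see \cite[Theorem~7.20]{Grigoryan2009book}). It is also sub-Markovian, $\int_\Omega p_t^\Omega(x,y)\,dV(y)\le 1$. Hence $|P_t^\Omega f(x)|\le \|f\|_{L^\infty}$ for all $t$ and $x$.

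\textbf{Splitting.} Fix $\varepsilon\in(0,1)$ and write $G_\lambda^\Omega f=I_\varepsilon+J_\varepsilon$, where
\[
I_\varepsilon(x)=\int_0^\varepsilon e^{-\lambda t}P_t^\Omega f(x)\,dt,\qquad J_\varepsilon(x)=\int_\varepsilon^\infty e^{-\lambda t}P_t^\Omega f(x)\,dt .
\]
By the sub-Markov bound, $\sup_{x\in\Omega}|I_\varepsilon(x)|\le \varepsilon\|f\|_{L^\infty}$. So $I_\varepsilon\to0$ uniformly on $\Omega$ as $\varepsilon\to0$. It therefore suffices to show that each $J_\varepsilon$ is continuous on $\Omega$; then $G_\lambda^\Omega f$ is a uniform limit of continuous functions and is continuous.

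\textbf{Continuity of $J_\varepsilon$.} Use the semigroup property $P_t^\Omega f=P_{\varepsilon/2}^\Omega\bigl(P_{t-\varepsilon/2}^\Omega f\bigr)$ for $t\ge\varepsilon$. Then
\[
J_\varepsilon(x)=\int_\Omega p_{\varepsilon/2}^\Omega(x,z)\,h(z)\,dV(z),\qquad h(z):=\int_{\varepsilon}^\infty e^{-\lambda t}P^\Omega_{t-\varepsilon/2}f(z)\,dt ,
\]
where interchanging the integrals is justified by Fubini, since everything is bounded by $\|f\|_{L^\infty}\,e^{-\lambda t}$. We have $|h|\le \|f\|_{L^\infty}/\lambda$, so $h$ is bounded. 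It remains to show that $x\mapsto\int_\Omega p_{s}^\Omega(x,z)h(z)\,dV(z)$ is continuous for fixed $s=\varepsilon/2>0$ and bounded $h$. The preferred route is to show that $x\mapsto p_s^\Omega(x,\cdot)$ is continuous into $L^1(\Omega)$. Pointwise convergence $p_s^\Omega(x_n,z)\to p_s^\Omega(x,z)$ comes from smoothness. For the $L^1$ convergence, use that $\int_\Omega p_s^\Omega(x,z)\,dV(z)=P_s^\Omega1(x)$, which is continuous in $x$ (it is a bounded solution of the heat equation in $\Omega$, hence smooth by local parabolic regularity). Scheffé's lemma then gives convergence of $p_s^\Omega(x_n,\cdot)$ to $p_s^\Omega(x,\cdot)$ in $L^1$. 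Consequently $J_\varepsilon(x_n)\to J_\varepsilon(x)$.

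\textbf{Main obstacle.} The delicate step is this last one: passing from pointwise smoothness of the kernel to continuity of the integral against a merely bounded function on a possibly noncompact $\Omega$. The plan relies on continuity of $P_s^\Omega1$ plus Scheffé's lemma. An alternative, if needed, is to use the local smoothness of $u(t,x)=P_t^\Omega h(x)$ as a bounded weak solution of the heat equation on $(0,\infty)\times\Omega$, which directly yields continuity of $u(s,\cdot)$.
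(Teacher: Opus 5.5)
Your argument is correct, but it takes a longer route than the paper's proof.

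\textbf{The paper's route.} The paper uses the compact support of $f$ directly. For a ball $B\Subset\Omega$ and fixed $t>0$, joint continuity of $p_t^\Omega$ on the compact set $\overline{B}\times K\subset\Omega\times\Omega$ bounds the kernel there. Dominated convergence in $y$ then gives $P_t^\Omega f\in\mathcal{C}(\Omega)$ for every $t>0$. The bound $|P_t^\Omega f|\le\|f\|_{L^\infty(M)}$ and dominated convergence in $t$, with dominating function $e^{-\lambda t}\|f\|_{L^\infty(M)}$ on all of $(0,\infty)$, finish the proof.

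\textbf{Where your route differs.} Your $I_\varepsilon/J_\varepsilon$ splitting is unnecessary: the same uniform bound already controls $t$ near $0$ inside dominated convergence. Your ``main obstacle'' is also self-created. Writing $J_\varepsilon=P_{\varepsilon/2}^\Omega h$ replaces the compactly supported $f$ by a merely bounded $h$. That forces you to import two extra ingredients:
\begin{enumerate}
\item continuity of $P_s^\Omega 1$, a regularity statement for the semigroup on bounded, non-compactly supported data, typically proved via local parabolic regularity and an exhaustion (this is the fact the paper quotes in the proof of Lemma~\ref{lem-compari});
\item Scheff\'e's lemma.
\end{enumerate}

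\textbf{What each approach buys.} Your route buys generality. It never really uses the compact support of $f$, so it shows continuity of $G_\lambda^\Omega f$ for every bounded $f$. The paper's route buys economy. It needs only joint continuity of the Dirichlet heat kernel, which suffices for the lemma as stated and for its uses in Corollary~\ref{cor-Gf-lowersemi} and Theorem~\ref{thm-approx-cap}.

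Finally, if you do accept continuity of $P_s^\Omega g$ for bounded $g$, you can apply it to $f$ itself and drop both the splitting and Scheff\'e's lemma.
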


\begin{proof}
Assume without loss of generality that \(K = \operatorname{supp} f\). Then \(K \Subset  \Omega\).
By \cite[Theorem 7.20]{Grigoryan2009book}, for every \(t > 0\), the heat kernel \(p_t^{\Omega}\) is jointly continuous in \((t, x, y) \in (0, \infty) \times \Omega \times \Omega\). Consequently, for any precompact open ball \(B \Subset  \Omega\) and any \(t > 0\), there exists a constant \(C = C_{t, B, K}\) such that
\[
\sup_{x \in B, \, y \in K} p_t^{\Omega}(x, y) \le C.
\]
Using this uniform bound together with the boundedness of \(f\), we may apply the dominated convergence theorem to the representation
\[
P_t^{\Omega} f(x) = \int_M p_t^{\Omega}(x, y) f(y) \, dV(y),
\]
and conclude that \(P_t^{\Omega} f\) is continuous on \(B\). Since \(B\) is an arbitrary precompact ball in \(\Omega\), it follows that \(P_t^{\Omega} f\) is continuous on all of \(\Omega\).

Moreover, we have the estimate \(0\le P_t^{\Omega} f(x) \le \|f\|_{L^\infty(M)}\) for all \((x, t) \in \Omega \times (0, \infty)\).
Combining this with the continuity of \(P_t^{\Omega} f\) on \(\Omega\) and the resolvent representation
\[
G_\lambda^{\Omega} f(x) = \int_0^\infty e^{-\lambda t} P_t^{\Omega} f(x) \, dt,
\]
the dominated convergence theorem once again implies that \(G_\lambda^{\Omega} f\) is continuous on \(\Omega\). This completes the proof.
\end{proof}

Note that in Lemma \ref{lem-Gf-continue}, the open set \(\Omega \subset M\) can be taken to be \(M\) itself. Usually, one can consider \(G^\Omega f\) for precompact open sets with smooth boundary and then transfer properties of \(G^{\Omega}\) to \(G^M\) via a standard exhaustion argument.

By an \emph{exhaustion sequence} \(\{\Omega_k\}_{k \in \mathbb N}\) of \(M\), we mean that each \(\Omega_k\) is a precompact open subset of \(M\) with smooth boundary, \(\Omega_k \Subset  \Omega_{k+1}\) for all \(k \in \mathbb N\), and \(M = \bigcup_{k \in \mathbb N} \Omega_k\).
For any \(k \in \mathbb N\) and \(x, y \in M\), the monotonicity \(p_t^{\Omega_k}(x, y) \le p_t^{\Omega_{k+1}}(x, y)\) for all \(t > 0\) implies
\[
G^{\Omega_k}(x;\, y) \le G^{\Omega_{k+1}}(x;\, y).
\]
Moreover, by \cite{LiTam1987AJM} (see also \cite[p.~131]{Grigoryan1999BAMS}), for all \(x, y \in M\) we have
\begin{equation}\label{eq-GkG}
G^M(x;\, y) = \lim_{k \to \infty} G^{\Omega_k}(x;\, y).
\end{equation}
A combination of the exhaustion argument and Lemma \ref{lem-Gf-continue} yields the following conclusion.

\begin{corollary}\label{cor-Gf-lowersemi}
Let \(\Omega\subset M\) be an open set. For any nonnegative function \(f\) and $t\in(0,\infty)$, the set \(\{x\in M:\ G^\Omega f(x)>t\}\) is open and contained in  $\Omega$.
\end{corollary}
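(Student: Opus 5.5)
We need to prove Corollary~\ref{cor-Gf-lowersemi}: for nonnegative $f$ and $t>0$, the set $\{G^\Omega f>t\}$ is open and contained in $\Omega$. The strategy is to write $G^\Omega f$ as a monotone limit of continuous functions. A monotone increasing limit of continuous functions is lower semicontinuous, so its strict superlevel sets are open.

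\textbf{Step 1: containment.} By convention $G^\Omega(x;\,y)=0$ whenever $x\notin\Omega$, since the Dirichlet heat kernel $p_t^\Omega$ vanishes outside $\Omega$. Hence $G^\Omega f=0$ on $M\setminus\Omega$, and $\{G^\Omega f>t\}\subset\Omega$ for every $t>0$. It therefore suffices to prove openness of this set as a subset of $\Omega$, which is open in $M$.

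\textbf{Step 2: truncation.} Fix an exhaustion sequence $\{\Omega_k\}_{k\in\mathbb N}$ of $M$. Set
\[
f_k:=\min\{f,k\}\,\mathbf 1_{\Omega_k\cap U_k},
\]
where $\{U_k\}$ is an increasing family of open sets with $U_k\Subset\Omega$ and $\bigcup_k U_k=\Omega$. Each $f_k$ is bounded and has compact support contained in $\Omega$, and $f_k\uparrow f\mathbf 1_\Omega$ pointwise. Only the values of $f$ on $\Omega$ affect $G^\Omega f$, so $G^\Omega(f\mathbf 1_\Omega)=G^\Omega f$.

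\textbf{Step 3: continuity.} Take $\lambda_k=1/k\downarrow0$. By Lemma~\ref{lem-Gf-continue}, each $u_k:=G^\Omega_{\lambda_k}f_k$ is continuous on $\Omega$.

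\textbf{Step 4: monotone convergence.} The sequence $u_k$ is increasing in $k$: the weight $e^{-\lambda_k s}$ increases as $\lambda_k$ decreases, and $f_k$ increases, while the kernel $p_s^\Omega$ is nonnegative. Writing
\[
u_k(x)=\int_0^\infty\!\int_\Omega e^{-\lambda_k s}\,p_s^\Omega(x,y)\,f_k(y)\,dV(y)\,ds,
\]
the monotone convergence theorem gives
\[
u_k(x)\uparrow\int_0^\infty\!\int_\Omega p_s^\Omega(x,y)\,f(y)\,dV(y)\,ds=G^\Omega f(x),
\]
with $+\infty$ allowed as a value.

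\textbf{Step 5: conclusion.} Since $u_k\uparrow G^\Omega f$,
\[
\{x\in\Omega:\ G^\Omega f(x)>t\}=\bigcup_k\{x\in\Omega:\ u_k(x)>t\}.
\]
Each set on the right is open in $\Omega$ by continuity of $u_k$, hence open in $M$. A union of open sets is open, so $\{G^\Omega f>t\}$ is open.

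The main obstacle is justifying the double monotone limit, in $\lambda$ and in the truncation, simultaneously. Using nonnegativity of the integrand together with a diagonal choice of parameters handles this through a single application of the monotone convergence theorem. If $\Omega$ itself is to be exhausted instead, one can alternatively use $G^{\Omega\cap\Omega_k}\uparrow G^\Omega$, the analogue of \eqref{eq-GkG}, but the resolvent route above avoids needing that fact.
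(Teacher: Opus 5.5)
Your proposal is correct and follows essentially the same route as the paper: localize and truncate $f$ to bounded, compactly supported $f_k\uparrow f$ on $\Omega$, note that $G^\Omega_{\lambda_k}f_k$ is continuous on $\Omega$ by Lemma~\ref{lem-Gf-continue}, and use $G^\Omega_{\lambda_k}f_k\uparrow G^\Omega f$. The remaining differences are cosmetic. You write the superlevel set as $\bigcup_k\{G^\Omega_{\lambda_k}f_k>t\}$, whereas the paper argues with a neighbourhood of each point. You localize via $\Omega_k\cap U_k$, whereas the paper uses an exhaustion of $\Omega$ directly.
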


\begin{proof}
If \(x \notin \Omega\), then \(G^\Omega f(x) = 0\), which implies that
$
\{x \in M :\ G^\Omega f(x) > t\} \subset \Omega.
$
Thus, it suffices to show that the set
$
 \{x \in \Omega :\ G^\Omega f(x) > t\}
$
is open.
To this end, take a point \(x_0 \in \Omega\) such that $G^\Omega f(x_0) > t$, we need to find some \(\delta > 0\) such that \(G^\Omega f(x) > t\) for all \(x \in B(x_0, \delta)\).

Let \(\{\Omega_k\}_{k \in \mathbb N}\) be an exhaustion sequence  of \(\Omega\).
 For each \(k \in \mathbb{N}\), define
\[
f_k := \min\{k, f\} \cdot \mathbf{1}_{\Omega_k},
\]
which is a bounded function with compact support in $\Omega$. Note that the sequence \(\{f_k\}_{k \in \mathbb{N}}\) increases pointwise to \(f\) on \(M\).
Let \(\{\lambda_k\}_{k \in \mathbb{N}}\) be a positive sequence decreasing to \(0\). Then
\[
G_{\lambda_k}^\Omega f_k \nearrow G^\Omega f \quad \text{pointwise on }\, \Omega.
\]
In particular, we have
\[
\lim_{k \to \infty} G_{\lambda_k}^\Omega f_k(x_0) = G^\Omega f(x_0) > t,
\]
which implies that there exists some \(k_0 \in \mathbb{N}\) such that
\[
G_{\lambda_{k_0}}^\Omega f_{k_0}(x_0) > t.
\]
By Lemma \ref{lem-Gf-continue}, the function \(G_{\lambda_{k_0}}^\Omega f_{k_0}\) is continuous on \(\Omega\) and, hence, it is continuous at \(x_0\).
Thus, there exists \(\delta > 0\) such that for all \(x \in B(x_0, \delta)\),
\[
G_{\lambda_{k_0}}^\Omega f_{k_0}(x) > t.
\]
Finally, since \(G^\Omega f \ge G_{\lambda_{k_0}}^\Omega f_{k_0}\) pointwise, we obtain that for all \(x \in B(x_0, \delta)\),
\[
G^\Omega f(x) \ge G_{\lambda_{k_0}}^\Omega f_{k_0}(x) > t.
\]
This shows that the set \(\{x\in M:\ G^\Omega f(x)>t\}\) is open, thereby completing the proof.
\end{proof}


At the end of this subsection, we present the following lemma concerning the continuity of \(G^\Omega f\) for \(f \in \mathcal{C}^\infty(\Omega)\), which is taken from
\cite[Lemma~3.4]{FarajiGrigoryan2019RMI}. 

\begin{lemma}[\cite{FarajiGrigoryan2019RMI}]\label{lem-FG3.4}
Let \(\Omega\subset M\) be an open set.
If \(0\le f \in C^\infty(\Omega)\) such that $G^\Omega f(x)<\infty$ for some $x\in \Omega$, then
 \(G^\Omega f \in {\mathcal C}^\infty(\Omega)\) and $\cl^\Omega (G^\Omega f ) = f$ pointwise.
\end{lemma}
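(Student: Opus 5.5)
We approximate $G^\Omega f$ by Green potentials on an exhaustion of $\Omega$, subtract a fixed local solution of $-\varDelta_M w=f$, and pass to the limit with Harnack's convergence principle for increasing sequences of harmonic functions. We assume $\Omega$ is connected; otherwise we work in the component containing $x$, since finiteness only propagates within a component.

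\emph{Approximation.} Fix an exhaustion sequence $\{\Omega_k\}_{k\in\mathbb N}$ of $\Omega$ by precompact open sets with smooth boundary, and set $u_k:=G^{\Omega_k}(f\mathbf 1_{\Omega_k})$. Since $\overline{\Omega_k}\subset\Omega$, the function $f$ is smooth and bounded on $\overline{\Omega_k}$, and $\lambda_{\min}(\Omega_k)>0$. Hence $u_k=(\cl^{\Omega_k})^{-1}(f\mathbf 1_{\Omega_k})$ is a weak solution of $-\varDelta_M u_k=f$ in $\Omega_k$, and interior elliptic regularity gives $u_k\in\mathcal C^\infty(\Omega_k)$. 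The kernels $G^{\Omega_k}$ increase in $k$, and $f\ge0$. By the analogue of \eqref{eq-GkG} for $\Omega$ (the exhaustion $\Omega_k\uparrow\Omega$) and monotone convergence, $u_k\nearrow u:=G^\Omega f$ pointwise in $\Omega$.

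\emph{Local reduction to harmonic functions.} Fix a ball $B\Subset\Omega$ and solve the Dirichlet problem $-\varDelta_M w=f$ in a slightly larger ball $B'\Subset\Omega$. For example, take $w=G^{B'}f$; it is smooth and bounded on $B$. For $k$ large we have $B'\subset\Omega_k$, so $h_k:=u_k-w$ is harmonic in $B'$, increasing in $k$, and bounded below on $B$ by $-\sup_B w$. Harnack's principle applied to $h_k+\sup_B w\ge0$ then gives a dichotomy on $B$: either $h_k\to\infty$ locally uniformly, or $h_k$ converges locally uniformly to a harmonic function $h$.

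\emph{Propagating finiteness.} Let $A$ be the set of points of $\Omega$ having a neighbourhood on which $u<\infty$ and the convergence above is locally uniform. By the dichotomy, both $A$ and $\Omega\setminus A$ are open. The point $x$ lies in $A$ because $u(x)<\infty$, so connectedness gives $A=\Omega$. On each ball we then have $u=w+h$ with $h$ harmonic, so $u\in\mathcal C^\infty(\Omega)$ and $\cl^\Omega u=-\varDelta_M u=-\varDelta_M w=f$ pointwise. Equivalently, the identity can be obtained by passing to the limit in $\int u_k\,\varDelta_M\phi=-\int f\phi$ for test functions $\phi$ and then using smoothness.

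The main obstacle is the Harnack convergence step, which needs a uniform lower bound to apply the Harnack inequality to the increasing harmonic functions $h_k$. Subtracting the fixed local solution $w$, which is bounded on $B$, supplies exactly this lower bound, so the standard elliptic Harnack inequality on small balls of the smooth manifold suffices.
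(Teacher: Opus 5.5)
The paper does not prove this lemma. It quotes it from Faraji--Grigor'yan \cite{FarajiGrigoryan2019RMI} and uses it as a black box (e.g.\ in Remark~\ref{rem-capK-finite}), so there is no internal argument to compare yours with.

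Your proof is correct and self-contained. It combines four ingredients:
\begin{itemize}
\item the monotone approximation $G^{\Omega_k}f\nearrow G^\Omega f$;
\item subtraction of a fixed local potential $w=G^{B'}f$, so that $h_k=u_k-w$ is an increasing sequence of harmonic functions on each small ball;
\item Harnack's convergence principle;
\item the open--closed argument on a connected $\Omega$.
\end{itemize}
Together these give $G^\Omega f\in\mathcal C^\infty(\Omega)$ and $-\varDelta_M G^\Omega f=f$ classically. That is the right reading of ``$\mathcal L^\Omega(G^\Omega f)=f$ pointwise'', since $G^\Omega f$ need not lie in the domain of the Dirichlet Laplacian.

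Your restriction to the component containing $x$ is genuinely necessary, not a technicality. $G^\Omega$ vanishes across components, and $f\in\mathcal C^\infty(\Omega)$ has no growth restriction. So if $\Omega$ is, say, a disjoint union of two balls, one can make $f$ blow up near the boundary of one ball fast enough that $G^\Omega f\equiv\infty$ there, while $G^\Omega f(x)<\infty$ in the other ball. The statement as written therefore needs $\Omega$ connected, which is what you prove.

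Two details should be made explicit.

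First, the conclusion concerns the pointwise-defined integral $G^\Omega f$. You therefore need $u_k$ and $w$, which are defined as integrals, to coincide \emph{everywhere} with the smooth weak solutions given by elliptic regularity, not merely a.e. This follows from continuity of $G^{U}g$ on $U$ for precompact open $U$ and bounded $g$. That continuity is proved as in Lemma~\ref{lem-Gf-continue} with $\lambda=0$, using that $\sup_{z,y\in U}p_t^{U}(z,y)\lesssim e^{-\lambda_{\min}(U)t}$ for $t\ge1$.

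Second, your closing remark misplaces the role of $w$. Harnack's principle needs no lower bound: apply the Harnack inequality to the nonnegative harmonic differences $h_k-h_j$ ($k\ge j$), which gives both the dichotomy and the locally uniform convergence. What $w$ actually does is make the $u_k$ harmonic on $B$. In fact $w=u_{k_0}$ would do, since $u_k-u_{k_0}\ge0$ is harmonic in $\Omega_{k_0}$ for $k\ge k_0$.
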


\section{A capacity theory for mixed-norm Lebesgue spaces}\label{s3}

Sections \ref{ss3.1}--\ref{sec-cap}--\ref{sec-cap-equiv} focus on some basic theory of mixed Lebesgue spaces and mixed capacities (where the endpoint cases $p_1, p_2 = 1$ are included as usual), while Section \ref{sec-mix-cap-poten} treats the nonlinear mixed-potential $\mathcal{G}_{p_1, p_2}^\Omega(f)$ for $p_1, p_2 \in (1, \infty)$.

\subsection{Basic properties of mixed-norm Lebesgue spaces}\label{ss3.1}

We begin with the following duality lemma that is essentially due to Benedek and Panzone \cite{BenedekPanzone1961Duke}.

\begin{lemma}\label{lem-duality}
Let $M = M_1 \times M_2$ be the Riemannian product manifold, equipped with the  Riemannian product distance $d$ and the Riemannian product volume $V$, as given by  \eqref{eq-dg} and \eqref{eq-Vg}.

 \begin{enumerate}
   \item[\rm (i)] For any $p_1,\ p_2\in[1,\infty)$ and any $V$-measurable function $f$ on $M$,
   \begin{align}\label{eq-norm}
  \|f\|_{L^{p_2}(L^{p_1})(M)}
  &=\sup\left\{\left|\int_M fg\,dV \right|:\  \|g\|_{L^{p_2'}(L^{p_1'})(M)}\le 1\right\}.
 \end{align}

  \item[\rm (ii)]   For any $p_1,\ p_2\in(1,\fz)$, any $0\not\equiv f\in L^{p_2}(L^{p_1})(M)$ and any $0\not\equiv g\in L^{p_2'}(L^{p_1'})(M)$,
       \begin{align}\label{eq-Holder}
  \int_M fg\,dV \le \|f\|_{L^{p_2}(L^{p_1})(M)}\|g\|_{L^{p_2'}(L^{p_1'})(M)},
 \end{align}
 with equality holds if and only if for almost all $x_1\in M_1$ and $x_2\in M_2$,
 \begin{align}\label{eq-Holder=}
 g(x_1,x_2) =\lambda\,\mathrm{sign} f(x_1,x_2)\, |f(x_1,x_2)|^{p_1-1} \left(\int_{M_1} |f(x_1,x_2)|^{p_1} \,dV_{1}(x_1)\right)^{\frac {p_2-p_1}{p_1}},
 \end{align}
 where $\lambda\in(0,\infty)$.

  \item[\rm (iii)] For any $p_1,\ p_2\in[1,\fz)$, there holds the duality identity
 \begin{align}\label{eq-duality}
  \left(L^{p_2}(L^{p_1})(M)\right)'=L^{p_2'}(L^{p_1'})(M).
 \end{align}
 \end{enumerate}

\end{lemma}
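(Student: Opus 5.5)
The plan is to reduce everything to the classical one-dimensional $L^p$ theory by iterating in the two variables. Fubini's theorem on the product measure $dV=dV_1\otimes dV_2$ lets us write $\int_M fg\,dV=\int_{M_2}\bigl(\int_{M_1} f(\cdot,x_2)g(\cdot,x_2)\,dV_1\bigr)dV_2(x_2)$. For fixed $x_2$, set
\[
F(x_2):=\|f(\cdot,x_2)\|_{L^{p_1}(M_1)},\qquad H(x_2):=\|g(\cdot,x_2)\|_{L^{p_1'}(M_1)}.
\]
Both are $V_2$-measurable by Tonelli's theorem.

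\emph{Part (ii).} For almost every $x_2$, we apply the ordinary Hölder inequality on $M_1$ to get $\int_{M_1}|fg|\,dV_1\le F(x_2)H(x_2)$. Then we apply Hölder on $M_2$ with exponents $p_2,p_2'$, which gives $\int_{M_2}FH\,dV_2\le\|F\|_{L^{p_2}}\|H\|_{L^{p_2'}}$, and this is exactly \eqref{eq-Holder}.

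For the equality case, equality in the chain forces three things:
\begin{itemize}
\item $fg\ge0$ a.e., so that $\int fg=\int|fg|$;
\item equality in the inner Hölder inequality for a.e. $x_2$, that is, $|g(\cdot,x_2)|=c(x_2)|f(\cdot,x_2)|^{p_1-1}$ whenever $F(x_2)\ne0$;
\item equality in the outer Hölder inequality, that is, $H=\lambda' F^{p_2-1}$ a.e.
\end{itemize}
From the second condition, $H(x_2)=c(x_2)F(x_2)^{p_1-1}$ (using $(p_1-1)p_1'=p_1$). Combining this with the third condition gives $c(x_2)=\lambda' F^{p_2-p_1}$. This yields \eqref{eq-Holder=} with $\lambda=\lambda'>0$; note that $\lambda>0$ because $g\not\equiv0$. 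On the set $\{F=0\}$ we need $H=0$, which is consistent with the formula.

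For the converse, we plug \eqref{eq-Holder=} into $\int fg$. We get $\lambda\int_{M_2}F^{p_1}F^{p_2-p_1}\,dV_2=\lambda\|f\|^{p_2}$. A direct computation gives $\|g\|_{L^{p_2'}(L^{p_1'})}=\lambda\|f\|^{p_2-1}$, because $H=\lambda F^{p_1-1}F^{p_2-p_1}=\lambda F^{p_2-1}$ and $(p_2-1)p_2'=p_2$. So equality holds.

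\emph{Part (i).} The inequality "$\ge$" follows from (ii), or from the inner and outer Hölder inequalities directly when some $p_i=1$. For "$\le$", we exhibit near-extremizers.
\begin{itemize}
\item When $1<p_1,p_2$ and $0<\|f\|<\infty$, we take $g$ as in \eqref{eq-Holder=} and normalize it.
\item When $p_1=1$, we replace $|f|^{p_1-1}$ by $1$. When $p_2=1$, we replace the outer factor by $1$.
\item When $\|f\|=\infty$, or to handle integrability issues, we truncate. We set $f_k=\min(|f|,k)\mathbf 1_{E_k}$, where $E_k=A_k\times B_k$ has finite measure and $A_k\uparrow M_1$, $B_k\uparrow M_2$. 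We apply the construction to $f_k$ with $\mathrm{sign} f$ and then use monotone convergence in both the inner and outer integrals.
\end{itemize}
For the extremizer we should use $\mathrm{sign} f$ times the nonnegative expression, so that $fg\ge0$. Also, the supremum of $|\int fg|$ over $g$ supported in $E_k$ with $|g|$ controlled is at least $\int|f_k||g|$. This is standard.

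\emph{Part (iii).} By (i), the map $g\mapsto\Lambda_g(f)=\int fg$ is an isometric embedding of $L^{p_2'}(L^{p_1'})$ into the dual space. The main obstacle is surjectivity. Given a bounded functional $\Lambda$, we first restrict to a product set $E=A\times B$ of finite measure. On $E$, the space $L^{p_2}(L^{p_1})$ contains $L^{\max(p_1,p_2)}(E)$, or at least it contains all bounded functions, continuously. So the map $\nu(S):=\Lambda(\mathbf 1_S)$ on measurable $S\subset E$ is countably additive, since $\|\mathbf 1_{S_n}\|\to0$ by dominated convergence (here we use $p_1,p_2<\infty$). It is also absolutely continuous with respect to $V$. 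The Radon--Nikodym theorem, using $\sigma$-finiteness, gives $g_E$ with $\Lambda(h)=\int hg_E$ for simple $h$, and hence for bounded $h$ supported in $E$. Next we apply (i) with truncations of $g_E$ (the reverse direction of the sup formula, applied to $g$) to get $\|g_E\|_{L^{p_2'}(L^{p_1'})}\le\|\Lambda\|$. The functions $g_E$ are consistent as $E\uparrow M$. Monotone convergence gives a global $g$ with norm at most $\|\Lambda\|$. Finally, density of bounded functions with support in some $E_k$ in $L^{p_2}(L^{p_1})$, again by dominated convergence since $p_i<\infty$, gives $\Lambda=\Lambda_g$.
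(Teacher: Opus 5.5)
Your argument is correct in substance, but it takes a genuinely different route from the paper. The paper proves nothing from scratch here. It cites Benedek--Panzone for the norming formula (i) and for the duality (iii), and it obtains \eqref{eq-Holder} as a consequence of (i). For the equality case it only verifies the ``if'' direction, by plugging \eqref{eq-Holder=} into $\int_M fg\,dV$.

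You instead make the lemma self-contained:
\begin{enumerate}
\item[(ii)] by iterating the classical H\"older inequality, first in $x_1$ and then in $x_2$;
\item[(i)] by explicit, truncated extremizers;
\item[(iii)] by a Radon--Nikodym argument on rectangles $A\times B$ of finite measure, followed by exhaustion and density of bounded functions supported in some $E_k$.
\end{enumerate}

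The main thing your route buys is the ``only if'' half of the equality characterization. The paper asserts it without proof, yet actually uses it in Step~4 of the proof of Theorem~\ref{thm-equiv-mu} (with $p_i'$ in the role of $p_i$) to identify $f^K$. Your analysis of the three equality conditions supplies exactly this: the sign condition, inner equality for a.e.\ $x_2$, and outer equality $H=\lambda F^{p_2-1}$, which together give $c(x_2)=\lambda F^{p_2-p_1}$. The citation route buys brevity and gets the endpoint cases for free.

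In your version, two routine points in (iii) should be written out.
\begin{enumerate}
\item When $p_1=1$ or $p_2=1$, the bound $\|g_E\|_{L^{p_2'}(L^{p_1'})(M)}\le\|\Lambda\|$ needs the norming formula with $g_E$ on the normed side and an infinite exponent. Statement (i) does not cover this. You need near-maximizing sets such as $\{x_1:\ |g_E(x_1,x_2)|>(1-\varepsilon)\|g_E(\cdot,x_2)\|_{L^\infty(M_1)}\}$, or respectively a set of $x_2$ of positive measure on which $\|g_E(\cdot,x_2)\|_{L^{p_1'}(M_1)}$ is close to its essential supremum.
\item The identity $\Lambda(h)=\int_M hg_E\,dV$ is only known for bounded $h$ supported in $E$. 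The extremizer built from \eqref{eq-Holder=} for $g_E$ can be unbounded when $p_2>p_1$, because the factor $\|g_E(\cdot,x_2)\|_{L^{p_1'}(M_1)}^{p_2'-p_1'}$ blows up where the row norm is small. So you should apply $\Lambda$ to $h\mathbf 1_{\{|h|\le n\}}$ and let $n\to\infty$ by monotone convergence, using $hg_E\ge0$.
\end{enumerate}
Neither point threatens the approach.
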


\begin{proof}
 The  equality in \eqref{eq-norm} follows from \cite[Theorem~2]{BenedekPanzone1961Duke}.  The H\"older inequality \eqref{eq-Holder} is a consequence of \eqref{eq-norm}. It is a direct computation that if we take $g$ as in \eqref{eq-Holder=} then we have equality in  \eqref{eq-Holder}.
 Finally, the duality in \eqref{eq-duality} is from \cite[Theorem~1.a)]{BenedekPanzone1961Duke}.
\end{proof}

The rest of this subsection deals with density properties of mixed-norm Lebesgue spaces.

\begin{lemma}\label{lem-dense}
Let  $M = M_1 \times M_2$ and $p_1,\ p_2\in[1,\fz)$.  Then, ${\mathcal C}_c^\infty(M)$
 is dense in $L^{p_2}(L^{p_1})(M)$.
\end{lemma}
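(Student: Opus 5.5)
We prove density of ${\mathcal C}_c^\infty(M)$ in $L^{p_2}(L^{p_1})(M)$ by a chain of three approximations. First by bounded functions with bounded support, then by continuous compactly supported functions, then by smooth ones. Throughout we use that $p_1,p_2<\infty$, so that the mixed norm is continuous along monotone or dominated limits. Concretely: if $|f_k|\le F$ with $F\in L^{p_2}(L^{p_1})(M)$ and $f_k\to f$ a.e., then $\|f_k-f\|_{L^{p_2}(L^{p_1})(M)}\to0$. This follows from two applications of the dominated convergence theorem. For a.e. $x_2$, the inner $L^{p_1}(M_1)$-norm of $f_k-f$ tends to $0$ by dominated convergence with majorant $(2F)^{p_1}$, since $F(\cdot,x_2)\in L^{p_1}(M_1)$ for a.e. $x_2$. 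The outer integral then tends to $0$ by dominated convergence with majorant $\|2F(\cdot,x_2)\|_{L^{p_1}(M_1)}^{p_2}\in L^1(M_2)$.

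\textbf{Step 1 (truncation).} Fix $o=(o_1,o_2)\in M$ and set
$$f_k:=\max\{-k,\min\{f,k\}\}\,\mathbf 1_{B(o,k)}.$$
Then $|f_k|\le|f|$ and $f_k\to f$ pointwise. By the dominated principle above, $f_k\to f$ in $L^{p_2}(L^{p_1})(M)$. It therefore suffices to approximate a bounded function $g$ supported in a compact set $K$.

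\textbf{Step 2 (reduction to $L^{p}$ on a compact set).} For such $g$ and any $h$ supported in a fixed compact neighbourhood $K'\supset K$, we compare the mixed norm with ordinary $L^p$ norms. Write $K'\subset K_1\times K_2$ with $K_i\subset M_i$ compact, and put $p:=\max\{p_1,p_2\}$. Hölder's inequality on the finite-measure sets $K_1$ and $K_2$ gives
$$\|g-h\|_{L^{p_2}(L^{p_1})(M)}\le V_1(K_1)^{\frac1{p_1}-\frac1p}\,V_2(K_2)^{\frac1{p_2}-\frac1p}\,\|g-h\|_{L^p(M)}.$$
This is where the bounded-support reduction is essential. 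It remains to approximate $g\in L^p(M)$ by functions $h\in{\mathcal C}_c^\infty(M)$ with $\operatorname{supp}h\subset K'$.

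\textbf{Step 3 (smooth approximation).} We use the standard density of ${\mathcal C}_c^\infty$ in $L^p$ on a manifold. Cover $K'$ by finitely many coordinate charts and take a subordinate smooth partition of unity $\{\chi_j\}$. In each chart, mollify $\chi_j g$; the Riemannian density is bounded above and below on compact sets of the chart, so Euclidean mollification converges in $L^p(dV)$. For sufficiently small mollification parameter the supports stay inside $K'$. Summing the pieces gives $h\in{\mathcal C}_c^\infty(M)$ with $\|g-h\|_{L^p(M)}$ arbitrarily small, and Step 2 converts this into smallness in $L^{p_2}(L^{p_1})(M)$.

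The only delicate point is Step 1, specifically the dominated convergence for the iterated norm. The measurability of $x_2\mapsto\|f(\cdot,x_2)\|_{L^{p_1}(M_1)}$ is supplied by Tonelli's theorem. The integrability of the majorant requires finiteness of both exponents, which is exactly the hypothesis $p_1,p_2<\infty$; this step would fail for $L^\infty$-type components.
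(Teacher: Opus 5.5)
Your proof is correct, but it takes a different route from the paper's. The paper quotes Benedek--Panzone to reduce to simple functions $\sum_{i=1}^N c_i\mathbf 1_{E_i}(x_1)\mathbf 1_{F_i}(x_2)$ with $V_1(E_i),V_2(F_i)<\infty$. It then approximates each $\mathbf 1_{E}(x_1)\mathbf 1_F(x_2)$ by $\psi_1(x_1)\psi_2(x_2)$ with $\psi_i\in{\mathcal C}_c^\infty(M_i)$. Its only other inputs are the density of ${\mathcal C}_c^\infty(M_i)$ in $L^{p_i}(M_i)$ on each factor, and the fact that the mixed norm of a tensor product factorizes as $\|\psi_1\|_{L^{p_1}(M_1)}\|\psi_2\|_{L^{p_2}(M_2)}$.

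You never split into factors. Iterated dominated convergence reduces you to bounded $g$ with compact support. H\"older on $K_1\times K_2$ then bounds the mixed norm by the $L^{\max\{p_1,p_2\}}(M)$-norm; this is the same device the paper uses, with the $L^\infty$-norm, in Lemma~\ref{lem-dense-Cc}. Ordinary $L^p$-density on the product manifold finishes.

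The paper's argument is shorter and exploits the product structure of the norm, but its substantive step, density of product-set simple functions, is imported. Yours is self-contained. It also has a practical advantage: every operation you use (truncation, a nonnegative partition of unity, a nonnegative mollifier) preserves sign. So it directly gives the nonnegative approximants asserted in Remark~\ref{rem-dense} and used in Theorem~\ref{thm-approx-cap}. On the simple-function route, one must separately check that the Benedek--Panzone approximants of a nonnegative $f$ can be taken with $c_i\ge 0$.

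Two cosmetic points. First, cutting off with $\mathbf 1_{B(o,k)}$ requires $M$ to be connected and complete, so that the balls are precompact and exhaust $M$. The paper's standing assumptions provide this, but you can avoid it by using any compact exhaustion $K_k\uparrow M$. Second, if the functions are complex-valued, truncate the real and imaginary parts separately.
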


\begin{proof}

It is obvious that  ${\mathcal C}_c(M)\subset L^{p_2}(L^{p_1})(M)$.
Given any $f\in L^{p_2}(L^{p_1})(M)$, we have by  \cite[p. 313]{BenedekPanzone1961Duke} that, for any $\varepsilon>0$, there exists a simple function
	$$\varphi(x_1,x_2):=\sum_{i=1}^N c_{i}\,{\bf 1}_{E_i}(x_1)\,{\bf 1}_{F_i}(x_2),$$
where $c_i\in\cc$, $E_i\subset M_1$, $F_i\subset M_2$, $V_{1}(E_i)<\infty$ and  $V_{2}(F_i)<\infty$ for all $i\in\{1,2,\dots, N\}$,
	such that
	\begin{align*}
		\|f-\varphi\|_{L^{p_2}(L^{p_1})(M)}<
		\varepsilon.
	\end{align*}
Thus, to obtain the density of ${\mathcal C}_c^\infty(M)$
in $L^{p_2}(L^{p_1})(M)$, it suffices to consider functions $f$ of type
$$f(x_1,x_2)={\bf 1}_E(x_1){\bf 1}_F(x_2),$$ where $E\subset  M_1$, $ F\subset  M_2$ such that $0<V_{1}(E)<\infty$ and  $0<V_{2}(F)<\infty$.

It is a classical result that ${\mathcal C}_c^\infty(M_i)$ is dense in
$L^{p_i}(M_i)$; see, for example,
  \cite[p.\,101, Exercise~4.4]{Grigoryan2009book}.
Thus,  for any $\varepsilon>0$, there exists a function $\psi_1\in {\mathcal C}_c^\infty(M_1)$ such that
 $$\|{\bf 1}_E-\psi_1\|_{L^{p_1}(M_1)}<\varepsilon \|{\bf 1}_F\|_{L^{p_2}(M_2)}^{-1}.$$
 In a similar manner, ${\mathcal C}_c^\infty(M_2)$ is dense in  $L^{p_2}(M_2)$.
So, for the above $\varepsilon>0$, there exists a function  $\psi_2\in {\mathcal C}_c^\infty(M_2)$ such that
 $$\|{\bf 1}_F-\psi_2\|_{L^{p_2}(M_2)}<\varepsilon \|\psi_1\|_{L^{p_1}(M_1)}^{-1}.$$
For any $x_1\in M_1$ and $x_2\in M_2$, let $$h(x_1,x_2):=\psi_1(x_1)\psi_2(x_2),$$ which belongs to ${\mathcal C}_c^\infty(M)$. Moreover, we have
	\begin{align*}
		&\|{\bf 1}_{E\times F}-h\|_{L^{p_2}(L^{p_1})(M)}\\
		&\quad=\lf(\int_{M_2}\lf(\int_{M_1}|{\bf 1}_{E}(x_1){\bf 1}_F(x_2)-\psi_1(x_1)\psi_2(x_2)|^{p_1} \,dV_{1}(x_1)\r)
^{\frac {p_2} {p_1} }\,dV_{2}(x_2)\r)^{\frac 1 {p_2}}\\
		&\quad\leq \lf(\int_{M_2}\lf(\int_{M_1}|{\bf 1}_{E}(x_1){\bf 1}_F(x_2)-\psi_1(x_1){\bf 1}_F(x_2)|^{p_1} \,dV_{1}(x_1)\r)
^{\frac {p_2} {p_1} }\,dV_{2}(x_2)\r)^{\frac 1 {p_2}}\\
		&\qquad +  \lf(\int_{M_2}
\lf(\int_{M_1}|\psi_1(x_1){\bf 1}_F(x_2)-\psi_1(x_1)\psi_2(x_2)|^{p_1}\, dV_{1}(x_1)\r)^{\frac {p_2} {p_1} }\,dV_{2}(x_2)\r)^{\frac 1 {p_2}}\\
&\quad= \|{\bf 1}_E-\psi_1\|_{L^{p_1}(M_1)} \|{\bf 1}_F\|_{L^{p_2}(M_2)}
+ \|\psi_1\|_{L^{p_1}(M_1)} \|{\bf 1}_F-\psi_2\|_{L^{p_2}(M_2)}\\
		&\quad<2\varepsilon.
	\end{align*}
In conclusion, we obtain that ${\mathcal C}_c^\infty(M)$
is dense in $L^{p_2}(L^{p_1})(M)$.
\end{proof}

\begin{remark}\label{rem-dense}
For \(p \in [1, \infty)\) and a general Riemannian  manifold
\((M, g, d, V)\), any nonnegative function \(f \in L^p(M)\) can be approximated by nonnegative functions in \({\mathcal C}_c^\infty(M)\).
Indeed, the proof for the case \(M = \rn\) follows from the classical density result \cite[Theorem~2.3]{Grigoryan2009book}; this argument extends directly to a general Riemannian manifold by using the partition of unity method and local coordinate charts.
By this general fact and the proof of Lemma \ref{lem-dense}, we obtain that any   nonnegative $f\in L^{p_2}(L^{p_1})(M)$  can be approximated with respect to the  mixed norm $\|\cdot\|_{L^{p_2}(L^{p_1})(M)}$ by a sequence $\{f_j\}_{j\in\nn}$ of nonnegative ${\mathcal C}_c^\infty(M)$-functions.
\end{remark}

\begin{lemma}\label{lem-dense-Cc}
  Let \(M = M_1 \times M_2\) and $p_1,\ p_2\in[1,\fz)$.  Then, for any $f\in {\mathcal C}_c(M)$,
  there exists a sequence $\{f_j\}_{j\in\nn}\subset {\mathcal C}_c^\infty(M)$ such that
  $$\lim_{j\to\infty}\left(\|f_j-f\|_{L^\infty(M)}+\|f_j-f\|_{L^{p_2}(L^{p_1})(M)}\right)=0.$$
\end{lemma}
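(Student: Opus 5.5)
We will obtain the approximants by mollifying $f$ with a smooth approximate identity that works directly on the manifold, and then control both norms by exploiting that every $f_j$ is supported in one fixed compact set.

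\textbf{Step 1: construction.} Let $K:=\supp f$, which is compact, and fix a precompact open set $U\Supset K$. Cover $\overline U$ by finitely many coordinate charts $\{(W_\ell,\phi_\ell)\}_{\ell=1}^L$ and choose a smooth partition of unity $\{\chi_\ell\}$ subordinate to this cover, with $\sum_\ell\chi_\ell=1$ on $K$. Each $\chi_\ell f$ is continuous with compact support in $W_\ell$. We transport $(\chi_\ell f)\circ\phi_\ell^{-1}$ to $\mathbb R^{\dim M}$ and convolve it with a standard mollifier $\eta_{\varepsilon}$. For $\varepsilon$ small, the support stays inside $\phi_\ell(W_\ell)$, so we can pull it back to a function $g_{\ell,\varepsilon}\in{\mathcal C}_c^\infty(W_\ell)$. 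Since a compactly supported continuous function on $\mathbb R^N$ is uniformly continuous, $\eta_\varepsilon*h\to h$ uniformly. Hence $g_{\ell,\varepsilon}\to\chi_\ell f$ uniformly on $M$ as $\varepsilon\to0$. Set
\[
f_j:=\sum_{\ell=1}^L g_{\ell,\varepsilon_j}
\]
with $\varepsilon_j\downarrow0$. Then $f_j\in{\mathcal C}_c^\infty(M)$ and $\|f_j-f\|_{L^\infty(M)}\to0$. Moreover, for all $j$ large, $\supp f_j\subset\overline U$, a fixed compact set.

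\textbf{Step 2: the mixed norm.} By \eqref{eq-inclu} and compactness, $\overline U\subset A_1\times A_2$ for some compact sets $A_i\subset M_i$, and each $A_i$ has finite measure $V_i(A_i)<\infty$. For any $h$ supported in $A_1\times A_2$, a direct computation from Definition~\ref{def-LupLvq} gives
\[
\|h\|_{L^{p_2}(L^{p_1})(M)}\le V_1(A_1)^{1/p_1}\,V_2(A_2)^{1/p_2}\,\|h\|_{L^\infty(M)}.
\]
Applying this with $h=f_j-f$ shows that the mixed norm of $f_j-f$ is bounded by a constant times its sup norm. Together with Step 1, this yields the claimed limit.

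The only mildly delicate point is Step 1: making sure the chartwise mollifications remain compactly supported inside the charts and converge uniformly. This is standard once the partition of unity is chosen on a neighborhood of $K$. The mixed-norm estimate in Step 2 is routine once all supports are uniformly controlled.
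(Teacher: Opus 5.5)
Your proof is correct and follows essentially the same route as the paper: you take a uniform smooth approximation of $f$ whose supports all lie in one fixed compact set contained in a product $A_1\times A_2$, and then you bound the mixed norm of the difference by $V_1(A_1)^{1/p_1}V_2(A_2)^{1/p_2}$ times its sup norm. The only difference is that you construct the uniform approximation explicitly, via a partition of unity and chartwise mollification, whereas the paper cites an exercise in Grigor'yan's monograph for the existence of $\phi\in{\mathcal C}_c^\infty(\Omega_1\times\Omega_2)$ that is uniformly close to $f$.
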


\begin{proof}
We may assume without loss of generality that \(f\) is supported in \(K_1 \times K_2\), where \(K_i \subset M_i\) is compact.

For \(i = 1,2\), take a nonempty precompact open set \(\Omega_i \Subset  M_i\) containing \(K_i\).
Then, for any \(\varepsilon > 0\), there exists a function \(\phi \in \mathcal{C}_c^\infty(M)\) such that
\[
\|\phi - f\|_{L^\infty(M)} < \varepsilon \, \min\left\{1,\ V_1(\Omega_1)^{-1/p_1}\, V_2(\Omega_2)^{-1/p_2}\right\};
\]
see, for instance, \cite[Exercise~4.5]{Grigoryan2009book}. Moreover, the function \(\phi\) can be chosen so that
$
\operatorname{supp} \phi \subset \Omega_1 \times \Omega_2.
$
Consequently,
\begin{align*}
&\|\phi - f\|_{L^{p_2}(L^{p_1})(M)}\\
&\quad= \left( \int_{\Omega_2} \left( \int_{\Omega_1} |\phi(x_1, x_2) - f(x_1, x_2)|^{p_1} \, dV_{1}(x_1) \right)^{\frac{p_2}{p_1}} dV_{2}(x_2) \right)^{\frac{1}{p_2}} \\
&\quad\le \|\phi - f\|_{L^\infty(M)} \, V_{1}(\Omega_1)^{1/p_1} \, V_{2}(\Omega_2)^{1/p_2} \\
&\quad< \varepsilon.
\end{align*}
For any $j\in\nn$, by taking \(\varepsilon = 2^{-j}\) and setting \(f_j = \phi\), we obtain the desired sequence \(\{f_j\}_{j \in \mathbb{N}}\).
\end{proof}

\subsection{Measure-theoretic properties of the mixed-capacity}\label{sec-cap}

\begin{definition}
We use the terminology ``quasi-everywhere'' (abbreviated as $\operatorname{cap}_{p_1,\,p_2}$-q.e.) to mean ``everywhere except on a set of $\operatorname{cap}_{p_1,\,p_2}$-capacity zero''.
\end{definition}

\begin{remark}
Let \(M = M_1 \times M_2\),  \(\Omega \subset M\) be an open set and \(p_1, p_2 \in [1,\infty)\).
For any  $f \in L^{p_2}(L^{p_1})(M)$ and $t \in (0, \infty)$, by taking $t^{-1}|f|$ as a test function, we obtain
        $$\operatorname{cap}_{p_1,\,p_2}\lf(\lf\{x \in \Omega:\,\, |G^\Omega f(x)| \geq t \r\};\  \Omega\r)
	\leq t^{-p_2}\|f\|_{L^{p_2}(L^{p_1})(M)}^{p_2},$$
which, upon letting \(t\to\infty\), yields
$$\operatorname{cap}_{p_1,\,p_2}\lf(\lf\{x \in \Omega:\,\, |G^\Omega f(x)| =\infty \r\};\ \Omega\r)
=0.$$
Consequently, for every \(f \in L^{p_2}(L^{p_1})(M)\), the function \(G^\Omega f\) is finite $\operatorname{cap}_{p_1,\,p_2}$-q.e. on $\Omega$.
\end{remark}

The following measure-theoretic properties of mixed-capacities, already proved in \cite{JinLiuWuXiao2026JGA} when \(M = \mathbb{R}^n \times \mathbb{R}^n\), show that \(\operatorname{cap}_{p_1,\,p_2}\) is a \emph{Choquet capacity} in the sense of Choquet \cite{Choquet1953AIF} for any \(p_1, p_2 \in (1, \infty)\).

\begin{theorem}\label{Thm-Choquet}
Let \(M = M_1 \times M_2\),  \(\Omega \subset M\) be an open set and \(p_1, p_2 \in (1,\infty)\).
 Then the following  hold:
\begin{enumerate}[\rm (i)]
	\item $\operatorname{cap}_{p_1,\,p_2}(\emptyset;\ \Omega)=0$, where $\emptyset$ denotes the empty set in $M$.
	
	\item (Monotonicity) For any subsets $E_{1}, E_{2} $ of $\Omega$ satisfying $E_{1} \subset  E_{2}$,
	$$\operatorname{cap}_{p_1,\,p_2}(E_{1};\ \Omega)
	\leq \operatorname{cap}_{p_1,\,p_2}(E_{2};\ \Omega).$$
	
	\item (Subadditivity) For any sequence of subsets $\{E_{j}\}_{j \in \mathbb{N}}\subset\Omega$,
    \begin{align*}
    \operatorname{cap}_{p_1,\,p_2}\lf(\bigcup_{j \in \mathbb{N}}E_{j};\ \Omega\r)
    	\leq \left(\sum_{j \in \mathbb{N}}\lf[\operatorname{cap}_{p_1,\,p_2}\lf(E_{j};\ \Omega\r)\r]^{\frac{p_1}{p_2}\wedge 1}\right)^{\frac{p_2}{p_1}\vee 1}.
    \end{align*}

\item
For any $\{f_k\}_{k \in \mathbb{N}}$ and $f$  in $L^{p_2}(L^{p_1})(M)$
 satisfying
\begin{align*}
\lim_{k\to\infty} \lf\|f_{k}-f\r\|_{L^{p_2}(L^{p_1})(M)}=0,
\end{align*}
there exists a subsequence $\{f_{k_{i}}\}_{i \in \mathbb{N}}$
such that
\begin{align*}
\lim_{i\to\infty} G^\Omega f_{k_i}=G^\Omega f \qquad \operatorname{cap}_{p_1,\,p_2}\text{-q.e. on}\ \, \Omega.
\end{align*}

    \item (Continuity on the right)
	For any decreasing sequence of compact sets  $\{K_j\}_{j \in \mathbb{N}}\subset\Omega$,
	$$\lim_{j \to \infty}\operatorname{cap}_{p_1,\,p_2}(K_j;\ \Omega)
	= \operatorname{cap}_{p_1,\,p_2}\lf( \bigcap_{j \in \mathbb{N}}K_j;\ \Omega\r).$$

	\item (Continuity on the left)
	For any increasing sequence $\{E_j\}_{j \in \mathbb{N}}\subset\Omega$,
	$$\lim_{j \to \infty}\operatorname{cap}_{p_1,\,p_2}(E_j;\ \Omega)
	= \operatorname{cap}_{p_1,\,p_2}\lf( \bigcup_{j \in \mathbb{N}}E_j;\ \Omega\r).$$

\item (Regularity property)  Each Borel set $E\subset \Omega$ enjoys the outer regularity
  $$
\operatorname{cap}_{p_1,\,p_2}(E;\ \Omega)=\inf_{\gfz{E \subset  O\subset \Omega} {O \,\text{open}}}\operatorname{cap}_{p_1,\,p_2}(O;\ \Omega),
$$
    and the  inner regularity
$$
\operatorname{cap}_{p_1,\,p_2}(E;\ \Omega)=\sup_{\gfz{ K\subset  E } {K \,\text{compact}}}\operatorname{cap}_{p_1,\,p_2}(K;\ \Omega).
$$
\end{enumerate}
In particular, items (i), (ii), (iii), (iv), and the outer regularity of (vii) hold true even for the endpoint cases $p_1=1$ or $p_2=1$.
\end{theorem}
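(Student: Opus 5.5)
The plan is to follow the standard route for Riesz/Bessel capacities, as in the $\mathbb R^n\times\mathbb R^n$ case of \cite{JinLiuWuXiao2026JGA}, with $G^\Omega$ in place of the Riesz kernel. Two ingredients specific to the present setting will be used. The first is the lower semicontinuity of $G^\Omega f$, in the form of Corollary~\ref{cor-Gf-lowersemi}. The second is the Banach-space structure of $L^{p_2}(L^{p_1})(M)$ from Lemma~\ref{lem-duality}, in particular its reflexivity for $p_1,p_2\in(1,\infty)$.

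Items (i) and (ii) are immediate from the definition: take $f=0$ for (i), and note that the admissible class for $E_2$ is contained in that for $E_1$. For (iii), fix $\varepsilon>0$ and choose admissible $f_j$ for $E_j$ with $\|f_j\|^{p_2}\le \operatorname{cap}_{p_1,p_2}(E_j;\Omega)+\varepsilon 2^{-j}$. Put
$$f:=\Big(\sum_j f_j^{p_1}\Big)^{1/p_1}\ge \sup_j f_j .$$
Positivity of $G^\Omega$ gives $G^\Omega f\ge 1$ on $\bigcup_jE_j$. Set $a_j(x_2):=\|f_j(\cdot,x_2)\|_{L^{p_1}(M_1)}^{p_1}$, so that $\|f\|^{p_2}=\int_{M_2}\big(\sum_j a_j\big)^{p_2/p_1}dV_2$. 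The estimate then splits into two cases.
\begin{itemize}
\item If $p_2\le p_1$, the elementary inequality $(\sum a_j)^{s}\le\sum a_j^{s}$ for $s\le1$ gives $\|f\|^{p_2}\le \sum_j\|f_j\|^{p_2}$.
\item If $p_2>p_1$, Minkowski's inequality in $L^{p_2/p_1}(M_2)$ gives $\|f\|^{p_1}\le\sum_j\|f_j\|^{p_1}$.
\end{itemize}
This is exactly the stated exponent pattern; letting $\varepsilon\to0$ finishes (iii). Nothing here uses $p_i>1$.

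For (iv), pass to a subsequence with $\|f_{k_i}-f\|\le 4^{-i}$ and set $E_i:=\{G^\Omega|f_{k_i}-f|>2^{-i}\}$. The remark preceding the theorem gives $\operatorname{cap}(E_i;\Omega)\le 2^{-ip_2}$. By (iii), $\operatorname{cap}\big(\bigcup_{i\ge m}E_i;\Omega\big)\to0$ as $m\to\infty$. Off this union $|G^\Omega f_{k_i}-G^\Omega f|\le 2^{-i}$ for all $i\ge m$, because $G^\Omega f$ is finite q.e. Hence the exceptional set $\bigcap_m\bigcup_{i\ge m}E_i$ has capacity zero, which proves (iv).

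For (v), take $K=\bigcap_jK_j$, $\varepsilon>0$, and $f$ admissible for $K$ with near-optimal norm. By Corollary~\ref{cor-Gf-lowersemi} the set $\{G^\Omega f>1-\varepsilon\}$ is open and contains $K$. By compactness it contains $K_j$ for all large $j$, so $\operatorname{cap}(K_j;\Omega)\le (1-\varepsilon)^{-p_2}\|f\|^{p_2}$. Monotonicity gives the reverse inequality. The same trick proves outer regularity in (vii): for an arbitrary $E$ and $f$ admissible for $E$, the open set $O=\{G^\Omega f>(1+\varepsilon)^{-1}\}$ contains $E$ and satisfies $\operatorname{cap}(O;\Omega)\le(1+\varepsilon)^{p_2}\|f\|^{p_2}$. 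This also works for $p_i=1$, matching the final sentence of the theorem.

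The main obstacle is (vi). Here I would choose near-optimal $f_j$ for $E_j$, which form a bounded sequence if the limit of capacities is finite. By reflexivity (Lemma~\ref{lem-duality}(iii) with $1<p_i<\infty$) extract a weakly convergent subsequence $f_j\rightharpoonup f$. By Mazur's lemma, convex combinations $g_j$ of $\{f_i\}_{i\ge j}$ converge strongly to $f$. Since $E_j\subset E_i$ for $i\ge j$, every such combination satisfies $G^\Omega g_j\ge1$ on $E_j$. Then (iv) yields $G^\Omega f\ge1$ on $\bigcup E_j$ outside a set $Z$ of capacity zero. Weak lower semicontinuity of the norm gives $\|f\|^{p_2}\le\lim_j\operatorname{cap}(E_j;\Omega)$. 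Adding to $f$ an admissible $h$ for $Z$ of arbitrarily small norm, and using the triangle inequality, produces an admissible function for $\bigcup E_j$ with norm arbitrarily close to $\|f\|$. This step fails at $p_i=1$ because reflexivity is lost. Finally, (ii), (v) and (vi) show that $\operatorname{cap}_{p_1,p_2}(\cdot;\Omega)$ is a Choquet capacity. Choquet's capacitability theorem \cite{Choquet1953AIF} then gives inner regularity on Borel sets, completing (vii).
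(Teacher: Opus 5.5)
Your proposal is correct and follows essentially the same route as the paper. The paper proves outer regularity exactly as you do, via the lower semicontinuity in Corollary~\ref{cor-Gf-lowersemi}, and obtains the remaining items by transferring the standard arguments of \cite{JinLiuWuXiao2026JGA}, which you reconstruct: Minkowski-type splitting for subadditivity, the capacitary Chebyshev bound plus (iii) for q.e.\ convergence, weak compactness with Mazur and (iv) for left continuity, and Choquet's theorem for inner regularity. One point to tighten in (iv): what must be finite q.e.\ is $G^\Omega|f|$, so that $G^\Omega f_{k_i}-G^\Omega f=G^\Omega(f_{k_i}-f)$ makes sense, and this follows from the same remark applied to $|f|$.
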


\begin{proof}
Items (i), (ii), and (iii) follow along the same lines as the proof of \cite[Theorem 4.1]{JinLiuWuXiao2026JGA}. Moreover, item (iv) can be obtained by applying the same arguments used in \cite[Lemma~4.2]{JinLiuWuXiao2026JGA}.

Once the outer regularity in (vii) is established, we may combine (i) through (iv) and again follow the reasoning in \cite[Theorem 4.3, Corollary 4.4]{JinLiuWuXiao2026JGA} to derive items (v) and (vi), as well as the inner regularity stated in (vii).

To obtain the outer regularity, we will use Corollary \ref{cor-Gf-lowersemi}.
First, by (ii), we see that the proof of the outer regularity can be reduced to proving
\begin{align}\label{eq1-outer}
\operatorname{cap}_{p_1,\,p_2}(E;\ \Omega)\ge \inf_{\gfz{E\subset O\subset \Omega} {O \,\text{open}}}\operatorname{cap}_{p_1,\,p_2}(O;\ \Omega).
\end{align}
Assume without loss of generality that $\operatorname{cap}_{p_1,\,p_2}(E;\ \Omega)$ is finite.
By \eqref{eq-cap},  for any $\varepsilon\in(0,1)$, there exists $0 \le f \in L^{p_2}(L^{p_1})(M)$
such that $G^{\Omega} f \ge \mathbf{1}_E$  and
\begin{align}\label{eq2-outer}
	\|f\|_{L^{p_2}(L^{p_1})(M)}^{p_2}
	\leq \operatorname{cap}_{p_1,\,p_2}(E;\,\Omega) + \varepsilon.
\end{align}
Let $f_{\varepsilon}:=(1-\varepsilon)^{-1}f$.
Applying Corollary \ref{cor-Gf-lowersemi} yields that the set
$$O_{\varepsilon}:=\{x \in \Omega:\,\,G^\Omega f_{\varepsilon}(x) > 1 \}=\{x \in M:\,\,G^\Omega f(x) > 1-\varepsilon \}$$
is  open and contains $E$.
 This, along with \eqref{eq-cap} and \eqref{eq2-outer}, yields
\begin{align*}
\operatorname{cap}_{p_1,\,p_2}(O_\varepsilon;\ \Omega)
&\leq \|f_\varepsilon\|_{L^{p_2}(L^{p_1})(M)}^{p_2}\\
&
	= (1-\varepsilon)^{-p_2}\|f\|_{L^{p_2}(L^{p_1})(M)}^{p_2}\\
&
\leq (1-\varepsilon)^{-p_2}
\left(\operatorname{cap}_{p_1,\,p_2}(E;\,\Omega) + \varepsilon\right).
\end{align*}
Letting $\varepsilon \rightarrow 0$ yields \eqref{eq1-outer}. This ends the proof of the outer regularity.
\end{proof}

\subsection{Equivalent characterizations of the mixed-capacity}\label{sec-cap-equiv}

\begin{theorem}\label{thm-approx-cap}
Let $M=M_1\times M_2$, $\Omega\subset M$ be an open set, and
\(\{\Omega_k\}_{k \in \mathbb N}\) be an exhaustion sequence of \(\Omega\).
Then, for any $p_1,\ p_2\in [1,\fz)$ and compact set $K\subset \Omega$, 
\begin{align}\label{eq-cap-limit}
\lim_{k\to\infty}\operatorname{cap}_{p_1,\,p_2}(K;\,\Omega_k)
=\operatorname{cap}_{p_1,\,p_2}(K;\, \Omega).
\end{align}
\end{theorem}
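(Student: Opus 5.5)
First I would record monotonicity in the domain. Since $\Omega_k\subset\Omega_{k+1}\subset\Omega$, the Dirichlet heat kernels satisfy $p_t^{\Omega_k}\le p_t^{\Omega_{k+1}}\le p_t^{\Omega}$. Hence $G^{\Omega_k}f\le G^{\Omega_{k+1}}f\le G^\Omega f$ for every $f\ge0$. Any test function admissible for $\operatorname{cap}_{p_1,p_2}(K;\Omega_k)$ is therefore admissible for $\Omega_{k+1}$ and for $\Omega$. For $k$ large enough that $K\subset\Omega_k$, this gives the monotone chain
$$\operatorname{cap}_{p_1,p_2}(K;\Omega_k)\ge \operatorname{cap}_{p_1,p_2}(K;\Omega_{k+1})\ge \operatorname{cap}_{p_1,p_2}(K;\Omega).$$
So the limit exists and is $\ge \operatorname{cap}_{p_1,p_2}(K;\Omega)$. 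It remains to prove the reverse inequality, and for that we may assume $\operatorname{cap}_{p_1,p_2}(K;\Omega)<\infty$.

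For the reverse inequality, fix $\varepsilon\in(0,1)$. Choose $0\le f\in L^{p_2}(L^{p_1})(M)$ with $G^\Omega f\ge \mathbf 1_K$ and $\|f\|^{p_2}\le \operatorname{cap}_{p_1,p_2}(K;\Omega)+\varepsilon$. Set $f_\varepsilon=(1-\varepsilon)^{-1}f$, which gives $G^\Omega f_\varepsilon>1$ on $K$. Next truncate, as in the proof of Corollary~\ref{cor-Gf-lowersemi}: put $f_j:=\min\{j,f_\varepsilon\}\mathbf 1_{\Omega_j}$ and take $\lambda_j\downarrow0$. Then $f_j\le f_\varepsilon$, so $\|f_j\|\le\|f_\varepsilon\|$.

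The key step is to pass from $G^\Omega$ to $G^{\Omega_k}$ uniformly on the compact set $K$. I would use two monotone convergences together.
\begin{enumerate}[(a)]
\item For fixed $j$, $G^{\Omega_k}_{\lambda_j}f_j\nearrow G^{\Omega}_{\lambda_j}f_j$ pointwise on $\Omega$ as $k\to\infty$. This follows from $p_t^{\Omega_k}\nearrow p_t^\Omega$ (the exhaustion property underlying \eqref{eq-GkG}) and monotone convergence in $t$ and $y$.
\item $G^\Omega_{\lambda_j}f_j\nearrow G^\Omega f_\varepsilon$ as $j\to\infty$.
\end{enumerate}
Consider the increasing family of functions $g_{k}:=G^{\Omega_k}_{\lambda_k}f_k$, indexed diagonally. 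Each is continuous on $\Omega_k$ by Lemma~\ref{lem-Gf-continue}, applied with $\Omega_k$ in place of $\Omega$. The family increases in $k$, and $\sup_k g_k\ge G^\Omega f_\varepsilon>1$ on $K$: for any fixed $j$, (a) gives $\sup_k G^{\Omega_k}_{\lambda_j}f_j=G^\Omega_{\lambda_j}f_j$, and monotonicity in $j$ together with (b) finishes. The sets $U_k:=\{x\in\Omega_k: g_k(x)>1\}$ are therefore open, increasing, and cover $K$. By compactness, $K\subset U_{k_0}$ for some $k_0$.

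Finally, for $k\ge k_0$ we have $G^{\Omega_k}f_\varepsilon\ge G^{\Omega_k}_{\lambda_k}f_k\ge g_{k_0}>1$ on $K$, so $f_\varepsilon$ is admissible for $\operatorname{cap}_{p_1,p_2}(K;\Omega_k)$. This yields
$$\operatorname{cap}_{p_1,p_2}(K;\Omega_k)\le(1-\varepsilon)^{-p_2}\bigl(\operatorname{cap}_{p_1,p_2}(K;\Omega)+\varepsilon\bigr).$$
Letting $k\to\infty$ and then $\varepsilon\to0$ gives the reverse inequality. Nothing in the argument uses $p_i>1$; only $f\ge0$ and monotone convergence are needed, so it covers $p_1,p_2\in[1,\infty)$.

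I expect the main obstacle to be justifying (a), namely the pointwise convergence $p_t^{\Omega_k}\nearrow p_t^\Omega$ for Dirichlet heat kernels on an exhaustion of a general open $\Omega$. I would cite the standard result in \cite{Grigoryan2009book} on exhaustions; \eqref{eq-GkG} is its integrated form. The continuity needed to invoke compactness is supplied by Lemma~\ref{lem-Gf-continue}, which applies because $f_k$ is bounded with compact support in $\Omega_k$.
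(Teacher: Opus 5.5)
Your proof is correct, and it takes a different and shorter route than the paper's. You keep the near-extremal function itself, $f_\varepsilon=(1-\varepsilon)^{-1}f$. A Dini-type argument then shows that $f_\varepsilon$ is already admissible for $\operatorname{cap}_{p_1,p_2}(K;\Omega_k)$ for all large $k$: monotone convergence $G^{\Omega_k}\uparrow G^\Omega$ along the exhaustion, openness of the superlevel sets, and compactness of $K$. So the only loss is the factor $(1-\varepsilon)^{-p_2}$, and you need neither density in $L^{p_2}(L^{p_1})(M)$ nor Fatou's lemma.

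The paper instead approximates $f$ in the mixed norm by cut-off smooth functions $f_i\phi_i$. It takes running maxima $\psi_k\in\mathcal C_c(\Omega_{k-1})$, paying $2\varepsilon$ in norm, and uses Fatou to get $\mathbf 1_K\le\liminf_k G^{\Omega_k}\psi_k$. Only then does it run the same resolvent/compactness argument. The payoff of this heavier route is that it yields a \emph{continuous, compactly supported} near-optimal test function $\psi_{k_\varepsilon}$ with $G^{\Omega_{k_\varepsilon}}\psi_{k_\varepsilon}>1-\varepsilon$ on $K$. That function is reused verbatim in the proof of Theorem~\ref{thm-Ccinfty-cap}. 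There, $\psi_{k_\varepsilon}$ is uniformly approximated by $\mathcal C_c^\infty$ functions, and the $L^\infty$-boundedness of $G^{\Omega_{k_\varepsilon}}$ transfers the error. Your truncations are merely bounded and measurable, so they cannot be uniformly approximated by continuous functions and would not feed that later step. For the limit statement alone, though, your argument is cleaner, and it also visibly works for $p_1,p_2\in[1,\infty)$.

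One small slip concerns the support of your truncations. The function $f_k=\min\{k,f_\varepsilon\}\mathbf 1_{\Omega_k}$ is supported in $\overline{\Omega_k}$, not in a compact subset of $\Omega_k$. So Lemma~\ref{lem-Gf-continue} does not literally apply to $G^{\Omega_k}_{\lambda_k}f_k$. You can fix this by using $\mathbf 1_{\Omega_{k-1}}$ instead. More simply, apply Corollary~\ref{cor-Gf-lowersemi} with $\Omega_k$ in place of $\Omega$: this shows directly that $\{G^{\Omega_k}f_\varepsilon>1\}$ is open. These sets increase in $k$ and cover $K$ by the exhaustion version of \eqref{eq-GkG}, so the diagonal truncation becomes unnecessary.
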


\begin{proof}
Since \( K \subset \Omega \) and \( \{\Omega_k\}_{k\in\nn} \) exhausts \( \Omega \), there exists \( k_0 \) such that \( K \subset \Omega_k \) for all \( k \geq k_0 \). We only consider \( k \geq k_0 \).
For such $k$, since $G^{\Omega_k}f\le G^\Omega f$ for all  nonnegative function $f$, it follows
 that
  $$
  \operatorname{cap}_{p_1,\,p_2}(K;\,\Omega_k)
\ge \operatorname{cap}_{p_1,\,p_2}(K;\, \Omega)
  $$
  by their definitions,
  which leads to the $\ge$ inequality in \eqref{eq-cap-limit}.

To prove the inequality \(\le\) in \eqref{eq-cap-limit}, we may assume that \(\operatorname{cap}_{p_1,\,p_2}(K;\, \Omega)\) is finite; otherwise there is nothing to prove.
For any \(\varepsilon > 0\), there exists a function \(0\le f \in L^{p_2}(L^{p_1})(M)\) such that
\[
G^\Omega f  \ge \mathbf{1}_K
\]
and
\[
\| f \|_{L^{p_2}(L^{p_1})(M)}^{p_2} < \operatorname{cap}_{p_1,\,p_2}(K;\, \Omega) + \varepsilon.
\]
 We may as well assume that $f=0$ outside of $\Omega$.
By the density of \(\mathcal{C}_c^\infty(M)\) in \(L^{p_2}(L^{p_1})(M)\) (see Lemma \ref{lem-dense} and Remark \ref{rem-dense}), we can choose a nonnegative sequence \(\{f_i\}_{i \in \mathbb{N}} \subset \mathcal{C}_c^\infty(M)\) such that
\[
\lim_{i \to \infty} \| f_i - f \|_{L^{p_2}(L^{p_1})(M)} = 0.
\]
After passing to a subsequence if necessary, we may assume that \(f_i \to f\) a.e. on \(M\) and that for every \(i \in \mathbb{N}\),
\begin{align}\label{eq-norm-fj-f}
\| f_i - f \|_{L^{p_2}(L^{p_1})(M)} < 2^{-i} \varepsilon.
\end{align}
Note that $f_i$ might be nonzero outside of $\Omega$.

For the given exhaustion sequence \(\{\Omega_i\}_{i \in \mathbb N}\)  of \(\Omega\),
by passing to a subsequence if necessary, we may assume that for each \(\Omega_i\),
\begin{align*}
\|f {\mathbf 1}_{M\setminus \Omega_{i}}\|_{L^{p_2}(L^{p_1})(M)}
=\|f {\mathbf 1}_{\Omega\setminus \Omega_{i}}\|_{L^{p_2}(L^{p_1})(M)} < 2^{-i-3}\varepsilon.
\end{align*}
Choose a sequence of cutoff functions \(\{\phi_i\}_{i \in \mathbb{N}} \subset \mathcal{C}_c^\infty(M)\) such that for all \(i \ge 4\),
\begin{align*}
\begin{cases}
   \phi_i=1\ \ \text{on}\ \ \Omega_{i-3}; \\
   0\le \phi_i\le 1; \\
    \phi_i=0\ \ \text{outside of}\ \ \Omega_{i-2}.
  \end{cases}
  \end{align*}
For each \(k \ge 4\), define
\begin{align}\label{eq-def-psik}
\psi_k := (f_4\phi_4) \vee \cdots \vee (f_k\phi_k).
\end{align}
Then \(\{\psi_k\}_{k =4}^\infty\) is an increasing sequence. Moreover, using \eqref{eq-norm-fj-f}, we obtain
 \begin{align}\label{eq-norm-psik-f}
&\|\psi_k-f\|_{L^{p_2}(L^{p_1})(M)}\\
&\quad\le \left\|\sum_{i=4}^k |f_i\phi_i -f|\right\|_{L^{p_2}(L^{p_1})(M)}\notag\\
&\quad\le \sum_{i=4}^k \left(\left\|(f_i-f)\phi_i \right\|_{L^{p_2}(L^{p_1})(M)}
+ \left\|f(1-\phi_i) \right\|_{L^{p_2}(L^{p_1})(M)}\right)\notag\\
&\quad\le \sum_{i=4}^k \left(\left\|f_i-f\right\|_{L^{p_2}(L^{p_1})(M)}
+ \left\|f {\mathbf 1}_{M\setminus \Omega_{i-3}} \right\|_{L^{p_2}(L^{p_1})(M)}\right)\notag\\
&\quad\le \sum_{i=4}^k(2^{-i}\varepsilon+2^{-i}\varepsilon)\notag\\
&\quad<2\varepsilon. \notag
	\end{align}

For any \(k \ge 4\), note that \(f_k \phi_k \in \mathcal{C}_c^\infty(\Omega_{k-1})\) and, hence, by \eqref{eq-def-psik},
\begin{align}\label{eq-psi-Cc}
\psi_k \in \mathcal{C}_c(\Omega_{k-1})\subset \mathcal{C}_c(\Omega_k) \subset \mathcal{C}_c(M) \subset L^{p_2}(L^{p_1})(M).
\end{align}
Then, for any \(\lambda > 0\), applying Lemma \ref{lem-Gf-continue} yields that
\(G_\lambda^{\Omega_k}(\psi_k) \in \mathcal{C}(\Omega_k)\). In particular, the set
\[
\left\{ x \in M :\ G_\lambda^{\Omega_k}(\psi_k)(x) > 1 - \varepsilon \right\}
\]
is open and contained in \(\Omega_k\).
Noting that \(\{f_k\phi_k\}_{k =4}^\infty\) converges to \(f\) almost everywhere on \(M\), it follows from the Fatou lemma that
\[
\mathbf{1}_K \le G^\Omega f \le \liminf_{k\to\infty} G^{\Omega_k}(f_k\phi_k)
\le \liminf_{k\to\infty} G^{\Omega_k}(\psi_k)
= \liminf_{k\to\infty} \lim_{\lambda\to0_+} G_\lambda^{\Omega_k}(\psi_k).
\]
Consequently, there exists a sequence \(\{\lambda_j\}_{j\in\nn}\) decreasing to \(0\) such that
\[
K \subset \bigcup_{k =4}^\infty\bigcup_{j\in\nn}\left\{x\in M:\  G_{\lambda_j}^{\Omega_k}(\psi_k)(x) > 1-\varepsilon\right\}.
\]
Since the sequence \(\{\psi_k\}_{k=4}^\infty\) is increasing, the mapping \(k \mapsto G_\lambda^{\Omega_k}(\psi_k)\) is increasing as well.
Moreover, for each fixed \(k\), the mapping \(\lambda \mapsto G_\lambda^{\Omega_k}(\psi_k)\) is increasing as \(\lambda\) decreases.
With these observations and the compactness of \(K\), there exist \(k_\varepsilon\) and \(j_\varepsilon\) such that
\[
K \subset \left\{x \in M:\ G_{\lambda_{j_\varepsilon}}^{\Omega_{k_\varepsilon}}(\psi_{k_\varepsilon})(x) > 1 - \varepsilon\right\}.
\]
Further, invoking the fact that \(G_\lambda^\Omega f \le G^\Omega f\) for all \(\lambda > 0\) and all open sets $\Omega$, we obtain
\begin{align}\label{eq-Gpsi>=1}
K \subset \left\{x \in M:\ G^{\Omega_{k_\varepsilon}}(\psi_{k_\varepsilon})(x) > 1 - \varepsilon\right\}.
\end{align}

Using \eqref{eq-psi-Cc}, \eqref{eq-Gpsi>=1}, \eqref{eq-cap}, and \eqref{eq-norm-psik-f}, we have for all \(k \ge k_\varepsilon\) that
    \begin{align*}
		\operatorname{cap}_{p_1,\,p_2}(K;\,\Omega_k)
&\le \operatorname{cap}_{p_1,\,p_2}(K;\,\Omega_{k_\varepsilon})\\
&\le \|(1-\varepsilon)^{-1}\psi_{k_{\varepsilon}}\|_{L^{p_2}(L^{p_1})(M)}^{p_2}\\
		&\le(1-\varepsilon)^{-p_2}\left(\|\psi_{k_{\varepsilon}}-f\|_{L^{p_2}(L^{p_1})(M)}
		+\|f\|_{L^{p_2}(L^{p_1})(M)}\right)^{p_2}\\ &<(1-\varepsilon)^{-p_2}\left(2\varepsilon+\left(\operatorname{cap}_{p_1,\,p_2}(K;\,\Omega)+\varepsilon\right)^{\frac 1{p_2}}\right)^{p_2}.
	\end{align*}
    Due to the arbitrariness of $\varepsilon$, we get
    $$\limsup_{k\to\infty}\,\operatorname{cap}_{p_1,\,p_2}(K;\,\Omega_k)
    \le \operatorname{cap}_{p_1,\,p_2}(K;\,\Omega),$$
        as desired.
\end{proof}

The first equivalent characterization of \(\operatorname{cap}_{p_1,\,p_2}(K;\  \Omega)\) for compact sets \(K \subset \Omega\) is as follows.

\begin{theorem}\label{thm-Ccinfty-cap}
  Let \(M = M_1 \times M_2\) and \(p_1, p_2 \in [1,\infty)\). For any open set \(\Omega \subset M\) and  compact set \(K \subset \Omega\),
\begin{align}\label{eq-cap-Ccinfty}
\operatorname{cap}_{p_1,\,p_2}(K;\ \Omega)=\inf\lf\{\|\phi\|_{L^{p_2}(L^{p_1})(M)}^{p_2}:\
	0\le \phi\in {\mathcal C}_c^\infty(\Omega)\ \ {\rm and}\ \ G^\Omega\phi\ge {\mathbf 1}_K\r\}.
    \end{align}

\end{theorem}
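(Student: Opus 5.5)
The inequality ``$\le$'' in \eqref{eq-cap-Ccinfty} is immediate. Every $0\le\phi\in{\mathcal C}_c^\infty(\Omega)$ lies in $L^{p_2}(L^{p_1})(M)$, so each admissible $\phi$ on the right is admissible in \eqref{eq-cap}. For ``$\ge$'' we may assume $\operatorname{cap}_{p_1,\,p_2}(K;\,\Omega)<\infty$. The plan is to reuse the construction in the proof of Theorem \ref{thm-approx-cap}, replacing the continuous functions $\psi_k$ there by smooth ones.

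Fix $\varepsilon\in(0,1)$ and an exhaustion $\{\Omega_k\}_{k\in\nn}$ of $\Omega$. That proof produces an index $k_\varepsilon$ and a function $0\le\psi:=\psi_{k_\varepsilon}\in{\mathcal C}_c(\Omega_{k_\varepsilon-1})$ such that:
\begin{itemize}
\item $K\subset\{G^{\Omega_{k_\varepsilon}}\psi>1-\varepsilon\}$ by \eqref{eq-Gpsi>=1};
\item $\|\psi\|_{L^{p_2}(L^{p_1})(M)}\le 2\varepsilon+(\operatorname{cap}_{p_1,\,p_2}(K;\,\Omega)+\varepsilon)^{1/p_2}$ by \eqref{eq-norm-psik-f}.
\end{itemize}
Since $G^{\Omega_{k_\varepsilon}}\psi\le G^\Omega\psi$, we get $G^\Omega\psi>1-\varepsilon$ on $K$. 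The remaining problem is that $\psi$, a finite maximum of smooth functions, is only continuous.

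To smooth it, apply Lemma \ref{lem-dense-Cc} together with the proof of \cite[Exercise~4.5]{Grigoryan2009book}, keeping supports inside $\Omega_{k_\varepsilon}$. This gives nonnegative $\phi_j\in{\mathcal C}_c^\infty(\Omega_{k_\varepsilon})$ with $\|\phi_j-\psi\|_{L^\infty(M)}\to0$ and $\|\phi_j-\psi\|_{L^{p_2}(L^{p_1})(M)}\to0$. Nonnegativity can be arranged by mollifying $\psi\ge0$ in charts with a partition of unity. We then need the uniform bound on the potentials to pass to the limit. On the precompact set $\Omega_{k_\varepsilon}$ we have $\lambda_{\min}(\Omega_{k_\varepsilon})>0$, so $G^{\Omega_{k_\varepsilon}}\mathbf 1_{\Omega_{k_\varepsilon}}$ is bounded by a constant $C_\varepsilon$. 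This follows from the standard fact that the torsion function of a precompact domain is bounded: $P_t^{\Omega_{k_\varepsilon}}\mathbf 1\le Ce^{-\lambda t}$ for large $t$, by the spectral gap and the semigroup property. Consequently
\[
\bigl|G^{\Omega_{k_\varepsilon}}\phi_j-G^{\Omega_{k_\varepsilon}}\psi\bigr|\le C_\varepsilon\|\phi_j-\psi\|_{L^\infty(M)}\to0
\]
uniformly. Pick $j$ with $C_\varepsilon\|\phi_j-\psi\|_{L^\infty}<\varepsilon$. Then $G^\Omega\phi_j\ge G^{\Omega_{k_\varepsilon}}\phi_j>1-2\varepsilon$ on $K$, and $\|\phi_j\|_{L^{p_2}(L^{p_1})(M)}\le\|\psi\|_{L^{p_2}(L^{p_1})(M)}+\varepsilon$.

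The test function $(1-2\varepsilon)^{-1}\phi_j\in{\mathcal C}_c^\infty(\Omega)$ is admissible on the right of \eqref{eq-cap-Ccinfty}. Hence the right-hand infimum is at most
\[
(1-2\varepsilon)^{-p_2}\bigl(3\varepsilon+(\operatorname{cap}_{p_1,\,p_2}(K;\,\Omega)+\varepsilon)^{1/p_2}\bigr)^{p_2},
\]
and letting $\varepsilon\to0$ gives ``$\ge$''.

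The main obstacle is the step that transfers the uniform approximation $\phi_j\to\psi$ to the potentials. If boundedness of $G^{\Omega_{k_\varepsilon}}\mathbf 1$ is awkward to cite, a fallback avoids it. Take $\phi$ smooth and nonnegative with $\phi\ge\psi$ and $\phi\le\psi+\delta\mathbf 1_{\Omega_{k_\varepsilon}}$; for instance, mollify $\psi$ and add $\delta\chi$, with $\chi\in{\mathcal C}_c^\infty(\Omega_{k_\varepsilon})$ a cutoff equal to $1$ on $\operatorname{supp}\psi$. Monotonicity of the Green operator then gives $G^\Omega\phi\ge G^\Omega\psi$ directly. The mixed norm grows by at most $\delta V_1(\cdot)^{1/p_1}V_2(\cdot)^{1/p_2}$, computed as in the proof of Lemma \ref{lem-dense-Cc}. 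This version needs no potential estimate at all.
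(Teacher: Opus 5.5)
Your proposal is correct and follows the paper's proof essentially step for step. You take the continuous function $\psi_{k_\varepsilon}$ from the proof of Theorem~\ref{thm-approx-cap}, approximate it uniformly by smooth functions supported in $\Omega_{k_\varepsilon}$, and pass to the potentials through the $L^\infty$-boundedness of $G^{\Omega_{k_\varepsilon}}$ on the precompact set $\Omega_{k_\varepsilon}$. The paper cites that bound from Grigor'yan's book, while you derive it from the spectral gap. Two points go beyond the paper. Your insistence that the smooth approximants be nonnegative settles something the paper's appeal to Lemma~\ref{lem-dense-Cc} leaves unaddressed. Your fallback, approximating $\psi_{k_\varepsilon}$ from above by a smooth $\phi\ge\psi_{k_\varepsilon}$, is a valid shortcut: monotonicity of $G^\Omega$ alone then gives $G^\Omega\phi>1-\varepsilon$ on $K$, so no potential estimate is needed.
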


\begin{proof}
The inequality \(\le\) in \eqref{eq-cap-Ccinfty} follows immediately from \eqref{eq-cap}. Thus, we only need to prove the opposite inequality \(\ge\) in \eqref{eq-cap-Ccinfty}.
Indeed, the main step has already been carried out in Theorem \ref{thm-approx-cap}.

Assume without loss of generality that \(\operatorname{cap}_{p_1,\,p_2}(K;\, \Omega)\) is finite.
Following the proof of Theorem \ref{thm-approx-cap}, we now let \(\{\Omega_i\}_{i \in \mathbb N}\) be an exhaustion sequence of \(\Omega\).
Given any \(\varepsilon > 0\), there exists a nonnegative function \(f \in L^{p_2}(L^{p_1})(M)\) such that
\[
G^\Omega f \ge \mathbf{1}_K
\]
and
\[
\| f \|_{L^{p_2}(L^{p_1})(M)}^{p_2} < \operatorname{cap}_{p_1,\,p_2}(K; \Omega) + \varepsilon.
\]
From \eqref{eq-psi-Cc}, \eqref{eq-Gpsi>=1}, and \eqref{eq-norm-psik-f},
it follows that there exists a function \(\psi_{k_\varepsilon} \in \mathcal{C}_c(\Omega_{k_\varepsilon-1})\) such that
\[
\|\psi_{k_\varepsilon} - f\|_{L^{p_2}(L^{p_1})(M)} < 2\varepsilon
\]
and
\[
K \subset \left\{ x \in \Omega_{k_\varepsilon} :\ G^{\Omega_{k_\varepsilon}}(\psi_{k_\varepsilon})(x) > 1 - \varepsilon \right\}.
\]
For any precompact open subset \(\Omega \subset M\), it is known that \(G^\Omega\) is bounded on \(L^p(\Omega)\) for all \(p \in [1,\infty]\); see \cite[Exercise~13.26]{Grigoryan2009book}.
In particular, it makes sense to set
$$a:=1+\|G^{\Omega_{k_\varepsilon}}\|_{L^\infty(\Omega_{k_\varepsilon})\to L^\infty(\Omega_{k_\varepsilon})}.$$
For \(\psi_{k_\varepsilon} \in \mathcal{C}_c(\Omega_{k_\varepsilon-1}) \subset \mathcal{C}_c(M)\), applying Lemma \ref{lem-dense-Cc} yields a function \(\phi_\varepsilon \in \mathcal{C}_c^\infty(M)\) such that
 \[
\|\phi_\varepsilon  - \psi_{k_\varepsilon}\|_{L^\infty(M)} < a^{-1}\varepsilon
\]
and
\[
\|\phi_\varepsilon  - \psi_{k_\varepsilon}\|_{L^{p_2}(L^{p_1})(M)}< \varepsilon.
\]
 Since $\Omega_{k_\varepsilon-1}\Subset \Omega_{k_\varepsilon}\Subset \Omega$, we may even assume that $\supp \phi_\varepsilon \subset\Omega_{k_\varepsilon}$.
From the first inequality, we obtain
\begin{align*}
\|G^{\Omega_{k_\varepsilon}}(\phi_\varepsilon )- G^{\Omega_{k_\varepsilon}}(\psi_{k_\varepsilon})\|_{L^\infty(M)}
&= \|G^{\Omega_{k_\varepsilon}}(\phi_\varepsilon - \psi_{k_\varepsilon})\|_{L^\infty(\Omega_{k_\varepsilon})}\\
&
\le\|G^{\Omega_{k_\varepsilon}}\|_{L^\infty(\Omega_{k_\varepsilon})\to L^\infty(\Omega_{k_\varepsilon})} \|\phi_\varepsilon  - \psi_{k_\varepsilon}\|_{L^\infty(\Omega_{k_\varepsilon})} < \varepsilon,
\end{align*}
and therefore
\[
K \subset \left\{ x \in \Omega_{k_\varepsilon} :\ G^{\Omega_{k_\varepsilon}}(\phi_\varepsilon )(x) > 1 - 2\varepsilon \right\}\subset \left\{ x \in \Omega :\ G^{\Omega}(\phi_\varepsilon )(x) > 1 - 2\varepsilon \right\}.
\]
Meanwhile, we have
\[
\|\phi_\varepsilon - f\|_{L^{p_2}(L^{p_1})(M)}
\le \|\phi_\varepsilon - \psi_{k_\varepsilon}\|_{L^{p_2}(L^{p_1})(M)}
+ \|\psi_{k_\varepsilon} - f\|_{L^{p_2}(L^{p_1})(M)} < 3\varepsilon.
\]
Consequently,
\begin{align*}
&\inf \left\{ \|\phi\|_{L^{p_2}(L^{p_1})(M)}^{p_2} :\
0 \le \phi \in \mathcal{C}_c^\infty(\Omega) \text{ and } G^\Omega \phi \ge \mathbf{1}_K \right\} \\
&\quad \le \big\|(1-2\varepsilon)^{-1} \phi_\varepsilon \big\|_{L^{p_2}(L^{p_1})(M)}^{p_2} \\
&\quad \le (1-2\varepsilon)^{-p_2}
      \left( \|\phi_\varepsilon - f\|_{L^{p_2}(L^{p_1})(M)}
            + \|f\|_{L^{p_2}(L^{p_1})(M)} \right)^{p_2} \\
&\quad < (1-2\varepsilon)^{-p_2}
      \left( 3\varepsilon + \big( \operatorname{cap}_{p_1,\,p_2}(K;\Omega) + \varepsilon \big)^{\frac 1{p_2}} \right)^{p_2}.
\end{align*}
Finally, since \(\varepsilon > 0\) is arbitrary, we conclude that
   \begin{align*}
\inf\lf\{\|\phi\|_{L^{p_2}(L^{p_1})(M)}^{p_2}:\
	0\le \phi\in {\mathcal C}_c^\infty(\Omega)\ \ {\rm and}\ \ G^\Omega\phi\ge {\mathbf 1}_K\r\}\le
\operatorname{cap}_{p_1,\,p_2}(K;\Omega),
	\end{align*}
as desired.
\end{proof}

\begin{remark}\label{rem-capK-finite}
For any compact set $K \subset \Omega$, the formula \eqref{eq-cap-Ccinfty} implies
\[
\operatorname{cap}_{p_1,\, p_2}(K;\ \Omega) < \infty.
\]
To see this, choose an arbitrary nonzero nonnegative function $\phi \in \mathcal{C}_c^\infty(\Omega)$.
If $G^\Omega \phi \equiv \infty$ on $\Omega$, then $\phi$ itself can serve as a test function in \eqref{eq-cap-Ccinfty}, thereby yielding
\[
\operatorname{cap}_{p_1,\, p_2}(K;\ \Omega) \le \|\phi\|_{L^{p_2}(L^{p_1})(M)}^{p_2} < \infty.
\]
Otherwise, if $G^\Omega \phi(x) < \infty$ for some $x \in \Omega$, then by Lemma \ref{lem-FG3.4} we have $G^\Omega \phi \in \mathcal{C}^\infty(\Omega)$.
In particular, since $K \subset \Omega$ is compact, $G^\Omega \phi$ attains a minimum value $m\in( 0,\infty)$ on $K$.
Consequently, $m^{-1}\phi$ serves as a valid test function in \eqref{eq-cap-Ccinfty}, giving
\[
\operatorname{cap}_{p_1,\, p_2}(K;\ \Omega) \le m^{-p_2} \|\phi\|_{L^{p_2}(L^{p_1})(M)}^{p_2} < \infty.
\]
Thus, in either case, we have $\operatorname{cap}_{p_1,\, p_2}(K;\ \Omega)<\infty$.
\end{remark}

As a consequence of Theorem \ref{thm-Ccinfty-cap}, we have the following duality characterization of \(\operatorname{cap}_{p_1,\,p_2}(K;\Omega)\) for compact sets \(K \subset \Omega\).

\begin{theorem}
\label{thm-cap=mu}
 Let \(M = M_1 \times M_2\) and  \(p_1, p_2 \in [1,\infty)\). For any open set \(\Omega \subset M\) and  compact set \(K \subset \Omega\),
 \begin{align}\label{eq-cap-mu}
\operatorname{cap}_{p_1,\,p_2}(K;\ \Omega)=
\sup\lf\{\mu(K)^{p_2}:\ \mu\in\cm^+(K),\ \|G^\Omega\mu\|_{L^{p_2'}(L^{p_1'})(M)}\le1\r\},
\end{align}
where $\cm^+(K)$ is the set of all positive Radon measures
supported on $K$.
 \end{theorem}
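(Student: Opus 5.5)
The identity \eqref{eq-cap-mu} is a minimax duality between the primal problem in \eqref{eq-cap-Ccinfty} and a dual problem over measures on $K$. I would prove the two inequalities separately. The inequality $\ge$ is elementary. The inequality $\le$ needs a minimax theorem, and Theorem~\ref{thm-Ccinfty-cap} lets me restrict the primal side to smooth test functions.

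\emph{Step 1: the easy direction.} Let $\mu\in\cm^+(K)$ with $\|G^\Omega\mu\|_{L^{p_2'}(L^{p_1'})(M)}\le1$, and let $0\le f\in L^{p_2}(L^{p_1})(M)$ with $G^\Omega f\ge\mathbf 1_K$. By Tonelli and the symmetry of $G^\Omega$,
$$\mu(K)\le\int_K G^\Omega f\,d\mu=\int_M f\,G^\Omega\mu\,dV\le\|f\|_{L^{p_2}(L^{p_1})(M)},$$
where the last step is the Hölder inequality \eqref{eq-Holder}. Raising to the power $p_2$ and taking the infimum over $f$ and the supremum over $\mu$ gives $\sup\le\operatorname{cap}_{p_1,p_2}(K;\Omega)$. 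This step should go through verbatim even when $p_1$ or $p_2$ equals $1$, using \eqref{eq-norm}.

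\emph{Step 2: the reverse direction via minimax.} By Theorem~\ref{thm-Ccinfty-cap},
$$\operatorname{cap}_{p_1,p_2}(K;\Omega)^{1/p_2}=\inf\Bigl\{\|\phi\|:\ 0\le\phi\in{\mathcal C}_c^\infty(\Omega),\ \min_K G^\Omega\phi\ge1\Bigr\}.$$
Homogeneity turns this into a min–max. Write $\mathcal P(K)$ for the probability measures on $K$ and set
$$A:=\sup_{\phi}\ \min_{\nu\in\mathcal P(K)}\int G^\Omega\phi\,d\nu,\qquad B:=\min_{\nu\in\mathcal P(K)}\ \sup_{\phi}\int_M \phi\,G^\Omega\nu\,dV,$$
where $\phi$ ranges over nonnegative ${\mathcal C}_c^\infty(\Omega)$ functions with $\|\phi\|_{L^{p_2}(L^{p_1})(M)}\le1$. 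Then $\operatorname{cap}^{1/p_2}=1/A$.

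The function $(\phi,\nu)\mapsto\int G^\Omega\phi\,d\nu=\int\phi\,G^\Omega\nu\,dV$ is bilinear. The set $\mathcal P(K)$ is convex and weak-* compact, and the set of admissible $\phi$ is convex. For fixed $\phi$, the map $\nu\mapsto\int G^\Omega\phi\,d\nu$ must be lower semicontinuous in the weak-* topology. This holds because $G^\Omega\phi$ is lower semicontinuous by Corollary~\ref{cor-Gf-lowersemi}, and in fact continuous by Lemma~\ref{lem-FG3.4} whenever it is finite somewhere. Given this, the Ky Fan/Sion minimax theorem gives $A=B$.

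\emph{Step 3: identifying $B$.} Lemma~\ref{lem-duality}(i) together with the density in Remark~\ref{rem-dense} shows that, since $G^\Omega\nu\ge0$, the inner supremum in $B$ equals $\|G^\Omega\nu\|_{L^{p_2'}(L^{p_1'})(M)}$. Hence
$$\frac1{\operatorname{cap}^{1/p_2}}=\min_{\nu\in\mathcal P(K)}\|G^\Omega\nu\|_{L^{p_2'}(L^{p_1'})(M)}.$$
Let $\nu_0$ attain the minimum and set $\mu=\nu_0/\|G^\Omega\nu_0\|$. Then $\mu$ is admissible and $\mu(K)^{p_2}=\operatorname{cap}_{p_1,p_2}(K;\Omega)$, which gives $\le$.

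The degenerate cases need separate treatment. If $\|G^\Omega\nu\|=\infty$ for all $\nu$, then $A=0$, so the capacity is infinite, contradicting Remark~\ref{rem-capK-finite}; hence this case cannot occur. If the capacity is $0$, the claimed inequality $\le$ is trivial.

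\emph{Main obstacle.} The delicate point is the topological hypothesis of the minimax theorem, namely lower semicontinuity in $\nu$ when $G^\Omega\phi$ may be infinite. My first attempt would be to prove the identity for each $\Omega_k$ of an exhaustion. There $G^{\Omega_k}\phi$ is bounded and continuous on $\overline{\Omega_k}$, so the pairing is continuous in $\nu$ and Sion's theorem applies directly. I would then pass to $\Omega$ using Theorem~\ref{thm-approx-cap} on the capacity side. On the measure side I would use the monotone convergence $G^{\Omega_k}\nu\uparrow G^\Omega\nu$ from \eqref{eq-GkG}, combined with weak-* compactness of the optimal measures and lower semicontinuity of the mixed norm under monotone limits (Fatou).
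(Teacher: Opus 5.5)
Your proposal is correct and follows essentially the paper's route: the paper's proof also combines the smooth-test-function formula \eqref{eq-cap-Ccinfty}, the mixed-norm duality \eqref{eq-norm}, the regularity of $G^\Omega\phi$ from Lemma~\ref{lem-FG3.4}, and the Ky Fan minimax theorem (Adams--Hedberg, Theorem~2.4.1), which is exactly your Steps~1--3. One slip in your degenerate-case remark needs fixing: if $\|G^\Omega\nu\|_{L^{p_2'}(L^{p_1'})(M)}=\infty$ for all $\nu\in\mathcal P(K)$, then $B=A=\infty$ and the capacity is $0$, so both sides of \eqref{eq-cap-mu} vanish; this case genuinely occurs (e.g.\ when $M$ is $L^{p_2}(L^{p_1})$-parabolic and $\Omega=M$), whereas the case actually excluded by Remark~\ref{rem-capK-finite} is $B=0$.
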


\begin{proof} Note that the duality relation \eqref{eq-norm} holds for the mixed-norm Lebesgue space \(L^{p_2}(L^{p_1})(M)\).
By Lemma \ref{lem-dense}, the space \({\mathcal C}_c^\infty(M)\) is dense in \(L^{p_2}(L^{p_1})(M)\).
Moreover, if \(0\le \phi\in {\mathcal C}_c^\infty(M)\), then Lemma \ref{lem-FG3.4} implies that
\(G^\Omega \phi\in {\mathcal C}^\infty(\Omega)\).

Combining these three  facts, together with \eqref{eq-cap-Ccinfty} and the Minimax theorem in  \cite[Theorem~2.4.1]{AdamsHedberg1996Book},
we follow exactly \cite[Proposition~4.6]{JinLiuWuXiao2026JGA}
and obtain  \eqref{eq-cap-mu}.
\end{proof}

\subsection{Nonlinear mixed-potential ${\mathcal G}_{p_1,p_2}(f)$}\label{sec-mix-cap-poten}

For the nonlinear mixed-potential defined in Definition~\ref{def-mix-poten}, we first examine it from the perspective of PDE theory.

\begin{proposition}\label{prop-PDE}
Let $M = M_1 \times M_2$ and $\Omega=\Omega_1\times\Omega_2$, where every $\Omega_i\Subset  M_i$ is a precompact open set.
For any \(p_1, p_2 \in (1,\infty)\) and $0\le f\in {\mathcal C}_c^\infty(\Omega)$,  the nonlinear mixed-potential
$$u={\mathcal G}_{p_1,\, p_2}^{\Omega}(f)= \int_{M_2}  \int_{M_1}\|G^\Omega (f)(\cdot,\, y_2)\|_{L^{p_1'}(M_1)}^{p_2'-p_1'} G^\Omega(x;\, y)
  \left[G^\Omega (f)(y)\right]^{p_1'-1}\,dV_{1}(y_1)\,dV_{2}(y_2)$$
solves
\begin{align*}
\varDelta_M\left( |\varDelta_M u|^{p_1-2}  \varDelta_M u \left\|\varDelta_M u(\cdot, \, y_2)\right\|_{L^{p_1}(M_1)}^{p_2-p_1}\right)=f,
\end{align*}
where $\varDelta_M$ denotes the Laplace--Beltrami operator on $M$ as defined in \eqref{eq-DeltaM}.
\end{proposition}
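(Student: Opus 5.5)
The plan is to write the mixed potential as two successive applications of $G^\Omega$ and to invert each one with Lemma~\ref{lem-FG3.4}. Set
$$v:=G^\Omega f,\qquad N(y_2):=\|v(\cdot,y_2)\|_{L^{p_1'}(M_1)},\qquad h(y):=N(y_2)^{p_2'-p_1'}\,v(y)^{p_1'-1}.$$
Then $u=G^\Omega h$.

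\textbf{Step 1: regularity and finiteness of $v$ and $N$.} Since $\Omega$ is precompact, $\lambda_{\min}(\Omega)>0$ and $G^\Omega$ is bounded on $L^\infty(\Omega)$. Hence $v$ is bounded and supported in $\overline\Omega$. Lemma~\ref{lem-FG3.4} gives $v\in\mathcal C^\infty(\Omega)$ and $\mathcal L^\Omega v=f$, that is, $-\varDelta_M v=f$ in $\Omega$.

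Because $V_1(\Omega_1)<\infty$, $N(y_2)$ is finite for every $y_2$. Then $h\ge0$ is bounded wherever $N>0$, and $G^\Omega h$ is finite.

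One subtlety concerns the set where $N(y_2)=0$. There the factor $N^{p_2'-p_1'}$ is singular if $p_2'<p_1'$, but it multiplies $v^{p_1'-1}=0$, so we use the convention $h=0$ there. One also needs $h\in L^\infty$ near the set where $N$ is small. The bound is
$$h\le N^{p_2'-p_1'}\,\|v(\cdot,y_2)\|_\infty^{p_1'-1}.$$
I expect this to be controlled by a Harnack-type comparison $\|v(\cdot,y_2)\|_\infty\lesssim N(y_2)$ on compacts, or it can simply be handled in the weak sense. This is the main technical point.

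\textbf{Step 2: invert $G^\Omega$ on $h$.} Lemma~\ref{lem-FG3.4} needs smoothness, which $h$ may lack; it is only continuous and bounded. So I will use the $L^2$ statement $G^\Omega=(\mathcal L^\Omega)^{-1}$ on $L^2(\Omega)$ instead. This gives
$$-\varDelta_M u=h\quad\text{in }\Omega,$$
in the distributional sense.

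\textbf{Step 3: algebraic identity.} Let $w:=-\varDelta_M u=h\ge0$. Then $|\varDelta_M u|^{p_1-2}\varDelta_M u=-h^{p_1-1}$. Compute the fiber norm:
$$\|h(\cdot,y_2)\|_{L^{p_1}(M_1)}=N^{p_2'-p_1'}\Bigl(\int v^{(p_1'-1)p_1}\Bigr)^{1/p_1}.$$
Since $(p_1'-1)p_1=p_1'$, this equals $N^{p_2'-p_1'}N^{p_1'/p_1}=N^{p_2'-1}$.

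Now put everything together:
$$-h^{p_1-1}\|h\|^{p_2-p_1}=-N^{(p_2'-p_1')(p_1-1)}\,v^{(p_1'-1)(p_1-1)}\,N^{(p_2'-1)(p_2-p_1)}.$$
Using $(p_1'-1)(p_1-1)=1$, the $v$-factor is $v^1$. For the $N$-exponent,
$$(p_2'-p_1')(p_1-1)+(p_2'-1)(p_2-p_1)=(p_2'-1)(p_1-1)-(p_1'-1)(p_1-1)+(p_2'-1)(p_2-p_1).$$
The first and third terms combine to $(p_2'-1)(p_2-1)=1$, and the middle term equals $-1$. So the exponent is $1-1=0$.

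Hence the expression inside the outer Laplacian is exactly $-v$, and
$$\varDelta_M(-v)=f$$
by Step 1. This proves the equation in $\Omega$. The main obstacle is justifying the regularity and weak formulation of Step 2 given the possibly singular factor $N^{p_2'-p_1'}$. I would treat the equation in the distributional sense on $\Omega$.
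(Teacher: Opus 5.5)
Your proposal is correct and follows the same route as the paper. You write $u=G^\Omega h$, use $-\varDelta_M u=h$, compute $\|h(\cdot,y_2)\|_{L^{p_1}(M_1)}=N(y_2)^{p_2'-1}$, and use $(p_1'-1)(p_1-1)=1$ and $(p_2'-1)(p_2-1)=1$ to see that the expression inside the outer Laplacian is exactly $-G^\Omega f$, to which $-\varDelta_M G^\Omega f=f$ applies.

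The paper passes over the regularity issue you flag (the factor $N^{p_2'-p_1'}$ when $p_2>p_1$) by simply invoking $\mathcal L^\Omega G^\Omega(x;\cdot)=\delta_x$. It needs no Harnack comparison: your fiber-norm identity already gives $h\in L^{p_2}(L^{p_1})(M)\subset L^1(\Omega)$. Hence $-\varDelta_M G^\Omega h=h$ holds distributionally, by approximating $h$ with $\min\{h,k\}$ and using the $L^1$-boundedness of $G^\Omega$ on the precompact set $\Omega$.
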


\begin{proof}
For simplicity, let $\varDelta:=\varDelta_M$.
By \eqref{eq-GOmega} and $0\le f\in {\mathcal C}_c^\infty(\Omega)$, we deduce from the definition of $u$ that for any $y=(y_1,y_2)\in M$,
\begin{align}\label{eq1-Deltau}
  -\varDelta  u (y) =\|G^\Omega (f)(\cdot,\, y_2)\|_{L^{p_1'}(M_1)}^{p_2'-p_1'}
  \left[G^\Omega (f)(y)\right]^{p_1'-1}.
\end{align}
Since $f\ge0$, it follows directly that
\begin{align*}
  -\varDelta  u (y)\ge 0.
  \end{align*}
  Using $p_1(p_1'-1)=p_1'$ and $(p_2'-p_1')p_1+p_1'=p_1(p_2'-1)$, we obtain
  \begin{align*}
    \int_{M_1} \left[-\varDelta  u (y)\right]^{p_1}\,dV_1(y_1)
    &= \|G^\Omega (f)(\cdot,\, y_2)\|_{L^{p_1'}(M_1)}^{(p_2'-p_1')p_1}
  \int_{M_1} \left[G^\Omega (f)(y_1, y_2)\right]^{p_1(p_1'-1)}\,dV_1(y_1)\\
  &= \|G^\Omega (f)(\cdot,\, y_2)\|_{L^{p_1'}(M_1)}^{(p_2'-p_1')p_1}
  \int_{M_1} \left[G^\Omega (f)(y_1, y_2)\right]^{p_1'}\,dV_1(y_1)\\
  &= \|G^\Omega (f)(\cdot,\, y_2)\|_{L^{p_1'}(M_1)}^{(p_2'-p_1')p_1+p_1'}\\
  &= \|G^\Omega (f)(\cdot,\, y_2)\|_{L^{p_1'}(M_1)}^{p_1(p_2'-1)},
  \end{align*}
  that is,
  $$\left\|\varDelta u(\cdot, \, y_2)\right\|_{L^{p_1}(M_1)}= \|G^\Omega (f)(\cdot,\, y_2)\|_{L^{p_1'}(M_1)}^{p_2'-1}.$$
 Inserting this last equality into \eqref{eq1-Deltau} yields
  \begin{align*}
  -\varDelta  u (y) =\left(\left\|\varDelta u(\cdot, y_2)\right\|_{L^{p_1}(M_1)}\right)^{\frac{p_2'-p_1'}{p_2'-1}}
  \left[G^\Omega (f)(y)\right]^{p_1'-1}
\end{align*}
  and, hence,
 \begin{align*}
 \left[ -\varDelta  u (y)\right]^{p_1-1}
 &=\left(\left\|\varDelta u(\cdot,  \, y_2)\right\|_{L^{p_1}(M_1)}\right)^{\frac{p_2'-p_1'}{p_2'-1}\cdot(p_1-1)}
  \left[G^\Omega (f)(y)\right]^{(p_1'-1)(p_1-1)}\\
 &= \left\|\varDelta u(\cdot, \, y_2)\right\|_{L^{p_1}(M_1)}^{p_1-p_2}
 G^\Omega (f)(y),
\end{align*}
since
 \begin{align*}
\frac{p_2'-p_1'}{p_2'-1}\cdot(p_1-1)
&= \frac{(p_2'-1)-(p_1'-1)}{p_2'-1}\cdot(p_1-1)\\
&
= (p_1-1)-(p_2-1)
=p_1-p_2.
\end{align*}
Consequently, we obtain
 \begin{align*}
 \left[ -\varDelta  u (y)\right]^{p_1-1}
\left\|\varDelta u(\cdot, \, y_2)\right\|_{L^{p_1}(M_1)}^{p_2-p_1}
=
 G^\Omega (f)(y),
\end{align*}
that is,
$$-\varDelta \left( \left[ -\varDelta  u (y)\right]^{p_1-1} \left\|\varDelta  u(\cdot,  \, y_2)\right\|_{L^{p_1}(M_1)}^{p_2-p_1}\right)=f.$$
This gives the desired conclusion since $-\varDelta  u\ge 0$.
\end{proof}

Next, we  prove an extension of \cite[Theorem 2.2.7]{AdamsHedberg1996Book} to the mixed-capacity setting.

\begin{theorem}\label{thm-equiv-mu}
Let \(M = M_1 \times M_2\),  \(\Omega \subset M\) be an open set, and \(p_1, p_2 \in (1,\infty)\).
Then, for any compact set $K\subset  \Omega$,
there exist a nonnegative function $f^K$  and a Radon measure $\mu^K\in {\mathcal M}^+(K)$ such that the following hold:
\begin{enumerate}[\rm (i)]
  \item The function $f^K$ and the measure $\mu^K$ satisfy
  \begin{align}\label{eq-x1}
f^K(x)= f^K(x_1, x_2)=  \|G^\Omega (\mu^K)(\cdot,\, x_2)\|_{L^{p_1'}(M_1)}^{p_2'-p_1'}
  \left[G^\Omega (\mu^K)(x_1, x_2)\right]^{p_1'-1}
\end{align}
and
\begin{align}\label{eq-x2}
\mu^K(K)=\|G^\Omega (\mu^K)\|_{L^{p_2'}(L^{p_1'})(M)}^{p_2'}
&= \int_M G^\Omega(f^K)\,d\mu^K\\
&=\|f^K\|_{L^{p_2}(L^{p_1})(M)}^{p_2}= \operatorname{cap}_{p_1,\,p_2}(K;\ \Omega).\notag
\end{align}

  \item For $\operatorname{cap}_{p_1,\,p_2}$-q.e. $x\in K$,
\begin{align}\label{eq-x4}
G^\Omega(f^K)(x)={\mathcal G}_{p_1,\,p_2}^\Omega(\mu^K)(x)\ge  1.
\end{align}

  \item
For any  $x\in\supp \mu^K$,
\begin{align}\label{eq-x3}
G^\Omega(f^K)(x)={\mathcal G}_{p_1,\,p_2}^\Omega(\mu^K)(x)\le  1.
\end{align}

\end{enumerate}

\end{theorem}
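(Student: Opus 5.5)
The plan is to follow the classical route of \cite[Theorem~2.2.7]{AdamsHedberg1996Book}, working with the primal extremal function and using the mixed H\"older equality case in Lemma~\ref{lem-duality}(ii) to identify it with the dual extremal measure.

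We may assume $C:=\operatorname{cap}_{p_1,\,p_2}(K;\,\Omega)>0$ and write $\mathcal{A}_K:=\{0\le f\in L^{p_2}(L^{p_1})(M):\ G^\Omega f\ge 1 \text{ q.e. on } K\}$. By Remark~\ref{rem-capK-finite}, $C<\infty$.

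\emph{Step 1: existence of an extremal function.} First I would check that $C=\inf\{\|f\|^{p_2}_{L^{p_2}(L^{p_1})(M)}:\ f\in\mathcal{A}_K\}$. This uses subadditivity (Theorem~\ref{Thm-Choquet}(iii)): the exceptional set has capacity zero, so a test function for it with tiny norm can be added. The class $\mathcal{A}_K$ is convex. It is also closed in $L^{p_2}(L^{p_1})(M)$: if $f_k\to f$, then Theorem~\ref{Thm-Choquet}(iv) gives a subsequence with $G^\Omega f_{k_i}\to G^\Omega f$ q.e., so $f\in\mathcal{A}_K$. For $1<p_1,p_2<\infty$ the space $L^{p_2}(L^{p_1})(M)$ is uniformly convex, being an $L^{p_2}$-space of $L^{p_1}$-valued functions. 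Hence there is a unique norm minimiser $f^K\in\mathcal{A}_K$ with $\|f^K\|^{p_2}=C$. This gives (ii) in the form $G^\Omega f^K\ge1$ q.e. on $K$, once we know $G^\Omega f^K=\mathcal{G}^\Omega_{p_1,\,p_2}(\mu^K)$.

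\emph{Step 2: the dual measure.} By Theorem~\ref{thm-cap=mu}, take $\mu_j\in\mathcal{M}^+(K)$ with $\|G^\Omega\mu_j\|_{L^{p_2'}(L^{p_1'})}\le1$ and $\mu_j(K)^{p_2}\to C$. Pass to a weak-$*$ limit $\nu$ on the compact set $K$. Lower semicontinuity of $\mu\mapsto G^\Omega\mu$ (the kernel is lower semicontinuous), together with Fatou, shows that $\nu$ is a maximiser. For $f\in\mathcal{A}_K$ we then have
\[
\nu(K)\le\int G^\Omega f\,d\nu=\int f\,G^\Omega\nu\,dV\le\|f\|\,\|G^\Omega\nu\|\le\|f\|.
\]
The first inequality needs $\nu$ not to charge sets of zero capacity. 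This holds because $\nu$ has finite mixed energy: if $\operatorname{cap}(E)=0$ there are $f_n$ with $G^\Omega f_n\ge1$ on $E$ and $\|f_n\|\to0$, so $\nu(E)\le\int f_n\,G^\Omega\nu\to0$. Taking $f=f^K$ forces equality throughout, i.e.\ $\nu(K)=C^{1/p_2}$. By the equality case \eqref{eq-Holder=} applied with the roles of the exponents swapped, $f^K$ is a multiple of $\|G^\Omega\nu(\cdot,x_2)\|_{L^{p_1'}(M_1)}^{p_2'-p_1'}(G^\Omega\nu)^{p_1'-1}$.

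Now rescale: set $\mu^K:=\lambda\nu$ with $\lambda$ chosen so that \eqref{eq-x1} holds exactly. The nonlinearity has total homogeneity $p_2'-1$, so $\lambda$ is determined by matching norms. The chain \eqref{eq-x2} then follows by direct computation with Fubini, using the exponent identities from the proof of Proposition~\ref{prop-PDE}. In particular,
\[
\|f^K\|^{p_2}=\|G^\Omega\mu^K\|^{p_2'}_{L^{p_2'}(L^{p_1'})}=\int f^K\,G^\Omega\mu^K\,dV=\int G^\Omega f^K\,d\mu^K,
\]
and $\int G^\Omega f^K\,d\mu^K=\mu^K(K)$ because $G^\Omega f^K\ge1$ $\mu^K$-a.e.\ while equality holds in the inequality chain. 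The identity $G^\Omega(f^K)=\mathcal{G}^\Omega_{p_1,\,p_2}(\mu^K)$ is just the definition \eqref{eq-GGf} once \eqref{eq-x1} is in hand.

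\emph{Step 3: the upper bound (iii).} This is the main obstacle. From $\int G^\Omega f^K\,d\mu^K=\mu^K(K)$ and $G^\Omega f^K\ge1$ $\mu^K$-a.e., we get $G^\Omega f^K=1$ $\mu^K$-a.e. To pass to every point of $\operatorname{supp}\mu^K$, I would argue by contradiction. Suppose $G^\Omega f^K(x_0)>1$ at some $x_0\in\operatorname{supp}\mu^K$. By Corollary~\ref{cor-Gf-lowersemi}, $G^\Omega f^K>1+\delta$ on a neighbourhood $B$ of $x_0$, and $\mu^K(B)>0$, contradicting $G^\Omega f^K=1$ $\mu^K$-a.e. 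The delicate issue is that lower semicontinuity only propagates strict lower bounds, and that is exactly the direction needed here. So the step is fine provided Step 2 really yields equality $\mu^K$-a.e. That equality, in turn, requires that $\mu^K$ does not charge the capacity-zero exceptional set in (ii), which is precisely the finite-energy argument given in Step 2.
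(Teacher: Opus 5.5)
Your proposal is correct and follows essentially the same route as the paper. Both arguments take a weak-$*$ limit of a maximising sequence of measures for \eqref{eq-cap-mu} and obtain an extremal function by minimisation. Both then identify the two through the equality case of the mixed H\"older inequality in the chain $\mu(K)\le\int G^\Omega f\,d\mu=\int f\,G^\Omega\mu\,dV\le\|f\|\,\|G^\Omega\mu\|$, rescale by $\operatorname{cap}_{p_1,p_2}(K;\Omega)^{1/p_2'}$, and use lower semicontinuity for (iii).

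The differences are only technical:
\begin{enumerate}
\item You get $f^K$ as the norm minimiser over the closed convex class of q.e.\ test functions, using uniform convexity, whereas the paper uses weak compactness together with Mazur's theorem.
\item You bound $\|G^\Omega\nu\|$ by lower semicontinuity of the kernel plus Fatou, whereas the paper uses duality against smooth test functions.
\item You show $\nu$ does not charge capacity-zero sets by a direct energy estimate, whereas the paper appeals to \eqref{eq-cap-mu}.
\end{enumerate}
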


\begin{proof}
If \(\operatorname{cap}_{p_1,\,p_2}(K;\ \Omega)=0\), then it suffices to take \(\mu^K \equiv 0\) and \(f^K \equiv 0\). Below, we assume without loss of generality that \(\operatorname{cap}_{p_1,\,p_2}(K;\ \Omega)\neq 0\).

Given \eqref{eq-x1}, a direct calculation yields
$
G^\Omega(f^K) = \mathcal{G}_{p_1, p_2}(\mu^K)
$
and the equalities
\[
\|G^\Omega(\mu^K)\|_{L^{p_2'}(L^{p_1'})(M)}^{p_2'}
= \int_M G^\Omega(f^K) \, d\mu^K
= \|f^K\|_{L^{p_2}(L^{p_1})(M)}^{p_2}
\]
in \eqref{eq-x2}. The remainder of the proof is divided into five steps.

\medskip

{\it Step 1:\, existence of a measure $\mu_0$ attaining the supremum in \eqref{eq-cap-mu}.}
By \eqref{eq-cap-mu}, there exists a sequence of Radon measures $\{\mu_n\}_{n \in \mathbb{N}} \subset \mathcal{M}^+(K)$ such that for every $n \in \mathbb{N}$,
\[
\|G^\Omega \mu_n\|_{L^{p_2'}(L^{p_1'})(M)} \le 1
\]
and
\[
\lim_{n \to \infty} \mu_n(K) = \operatorname{cap}_{p_1,\, p_2}(K;\ \Omega)^{\frac 1 {p_2}}.
\]
Since $\mathcal{C}(K)$ is separable and $\mathcal{M}^+(K) = (\mathcal{C}(K))^*$, the Banach--Alaoglu theorem implies the existence of a measure $\mu_0 \in \mathcal{M}^+(K)$ such that
\[
\mu_n \to \mu_0 \quad \text{in the weak-* topology}.
\]
Consequently,
\begin{align}\label{eq-mu0}
\mu_0(K) = \lim_{n \to \infty} \mu_n(K) = \operatorname{cap}_{p_1,\, p_2}(K;\ \Omega)^{\frac 1 {p_2}}.
\end{align}
Next, we will show that
\begin{align}\label{eq-Gmu0}
\|G^\Omega \mu_0\|_{L^{p_2'}(L^{p_1'})(M)} = 1.
\end{align}

To prove the inequality $\le$ in \eqref{eq-Gmu0}, observing that $G^\Omega \mu_0$ vanishes outside $\Omega$ and $\mathcal{C}_c^\infty(M)$ is dense in $L^{p_2}(L^{p_1})(M)$, we deduce from the duality norm equality in \eqref{eq-norm} that
\begin{align}\label{eq-norm-Gmu0}
\|G^\Omega \mu_0\|_{L^{p_2'}(L^{p_1'})(M)}
= \sup\left\{ \left| \int_M \phi \, G^\Omega \mu_0 \, dV \right|:\
\phi \in \mathcal{C}_c^\infty(\Omega), \ \|\phi\|_{L^{p_2}(L^{p_1})(M)} \le 1 \right\}.
\end{align}
Let $\phi \in \mathcal{C}_c^\infty(M)$ with $\|\phi\|_{L^{p_2}(L^{p_1})(M)} \le 1$.
By  the H\"older inequality  \eqref{eq-Holder}, we obtain
\[
\int_M G^\Omega \phi \, d\mu_n
= \int_M \phi \, G^\Omega \mu_n \, dV
\le \|\phi\|_{L^{p_2}(L^{p_1})(M)} \, \|G^\Omega \mu_n\|_{L^{p_2'}(L^{p_1'})(M)}
\le 1.
\]
In particular, this shows that $G^\Omega \phi(x) < \infty$ for some $x \in \Omega$, so  Lemma \ref{lem-FG3.4} can be applied to deduce that $G^\Omega \phi \in \mathcal{C}(M)$.  Consequently,
we have
\[
\int_M \phi \, G^\Omega \mu_0 \, dV
= \int_M G^\Omega \phi \, d\mu_0
= \lim_{n \to \infty} \int_M G^\Omega \phi \, d\mu_n
= \lim_{n \to \infty} \int_M \phi \, G^\Omega \mu_n \, dV.
\]
From this, together with \eqref{eq-Holder} and $\|G^\Omega \mu_n\|_{L^{p_2'}(L^{p_1'})(M)} \le 1$, it follows that
 \begin{align*}
\int_M \phi \, G^\Omega \mu_0 \, dV
&= \lim_{n \to \infty} \int_M \phi \, G^\Omega \mu_n \, dV\\
&
\le \lim_{n \to \infty} \|\phi\|_{L^{p_2}(L^{p_1})(M)} \, \|G^\Omega \mu_n\|_{L^{p_2'}(L^{p_1'})(M)}\\
&
\le 1.
\end{align*}
Substituting this last estimate into \eqref{eq-norm-Gmu0} yields
\[
\|G^\Omega \mu_0\|_{L^{p_2'}(L^{p_1'})(M)} \le 1,
\]
which establishes the inequality $\le$ in \eqref{eq-Gmu0}.

To obtain the inequality $\ge$ in \eqref{eq-Gmu0}, we consider the normalized measure
\[
\bar \mu_0 := \frac{\mu_0}{\|G^\Omega \mu_0\|_{L^{p_2'}(L^{p_1'})(M)}}.
\]
Since
\(
\|G^\Omega \bar \mu_0\|_{L^{p_2'}(L^{p_1'})(M)} = 1,
\)
it follows from \eqref{eq-cap-mu} and \eqref{eq-mu0} that
\[
\bar \mu_0(K) \le \operatorname{cap}_{p_1,\, p_2}(K;\ \Omega)^{\frac 1 {p_2}}= \mu_0(K).
\]
This implies the desired inequality
\[
\|G^\Omega \mu_0\|_{L^{p_2'}(L^{p_1'})(M)} \ge 1.
\]

\medskip

{\it Step 2:\,  existence of $\mu^K \in \mathcal{M}^+(K)$ satisfying the first equality in \eqref{eq-x2}.}
Define
\[
\mu^K := \operatorname{cap}_{p_1,\, p_2}(K;\ \Omega)^{\frac{1}{p_2'}} \, \mu_0,
\]
where $\mu_0$ is as in {\it Step 1}.
By \eqref{eq-mu0} and \eqref{eq-Gmu0}, we obtain
\[
\mu^K(K) = \operatorname{cap}_{p_1,\, p_2}(K;\ \Omega)^{\frac{1}{p_2'}} \, \mu_0(K) = \operatorname{cap}_{p_1,\, p_2}(K;\ \Omega)
\]
and
\[
\|G^\Omega \mu^K\|_{L^{p_2'}(L^{p_1'})(M)}^{p_2'}
= \operatorname{cap}_{p_1,\, p_2}(K;\ \Omega) \, \|G^\Omega \mu_0\|_{L^{p_2'}(L^{p_1'})(M)}^{p_2'}
= \operatorname{cap}_{p_1,\, p_2}(K;\ \Omega).
\]
Thus,
\begin{align}\label{eq-muK-cap}
\mu^K(K) = \|G^\Omega \mu^K\|_{L^{p_2'}(L^{p_1'})(M)}^{p_2'} = \operatorname{cap}_{p_1,\, p_2}(K;\ \Omega).
\end{align}

\medskip

{\it Step 3:\, existence of $f^K \in L^{p_2}(L^{p_1})(M)$ satisfying \eqref{eq-x4} and the last equality in \eqref{eq-x2}.}
By \eqref{eq-cap-Ccinfty},  there exists a  nonnegative sequence $\{\phi_n\}_{n\in\nn}\subset {\mathcal C}_c^\infty(M)$ such that for any $n\in\nn$,
$$G^\Omega\phi_n\ge {\mathbf 1}_K$$
and
$$
\operatorname{cap}_{p_1,\,p_2}(K;\ \Omega)\le \|\phi_n\|_{L^{p_2}(L^{p_1})(M)}^{p_2}
<\operatorname{cap}_{p_1,\,p_2}(K;\ \Omega)+2^{-n}.
$$
Based on Remark \ref{rem-capK-finite}, the sequence $\{\phi_n\}_{n \in \mathbb{N}}$ is bounded in $L^{p_2}(L^{p_1})(M)$. By \eqref{eq-duality} and the Banach--Alaoglu theorem, there exists a subsequence $\{\phi_{n_i}\}_{i \in \mathbb{N}}$ converging to some element, denoted by $f^K$, in the weak-$*$ topology of $L^{p_2}(L^{p_1})(M)$. Consequently, for every $h \in L^{p_2'}(L^{p_1'})(M)$,
\[
\lim_{i \to \infty} \langle \phi_{n_i},\, h \rangle = \langle f^K, \,h \rangle.
\]
In particular, this shows that $\{\phi_{n_i}\}_{i \in \mathbb{N}}$ converges to $f^K$ in the weak topology of $L^{p_2}(L^{p_1})(M)$.
Furthermore, for any $k \in \mathbb{N}$, applying the Mazur theorem (see, e.g., \cite[Theorem~3.13]{Rudin1991FAbook}) to the sequence $\{\phi_{n_i}\}_{i=k}^\infty$, we obtain a function $f_k$ which is a finite convex combination of $\{\phi_{n_i}\}_{i=k}^\infty$, i.\,e.,
\[
f_k = \sum_{i=k}^{N_k} \lambda_i^k \phi_{n_i},
\]
where $N_k \in \mathbb{N}$, each $\lambda_i^k \in [0,1]$, and $\sum_{i=k}^{N_k} \lambda_i^k = 1$, such that
\begin{align*}
\|f_k - f^K\|_{L^{p_2}(L^{p_1})(M)} < 2^{-k}.
\end{align*}
Thus,
\begin{align}\label{eq-fkfK}
\lim_{k\to \infty} \|f_k-f^K\|_{L^{p_2}(L^{p_1})(M)}
 =0.
\end{align}

Since \eqref{eq-fkfK} holds, an application of Theorem \ref{Thm-Choquet}(iv) yields a subsequence $\{f_{k_i}\}_{i \in \mathbb{N}}$ such that
\begin{align}\label{eq-Gfk-limit}
\lim_{i \to \infty} G^\Omega f_{k_i} = G^\Omega f^K \qquad \operatorname{cap}_{p_1,\, p_2}\text{-q.e. on } M.
\end{align}
Moreover, for any $k \in \mathbb{N}$, it follows from the definition of $f_k$ that
\begin{align}\label{eq-Gfk}
G^\Omega f_k = \sum_{i=k}^{N_k} \lambda_i^k \, G^\Omega \phi_{n_i} \ge \mathbf{1}_K.
\end{align}
Combining \eqref{eq-Gfk-limit} and \eqref{eq-Gfk} gives
\[
G^\Omega f^K = \lim_{i \to \infty} G^\Omega f_{k_i} \ge \mathbf{1}_K \qquad \operatorname{cap}_{p_1,\, p_2}\text{-q.e. on } M,
\]
which establishes \eqref{eq-x4}.

Next, we show the last equality in \eqref{eq-x2}, that is,
\begin{align}\label{eq-cap-fKnorm}
\operatorname{cap}_{p_1,\, p_2}(K;\ \Omega)^{\frac 1{p_2}} = \|f^K\|_{L^{p_2}(L^{p_1})(M)}.
\end{align}
Indeed, on the one hand, from \eqref{eq-fkfK} and \eqref{eq-Gfk}, we deduce
\begin{align*}
\operatorname{cap}_{p_1,\, p_2}(K;\ \Omega)^{\frac 1{p_2}}
\le \lim_{k \to \infty} \|f_k\|_{L^{p_2}(L^{p_1})(M)}
= \|f^K\|_{L^{p_2}(L^{p_1})(M)}.
\end{align*}
On the other hand, by  \eqref{eq-fkfK} and the choice of $\{\phi_n\}_{n\in\nn}$,
we have
\begin{align*}
\|f^K\|_{L^{p_2}(L^{p_1})(M)}
&= \lim_{k \to \infty} \|f_k\|_{L^{p_2}(L^{p_1})(M)} \\
&= \lim_{k \to \infty} \Bigg\| \sum_{i=k}^{N_k} \lambda_i^k \phi_{n_i} \Bigg\|_{L^{p_2}(L^{p_1})(M)} \\
&\le \lim_{k \to \infty} \sum_{i=k}^{N_k} \lambda_i^k \|\phi_{n_i}\|_{L^{p_2}(L^{p_1})(M)} \\
&= \operatorname{cap}_{p_1,\, p_2}(K;\ \Omega)^{\frac 1{p_2}}.
\end{align*}
Combining the last two estimates yields \eqref{eq-cap-fKnorm}.

\medskip

{\it Step 4:\, proving \eqref{eq-x1} and \eqref{eq-x2}.}
Let $S := \{ x \in K :\ G^\Omega f^K(x) < 1 \}.$
By \eqref{eq-x4}, we have $\operatorname{cap}_{p_1,\, p_2}(S;\, \Omega) = 0$,
which together with \eqref{eq-cap-mu} implies $\mu^K(S) = 0$.
Using this, along with \eqref{eq-muK-cap}, \eqref{eq-cap-fKnorm}, the Fubini  theorem and the H\"older inequality \eqref{eq-Holder},  we obtain
\begin{align}\label{eq-cap-GmufK}
\operatorname{cap}_{p_1,\,p_2}(K;\ \Omega)
=\mu^K(K)
&\le\int_KG^\Omega f^K\,d\mu^K\\
&\le\int_M G^\Omega f^K\,d\mu^K\notag\\
&=\int_Mf^K G^\Omega (\mu^K)\,dV\notag\\
&
\le \|G^\Omega \mu^K\|_{L^{p_2'}(L^{p_1'})(M)}
\|f^K\|_{L^{p_2}(L^{p_1})(M)}\notag\\
&=\operatorname{cap}_{p_1,\,p_2}(K;\ \Omega).\noz
\end{align}
Thus, all inequalities in \eqref{eq-cap-GmufK} must be equalities. In particular,
the equality in the H\"older inequality \eqref{eq-Holder} yields the relationship between $f^K$ and $\mu^K$ given in \eqref{eq-x1}.
Combining \eqref{eq-x1}, \eqref{eq-cap-GmufK}, \eqref{eq-muK-cap}, and \eqref{eq-cap-fKnorm}, we obtain all equalities in \eqref{eq-x2}.

\medskip

{\it Step 5:\, proving \eqref{eq-x3}.}
Suppose that $G^\Omega f^K(x_0) > 1$ for some $x_0 \in \Omega$.
Choose  $\delta \in (0,1)$ such that $G^\Omega f^K(x_0) > 1 + \delta$.
By Corollary \ref{cor-Gf-lowersemi}, the set
$\{ x \in \Omega :\ G^\Omega f^K(x) > 1 + \delta \}$ is open, so there is a neighborhood $O$ of $x_0$ such that for all $x \in O$,
\[
G^\Omega f^K(x) \ge 1 + \delta.
\]
It was shown in Step 4 that $G^\Omega f^K(x) \ge 1$ for $\mu^K$-a. e. $x \in K$.
Combining this with \eqref{eq-x2} yields
\begin{align*}
\mu^K(K) = \int_M G^\Omega f^K \, d\mu^K\ge (1 + \delta) \mu^K(O) + \mu^K(K \setminus O)\ge \delta \mu^K(O) + \mu^K(K).
\end{align*}
This implies $\mu^K(O) = 0$ and, hence, $x_0 \notin \operatorname{supp} \mu^K$.
Consequently, we obtain that $G^\Omega f^K \le 1$ on $\operatorname{supp} \mu^K$, which proves \eqref{eq-x3}.

Summarizing all, we complete the proof of Theorem~\ref{thm-equiv-mu}.
\end{proof}

\section{Mixed-parabolicity and mixed-Liouville property via Green function integrability}\label{s4}

This section is devoted to proving the following theorem, which establishes the equivalence (i) $\Leftrightarrow$ (ii) $\Leftrightarrow$ (iii) in Theorem \ref{thm-main}.

\begin{theorem}\label{thm-para-Green}
  Let \(M = M_1 \times M_2\) and \(p_1, p_2 \in (1,\infty)\). Then, the following assertions are equivalent:
\begin{enumerate}[\rm (i)]
  \item\label{mix-para} $M$ is $L^{p_2}(L^{p_1})$-parabolic.

  \item\label{G=fz-some}   For some $o\in M$ and $r_0\in (0,\infty)$,
  $$
  \left\|G^M(o;\ \cdot) {\mathbf 1}_{M\setminus B(o,\, r_0)}(\cdot)\right\|_{L^{p_2'}(L^{p_1'})(M)}=\infty.
  $$

    \item\label{G=fz-all}  For all $x\in M$ and all $r\in (0,\infty)$,
  $$
  \left\|G^M(x;\ \cdot) {\mathbf 1}_{M\setminus B(x,\, r)}(\cdot)\right\|_{L^{p_2'}(L^{p_1'})(M)}=\infty.
  $$

    \item\label{mix-Liouv} $M$ admits the \emph{\(L^{p_2'}(L^{p_1'})\)-Liouville property}.

\end{enumerate}
\end{theorem}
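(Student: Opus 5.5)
Following the scheme of Grigor'yan--Pessoa--Setti for Theorem~\ref{thm-GPS-para}, I would prove the cycle (iii)$\Rightarrow$(ii)$\Rightarrow$(i)$\Rightarrow$(iii) and then (iii)$\Leftrightarrow$(iv). The implication (iii)$\Rightarrow$(ii) is trivial. I also record a preliminary fact used repeatedly. If $G^M\equiv\infty$, then every condition holds trivially; for (iv), a nonnegative superharmonic function that is not constant would produce a finite Green function. Otherwise, for a compact $K$ with nonempty interior and $x\notin K$, the Harnack inequality applied to the harmonic function $G^M(\cdot;\,y)$ gives
\[
G^M(x;\,y)\simeq G^M(o;\,y)\quad\text{uniformly in } y\ \text{outside a large ball},
\]
and $G^M(x;\,\cdot)$ is locally in $L^{p_2'}(L^{p_1'})$ away from $x$. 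Therefore the finiteness of $\|G^M(x;\,\cdot)\mathbf 1_{M\setminus B(x,r)}\|_{L^{p_2'}(L^{p_1'})(M)}$ does not depend on $x$ or $r$.

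For \textbf{(ii)$\Rightarrow$(i)}, I argue by contrapositive via the dual formula \eqref{eq-cap-mu}. Suppose $\operatorname{cap}_{p_1,p_2}(\overline E)>0$ for some precompact open $E$. Then Theorem~\ref{thm-equiv-mu} gives a nonzero $\mu=\mu^K\in\cm^+(K)$, $K=\overline E$, with $G^M\mu\in L^{p_2'}(L^{p_1'})(M)$. The function $G^M\mu$ is positive and harmonic outside $K$. Harnack comparison then gives $G^M\mu(y)\gtrsim G^M(o;\,y)$ for $y$ outside a large ball, after comparing $G^M(z;\,y)$ with $G^M(o;\,y)$ uniformly in $z\in K$. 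Hence $G^M(o;\,\cdot)\mathbf 1_{M\setminus B(o,r_0)}\in L^{p_2'}(L^{p_1'})$, contradicting (ii).

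For \textbf{(i)$\Rightarrow$(iii)}, I again argue by contrapositive. Suppose $G^M(x;\,\cdot)\mathbf 1_{M\setminus B(x,r)}\in L^{p_2'}(L^{p_1'})$. Choose $0\le\phi\in\mathcal C_c^\infty(B(x,r))$ with $\phi\not\equiv0$, and set $\mu=\phi\,dV$. The potential $G^M\mu$ is bounded near $\supp\phi$ and is comparable to $G^M(x;\,\cdot)$ far away, so $G^M\mu\in L^{p_2'}(L^{p_1'})(M)$. Normalizing $\mu$ and inserting it into \eqref{eq-cap-mu} shows $\operatorname{cap}_{p_1,p_2}(K)>0$ for $K=\overline{B(x,r)}$. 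This contradicts (i), since $\overline{B(x,r)}$ is the closure of a precompact open set.

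For \textbf{(iii)$\Leftrightarrow$(iv)}, I treat the two directions separately.

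\emph{(iv)$\Rightarrow$(iii).} If (iii) fails, then $u=G^M(x;\,\cdot)\wedge 1$, or $\min\{G^M(x;\,\cdot),\,c\}$, is a nonconstant nonnegative superharmonic function. It belongs to $L^{p_2'}(L^{p_1'})$ because it is bounded near $x$ and equals $G^M$ far away; on the compact middle region it is bounded. This contradicts (iv).

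\emph{(iii)$\Rightarrow$(iv).} Let $u\ge0$ be superharmonic, nonconstant, with $u\in L^{p_2'}(L^{p_1'})$. By the strong minimum principle $u>0$. Since $u$ is nonconstant, the Riesz decomposition gives $u\ge G^M\nu$ for a nonzero measure $\nu=-\varDelta u$; alternatively, a nonconstant positive superharmonic function forces $G^M<\infty$ (Theorem~\ref{thm-G-para}). In the second route, standard comparison on $M\setminus\overline{B(x,r)}$ with the minimal Green potential gives
\[
u\ge c\,G^M(x;\,\cdot)\quad\text{on } M\setminus B(x,r).
\]
Concretely, $c\,G^{\Omega_k}(x;\,\cdot)\le u$ on each exhaustion domain, by the maximum principle with $c=\min_{\partial B}u/\max_{\partial B}G$, and letting $k\to\infty$ via \eqref{eq-GkG} yields the bound. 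Since the mixed norm is a lattice norm, $G^M(x;\,\cdot)\mathbf 1_{M\setminus B(x,r)}\in L^{p_2'}(L^{p_1'})$, contradicting (iii).

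The main obstacle is the Harnack-type uniform comparison $G^M\mu\simeq G^M(o;\,\cdot)$ far from $K$. It needs a chain-of-balls Harnack argument for the positive harmonic functions $y\mapsto G^M(z;\,y)$, uniform over $z\in K$ and $y$ far away. I would handle this by using the symmetry $G^M(z;\,y)=G^M(y;\,z)$ and applying the local Harnack inequality in the variable $z$ on a fixed connected neighborhood of $K$, which is uniform in $y$ outside a larger ball.
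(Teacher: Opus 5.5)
Your proposal is correct and follows essentially the same route as the paper. The ingredients match: Harnack comparison of $G^M(z;\,y)$ with $G^M(o;\,y)$ far from a compact set gives independence of $(x,r)$; the dual formula \eqref{eq-cap-mu} links integrability to capacity, and its easy half is exactly the Fubini--H\"older pairing the paper uses with $\mathbf{1}_K\,dV$; the truncation $\min\{G^M(x;\cdot),c\}$ gives (iv)$\Rightarrow$(iii); and the exhaustion/maximum-principle comparison of Lemma~\ref{lem-compari} gives (iii)$\Rightarrow$(iv). Drop the aside that nonconstancy of $u$ forces $\nu=-\varDelta u\neq 0$, which fails for a positive nonconstant harmonic $u$; your argument does not depend on it, since it rests on the comparison route.
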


\subsection{Preliminaries} \label{ss4.1}

A fundamental tool that we will repeatedly use throughout the proof of Theorem \ref{thm-para-Green} is the following comparison property for Green functions (see also \cite[Lemma~3.4]{GrigoryanPessoaSetti2025}).

\begin{lemma}\label{lem-EHIG}
Suppose that $(M, d, V)$ is a noncompact geodesically complete Riemannian manifold. Assume that $G^M\not\equiv\infty$. Then, for any ball $B(x,r)\subset M$ with $x\in M$ and $r\in(0,\infty)$, there exists a positive constant $C=C(x,r)$ such that for all $z,w\in  B(x,r)$ and $y\notin B(x,2r)$,
\begin{align}\label{eq-compa-G}
 C^{-1}G^M(z;\,y)\le G^M(w;\,y) \le C G^M(z;\,y)
\end{align}
\end{lemma}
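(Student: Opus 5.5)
The plan is to reduce \eqref{eq-compa-G} to the local Harnack inequality for positive harmonic functions. Since $G^M\not\equiv\infty$, Lemma~\ref{lem-G-property} shows that $G^M(\cdot;\,\cdot)$ is finite off the diagonal. By symmetry, $G^M(z;\,y)=G^M(y;\,z)$, since the heat kernel is symmetric. Fix $y\notin B(x,2r)$. The function $u_y(\cdot):=G^M(y;\,\cdot)=G^M(\cdot;\,y)$ is then a positive harmonic function on $M\setminus\{y\}$, and in particular on the ball $B(x,2r)$, which does not contain $y$. Its positivity follows from the strict positivity of the heat kernel on a connected manifold. So the claim is exactly a Harnack inequality on $B(x,r)$ for the family $\{u_y\}_{y\notin B(x,2r)}$, and the only point is to get a constant independent of $y$.

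First step: cover the precompact set $\overline{B(x,r)}$ by finitely many small balls $B(a_j,\rho_j)$, $j=1,\dots,N$. Choose each $\rho_j$ small enough that $B(a_j,4\rho_j)\Subset B(x,2r)$ and that $B(a_j,4\rho_j)$ lies in a coordinate chart in which the metric is uniformly elliptic with bounded coefficients. By compactness, and the geodesic completeness which guarantees that closed balls are compact, there are finitely many such balls.

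Second step: on each such ball, apply the local elliptic Harnack inequality (Moser's, or its Riemannian version, e.g.\ \cite{Grigoryan2009book}). This gives a constant $C_j$, depending only on the ball and the local geometry, such that $\sup_{B(a_j,\rho_j)}u\le C_j\inf_{B(a_j,\rho_j)}u$ for every positive harmonic $u$ on $B(a_j,4\rho_j)$. This applies uniformly to all $u_y$ with $y\notin B(x,2r)$.

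Third step: chain these estimates together. The ball $B(x,r)$ is connected, since it is a geodesic ball in a complete manifold and any point can be joined to $x$ by a minimizing geodesic inside the ball. Hence any two points $z,w\in B(x,r)$ can be joined by a chain of at most $N$ overlapping covering balls. This yields
$$u_y(w)\le C\,u_y(z),\qquad C:=\Bigl(\max_j C_j\Bigr)^{N},$$
and swapping $z$ and $w$ gives both inequalities in \eqref{eq-compa-G}.

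The main, though mild, obstacle is the uniformity in $y$ together with the chaining. Uniformity is automatic, because the Harnack constants depend only on the balls and not on the function. For the chaining, one must check that a chain of overlapping covering balls exists between any two points, which follows from connectedness of $B(x,r)$ via a standard open–closed argument on the union of balls reachable from $z$. Alternatively, one could simply cite the Harnack inequality on the connected precompact set $\overline{B(x,r)}\subset B(x,2r)$ directly, e.g.\ \cite[Lemma~3.4]{GrigoryanPessoaSetti2025}.
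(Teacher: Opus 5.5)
Your proposal is correct and follows the paper's argument. For every $y\notin B(x,2r)$, the function $G^M(\cdot;\,y)$ is nonnegative and harmonic on the same connected domain $B(x,2r)$, so the Harnack inequality on the compact set $\overline{B(x,r)}\subset B(x,2r)$ gives a constant independent of $y$. The only difference is that you rebuild that Harnack inequality from local Moser estimates by a finite covering and chaining argument, whereas the paper quotes it directly from \cite[Theorem~13.10]{Grigoryan2009book}.
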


\begin{proof}
If $G^M \not\equiv \infty$, then $G^M(x_0;\, y_0) \neq \infty$ for some $x_0, y_0 \in M$. By Lemma~\ref{lem-G-property}, this implies that $G^M(z;\, y) \neq \infty$ for arbitrary distinct $z, y \in M$.
Moreover,   $G^M(\cdot;\,y)$
is harmonic in \(M \setminus \{y\}\).

Let us recall the \emph{Harnack inequality} established in \cite[Theorem~13.10]{Grigoryan2009book}: for any compact set $K \subset \Omega \setminus \{x\}$, there exists a positive constant $C(K)$, depending only on \(K\) and the geometry of \(\Omega\), such that for any nonnegative harmonic function \(f\) on \(\Omega\),
\begin{equation}\label{eq-EHI-G}
\sup_{z \in K} f(z) \le C \inf_{z \in K} f(z).
\end{equation}
Then, inequality \eqref{eq-compa-G} follows from \eqref{eq-EHI-G} by taking \(f(z) = G^M(z;\, y)\) and letting \(K\) be the closure of the geodesic ball \(B(x,r)\).
\end{proof}

The following minimum/maximum principle is taken from \cite[Lemma~4.1]{GrigoryanHu2014CJM}, which was proved in the abstract framework of metric measure spaces. The Euclidean case $M = \mathbb{R}^n$ appears in \cite[Theorem~8.1]{GilbargTrudinger2001book}. For Riemannian manifolds with $u \in C(\overline{\Omega}) \cap C^2(\Omega)$, we refer to \cite[Corollaries~8.14 and~8.16]{Grigoryan2009book}.

\begin{lemma}\label{EMinP}
(Minimum/Maximum principle) Let $(M,d, V)$ be a noncompact geodesically complete Riemannian manifold. Suppose that $\Omega, \Omega_1,\Omega_2$ are precompact open sets  such that $$\Omega_1\Subset \Omega\Subset \Omega_2\Subset M.$$
If $u\ge 0$ on $M$ and $u\in W^{1,2}(\Omega)$ is  superharmonic (resp. subharmonic) in $\Omega$, then
  $$
\einf_{z \in \Omega} u(z) \ge  \einf_{z \in \Omega_2\setminus \Omega_1} u(z)
  \qquad
  \left(\text{resp.} \quad \esup_{z \in \Omega} u(z) \le  \esup_{z \in \Omega_2\setminus \Omega_1} u(z)\right).
  $$
  Moreover, if $u$ is continuous in a neighborhood of $\partial\Omega:=\overline\Omega\setminus\Omega$, then the above inequalities can
be replaced by
 $$
\einf_{z \in \overline\Omega} u(z) =  \inf_{z \in \partial\Omega} u(z)
  \qquad
  \left(\text{resp.} \quad\esup_{z \in \overline\Omega} u(z) =  \sup_{z \in \partial\Omega} u(z)\right).
  $$
\end{lemma}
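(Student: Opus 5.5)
The plan is to prove the superharmonic case with a truncation (Stampacchia-type) argument. The subharmonic case then follows by the symmetric argument, and the boundary version follows by a limiting procedure.

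\textbf{Superharmonic case.} Set $m:=\einf_{z\in\Omega_2\setminus\Omega_1}u(z)\in[0,\infty]$; if $m=0$ there is nothing to prove, because $u\ge0$. Assume first $m<\infty$ (if $m=\infty$, run the argument below with any finite $m'<m$ and let $m'\to\infty$), and consider
$$w:=(m-u)_+ .$$
Since $\Omega\setminus\Omega_1\subset\Omega_2\setminus\Omega_1$, we have $u\ge m$ a.e. on $\Omega\setminus\Omega_1$, so $w=0$ a.e. on $\Omega\setminus\Omega_1$. Since $\Omega_1\Subset\Omega$, this means $w$ vanishes on a neighbourhood (inside $\Omega$) of $\partial\Omega$.

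\emph{Key step.} Show that $w$, extended by $0$ outside $\Omega$, lies in $W^{1,2}_0(\Omega)$, with $\nabla w=-\mathbf 1_{\{u<m\}}\nabla u$. This uses $u\in W^{1,2}(\Omega)$, the chain rule for Lipschitz truncations, and multiplication by a cutoff $\eta\in{\mathcal C}_c^\infty(\Omega)$ with $\eta=1$ on $\overline{\Omega_1}$, which does not change $w$. Next, the superharmonicity inequality $\int\nabla u\cdot\nabla\phi\,dV\ge0$ holds for nonnegative $\phi\in{\mathcal C}_c^\infty(\Omega)$. It extends to nonnegative $\phi\in W^{1,2}_0(\Omega)$ by approximation: choose nonnegative smooth compactly supported approximants, e.g.\ mollify $\phi_+$ in charts with a partition of unity. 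Testing with $\phi=w$ then gives
$$0\le\int_\Omega\nabla u\cdot\nabla w\,dV=-\int_\Omega|\nabla w|^2\,dV ,$$
so $\nabla w=0$. Because $\Omega$ is precompact, $\lambda_{\min}(\Omega)>0$, so $\|w\|_{L^2}^2\le\lambda_{\min}(\Omega)^{-1}\|\nabla w\|_{L^2}^2=0$. Hence $w=0$, that is, $u\ge m$ a.e.\ in $\Omega$.

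\textbf{Subharmonic case.} Put $\mathcal M:=\esup_{\Omega_2\setminus\Omega_1}u$. If $\mathcal M=\infty$ the claim is trivial. Otherwise test with $(u-\mathcal M)_+\in W^{1,2}_0(\Omega)$ in exactly the same way.

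\textbf{Boundary version.} Assume $u$ is continuous on a neighbourhood $U$ of $\partial\Omega$.
\begin{itemize}
\item For the inequality ``$\le$'': every point of $\partial\Omega$ is a limit of points of $\Omega$. Continuity then gives $\einf_{\overline\Omega}u\le u(\xi)$ for each $\xi\in\partial\Omega$, because $u>u(\xi)-\varepsilon$ holds only near $\xi$, on a set of positive measure inside $\Omega$.
\item For the inequality ``$\ge$'': choose precompact open sets $\Omega_1^{(j)}\Subset\Omega\Subset\Omega_2^{(j)}$ with $\overline{\Omega_2^{(j)}}\setminus\Omega_1^{(j)}\subset U$ shrinking to $\partial\Omega$. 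Since $\partial\Omega$ is compact and $u$ is uniformly continuous near it, $\einf_{\Omega_2^{(j)}\setminus\Omega_1^{(j)}}u\to\inf_{\partial\Omega}u$. Applying the first part for each $j$ and letting $j\to\infty$ gives the claim.
\end{itemize}
The maximum version is analogous.

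\textbf{Main obstacle.} The delicate point is the membership $w\in W^{1,2}_0(\Omega)$ together with the extension of the test class to nonnegative $W^{1,2}_0(\Omega)$ functions. I would handle it via the cutoff-and-truncation argument described in the key step.
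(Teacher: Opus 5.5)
The paper gives no proof of its own for this lemma, only citations; your Stampacchia-type truncation proof is correct and is essentially the standard argument behind them. In particular, Gilbarg--Trudinger's Theorem~8.1 is proved by testing with $(u-k)^+\in W^{1,2}_0(\Omega)$ in exactly the same way. The Dirichlet-form version in Grigor'yan--Hu likewise rests on testing against the truncated part and then using $\lambda_{\rm min}(\Omega)>0$.

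One slip needs fixing, in the ``$\le$'' half of the boundary version. The justification should say that $u<u(\xi)+\varepsilon$ on a neighbourhood of $\xi$, and that this neighbourhood meets $\Omega$ in a nonempty open set of positive measure, so $\einf_{\overline\Omega}u\le u(\xi)+\varepsilon$. As written (``$u>u(\xi)-\varepsilon$ holds only near $\xi$''), the inequality points the wrong way.
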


\begin{lemma}
(The exterior maximum principle) Suppose that $(M,d, V)$ is a noncompact geodesically complete Riemannian manifold. Then, for any $r\in(0,\infty)$ and $x\in M$,
\begin{equation}\label{eq-EMP-Gmax}
\sup_{y \in M\setminus B(x,\,r)} G^M(x;\, y) =  \sup_{y \in \partial B(x,\,r)} G^M(x;\, y).
\end{equation}

\end{lemma}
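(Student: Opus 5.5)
We prove the exterior maximum principle \eqref{eq-EMP-Gmax} by exhausting the exterior region and applying the maximum principle on annuli. If $G^M\equiv\infty$, both sides of \eqref{eq-EMP-Gmax} equal $\infty$ and there is nothing to prove. Otherwise, by Lemma~\ref{lem-G-property}, $G^M(x;\,\cdot)$ is finite, smooth and harmonic on $M\setminus\{x\}$. Since $\partial B(x,r)\subset M\setminus B(x,r)$, the inequality ``$\ge$'' is trivial. It remains to show that
\[
G^M(x;\,y)\le m:=\sup_{\partial B(x,r)}G^M(x;\,\cdot) \quad \text{for all } y\notin B(x,r).
\]
Here $m<\infty$ by continuity on the compact set $\partial B(x,r)$.

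The key step is to pass to an exhaustion. Let $\{\Omega_k\}_{k\in\nn}$ be an exhaustion sequence of $M$ with $\overline{B(x,r)}\subset\Omega_k$. By \eqref{eq-GkG}, $G^{\Omega_k}(x;\,\cdot)\uparrow G^M(x;\,\cdot)$. Each $u_k:=G^{\Omega_k}(x;\,\cdot)$ is harmonic in $\Omega_k\setminus\{x\}$ by \eqref{eq-GOmega}, continuous up to the smooth boundary $\partial\Omega_k$, and vanishes there; these are standard properties of the Dirichlet Green function on a precompact smooth domain. We apply the classical maximum principle (Lemma~\ref{EMinP}, in the continuous form, or \cite[Corollary~8.14]{Grigoryan2009book}) to $u_k$ on the precompact open set $U_k:=\Omega_k\setminus\overline{B(x,r)}$. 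Its boundary is contained in $\partial B(x,r)\cup\partial\Omega_k$, so
\[
\sup_{U_k}u_k\le\max\Bigl\{\sup_{\partial B(x,r)}u_k,\ 0\Bigr\}\le m,
\]
where we used $u_k\le G^M(x;\,\cdot)$. Outside $\Omega_k$ we have $u_k=0$. Hence $u_k\le m$ on all of $M\setminus B(x,r)$. Letting $k\to\infty$ then gives $G^M(x;\,y)\le m$ for $y\notin B(x,r)$, which is the desired inequality.

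The main obstacle is justifying boundary continuity of $u_k$ at $\partial\Omega_k$, with $u_k\to0$ there, so that the maximum principle applies on $U_k$. Two routes are available. First, we can take $\Omega_k$ with smooth boundary and invoke regularity of Dirichlet Green functions in smooth bounded domains. Alternatively, we can avoid boundary regularity by using the weak form: $(u_k-m)_+$ belongs to $W^{1,2}_0(U_k)$ modulo the inner boundary, and subharmonicity of $u_k$ forces $(u_k-m)_+=0$. If neither is convenient, a fallback is to compare $G^M(x;\,\cdot)$ directly with the superharmonic function $m$ using the minimality of the Green function in Lemma~\ref{lem-G-property}: the function $\min\{G^M(x;\,\cdot),m\}$, extended by $G^M$ inside $B(x,r)$, is a positive fundamental solution candidate bounded by $G^M$, and minimality then forces equality outside $B(x,r)$.
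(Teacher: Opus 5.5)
Your proposal is correct and follows essentially the same route as the paper: exhaust $M$ by smooth precompact $\Omega_k$, apply the maximum principle (Lemma~\ref{EMinP}) to $G^{\Omega_k}(x;\,\cdot)$ on $\Omega_k\setminus\overline{B(x,r)}$, and let $k\to\infty$ using \eqref{eq-GkG}. Your version is slightly cleaner than the paper's in two respects: you bound the values on $\partial B(x,r)$ directly by $\sup_{\partial B(x,r)}G^M(x;\,\cdot)$ via $G^{\Omega_k}\le G^M$, so no interchange of limit and supremum is needed, and you treat the case $G^M\equiv\infty$ explicitly; the minimality fallback is unnecessary.
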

\begin{proof}

By \eqref{eq-GkG}, the Green function $G^M(x;\,y)$ can be obtained by the limit of Dirichlet kernel $G^{\Omega_k}(x;\,y)$ with $\Omega_k$ being an exhaustion of $M$, and by Lemma ~\ref{EMinP}, we have
\begin{align*}
\sup_{y \in \Omega_k\setminus B(x,\,r)} G^{\Omega_k}(x;\, y) =  \sup_{y \in \partial B(x,\,r)} G^{\Omega_k}(x;\, y)
\end{align*}
Letting $k\to\infty$ yields 
\begin{align*}
\sup_{y \in M\setminus B(x,\,r)} G^M(x;\, y) =\lim_{k\to\infty} \sup_{y \in \Omega_k\setminus B(x,\,r)} G^{\Omega_k}(x;\, y)=  \sup_{y \in \partial B(x,\,r)} G^M(x;\, y),
\end{align*}
which completes the proof of \eqref{eq-EMP-Gmax}.
\end{proof}

\begin{lemma}(Comparison principle) \label{lem-compari}
Let $(M,d, V)$ be a noncompact geodesically complete Riemannian manifold and $G^M\not\equiv\infty$.  Let $ f$ be a nonzero nonnegative superharmonic function in $M$.
  For any $x\in M$, $r\in (0,\infty)$ and precompact open set $\Omega$ satisfying $B(x,r)\Subset  \Omega\Subset  M$, there
  exists a constant $C=C(f,x,r)$ such that
\begin{align}\label{eq-G-comp-local}
G^\Omega(x;\,y)\le C f(y)\quad\text{for a.e.}\ \, y\in \Omega\setminus{\overline{B(x,r)}}.
\end{align}
Consequently,  for any $x\in M$ and  $r\in (0,\infty)$, there
  exists a constant $C=C(x,r)$ such that
\begin{align}\label{eq-G-comp}
G^M(x;\,y)\le  C f(y)\quad\text{for a.e.}\ \, y\in M\setminus{\overline {B(x,r)}}.
\end{align}
\end{lemma}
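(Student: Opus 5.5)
The plan is to prove \eqref{eq-G-comp-local} with the minimum principle (Lemma~\ref{EMinP}) on the annular region $\Omega\setminus\overline{B(x,r)}$, and then to obtain \eqref{eq-G-comp} by letting the domain exhaust $M$.

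First, the positive lower bound for $f$. Since $f\ge0$, $f\not\equiv0$ and $f$ is superharmonic on the connected manifold $M$, the strong minimum principle, in the form of the weak Harnack inequality for nonnegative supersolutions, gives $\einf_{K} f>0$ for every compact $K\subset M$. We apply this with $K=\overline{\Omega}$. Fix an intermediate radius $r'<r$ and set $m:=\einf_{\overline{\Omega}} f>0$.

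Second, the upper bound for the Green function near the sphere. By Lemma~\ref{lem-G-property}, $G^\Omega(x;\cdot)\le G^M(x;\cdot)$ is finite and smooth on $\overline{\Omega}\setminus\{x\}$. Since $G^M\not\equiv\infty$, we have $A:=\sup_{\partial B(x,r)} G^M(x;\cdot)<\infty$ by compactness, and the exterior maximum principle \eqref{eq-EMP-Gmax} gives $G^\Omega(x;y)\le G^M(x;y)\le A$ for all $y\notin B(x,r)$.

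Third, the comparison. Set $C:=A/m$ and $u:=Cf-G^\Omega(x;\cdot)$. On $U:=\Omega\setminus\overline{B(x,r)}$ the function $u$ is superharmonic, because $G^\Omega(x;\cdot)$ is harmonic there. Near $\partial B(x,r)$ we have $u\ge Cm-A\ge0$ a.e. Near $\partial\Omega$, $G^\Omega(x;\cdot)$ vanishes in the $W^{1,2}_0$ sense while $Cf\ge0$. So $u^-$ has zero boundary values in the Sobolev sense. Testing the superharmonicity inequality with $u^-$, after a routine truncation since $u^-$ is only locally Sobolev, should give $\nabla u^-=0$, hence $u^-=0$. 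Alternatively one can use the weak minimum principle in the form of Lemma~\ref{EMinP}, applied on slightly shrunken domains. Either way this yields \eqref{eq-G-comp-local}. The main obstacle is this boundary step: the proof must justify that $G^\Omega(x;\cdot)$ has zero trace on $\partial\Omega$, which follows from \eqref{eq-GOmega} and the definition of $\mathcal L^\Omega$ on $W^{1,2}_0(\Omega)$, possibly after smoothing $\partial\Omega$ via an exhaustion, and must handle $f$ being only a.e.-defined.

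Finally, the global statement. The constant $C=A/m$ depends on $\Omega$ through $m$, so a naive exhaustion would lose uniformity. To avoid this, we bound $C$ using only $\einf$ of $f$ near $\partial B(x,r)$, taken over a fixed compact shell $\overline{B(x,2r)}\setminus B(x,r/2)$, together with $\sup_{\partial B(x,r)}G^M$; neither depends on $\Omega$. Concretely, we run the comparison on $\Omega\setminus\overline{B(x,r)}$, where only a lower bound on a neighborhood of the inner boundary is needed, since on the outer boundary $G^\Omega=0\le Cf$. Thus $C$ is independent of $\Omega$. Taking an exhaustion $\Omega_k\uparrow M$ and using $G^{\Omega_k}\uparrow G^M$ from \eqref{eq-GkG} then yields \eqref{eq-G-comp} with the same constant.
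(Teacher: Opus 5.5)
Your proof is correct, but it handles the low regularity of $f$ differently from the paper.

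\emph{The paper's route.} It replaces $f$ by $P_t^M\min\{f,k\}\le f$. This function is still superharmonic (via the characterization $P_s^Mf\le f$), and it is bounded, continuous, and strictly positive because $p_t^M>0$. The paper then takes $C=\max_{\partial B(x,r)}G^M(x;\cdot)/f$, which is already independent of $\Omega$, and applies the continuous form of Lemma~\ref{EMinP} to $Cf-G^\Omega(x;\cdot)$ on $\Omega\setminus\overline{B(x,r)}$.

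\emph{Your route.} You keep $f$ itself and get $\einf f>0$ on a fixed shell around $\partial B(x,r)$ from the local weak Harnack inequality. You then use \eqref{eq-EMP-Gmax} to bound $G^\Omega(x;\cdot)\le A$ on all of $M\setminus B(x,r)$. Hence $Cf\ge G^\Omega(x;\cdot)$ a.e.\ on a whole neighborhood of the inner boundary, and a Sobolev minimum principle (testing with $u^-$) finishes.

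\emph{What each buys.} The paper needs only semigroup facts and a classical maximum principle, but its pointwise boundary comparison on $\partial\Omega$ is most natural for smooth $\Omega$. Yours needs De Giorgi--Moser theory, but it works for arbitrary precompact $\Omega$ and for $f$ defined only a.e.

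\emph{The step you must supply.} You need $\psi\,G^\Omega(x;\cdot)\in W^{1,2}_0(\Omega)$ for a cutoff $\psi$ vanishing near $x$ and equal to $1$ off $B(x,r)$. This does not follow directly from \eqref{eq-GOmega}, since $\delta_x\notin L^2$. One way to get it:
\begin{enumerate}
\item Take $\phi_n\in\mathcal C_c^\infty(B(x,r/4))$ approximating $\delta_x$, and use $G^\Omega\phi_n\in W^{1,2}_0(\Omega)$.
\item Bound $\|\psi G^\Omega\phi_n\|_{W^{1,2}}$ uniformly: test the equation with $\psi^2G^\Omega\phi_n$, and use that $G^\Omega\phi_n$ is uniformly bounded off $B(x,r/2)$.
\item Pass to a weak limit.
\end{enumerate}
Then $0\le u^-\le\psi G^\Omega(x;\cdot)$ gives $u^-\in W^{1,2}_0$. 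Finally, $\nabla u^-=0$ together with $\lambda_{\min}(\Omega)>0$ forces $u^-=0$.

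\emph{Two minor remarks.} First, no truncation is needed, since $f\in W^{1,2}(\Omega)$ for precompact $\Omega$. Second, with your first choice $m=\einf_{\overline\Omega}f$, Step 3 is vacuous: Steps 1--2 already give $G^\Omega(x;y)\le A\le (A/m)f(y)$ a.e.\ on $\Omega\setminus\overline{B(x,r)}$. The minimum principle is needed only to obtain the $\Omega$-independent constant that the exhaustion requires.
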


\begin{proof}
If $x$ and $y$ lie in different connected components of $M$, then $G^M(x;\,y)=0$ and, hence, both \eqref{eq-G-comp-local} and \eqref{eq-G-comp} hold. Thus, we assume without loss of generality that $M$ itself is connected.

According to \cite[Lemma~6.1]{GrigoryanHu2014CJM} or \cite[Exercises~7.19 and 7.30]{Grigoryan2009book},
any nonnegative function $f$ is superharmonic in $M$ if
and only if $P_t^Mf\le f$ in $M$ for all $t\in(0,\infty)$.

Let $ f$ be a nonzero nonnegative superharmonic function in $M$. For any $k\in\nn$, the function $f_k:=\min\{f,k\}$ is also superharmonic in \(M\). This follows from the above fact of \cite[Lemma~6.1]{GrigoryanHu2014CJM} since
$$P_t^Mf_k \le P_t^M f\le f\quad \text{and}\quad P_t^Mf_k \le k P_t^M 1\le k.$$
Further, for any $s,t\in(0,\infty)$, by the semigroup property and
the superharmonicity of $f_k$,  we have
$$
P_s^M (P_t^M f_k) = P_{t+s}^M f_k = P_t^M (P_s^M f_k) \le P_t^M f_k.
$$
Thus, for any $t\in(0,\infty)$, the function $P_t^Mf_k$ remains  superharmonic in $M$.

On a connected manifold, the heat kernel $p_t^M(x,y)$ is strictly positive  for all $t\in(0,\infty)$ and $x,y\in M$ (see \cite[Corollary~8.12]{Grigoryan2009book}). Consequently, $P_t^M f_k(x) > 0$ for every $x\in M$; otherwise this would contradict the assumption that $f$ is nonzero on $M$.

If $f$ is  bounded   on $M$, then  $P_t^M f \in {\mathcal C}^\infty(M)$ (see \cite[Exercise~7.33]{Grigoryan2009book}).
Therefore, by replacing $f$ with $P_t^M f_k$, we may assume without loss of generality that $f$ is bounded, continuous, strictly positive, and superharmonic on $M$.

Under the above reductions, we now proceed to prove \eqref{eq-G-comp-local}.  On the outer boundary \(\partial\Omega\), we have for every \(z \in \partial\Omega\),
\[
G^{\Omega}(x;\,z) = 0 < f(z).
\]
On the inner boundary \(\partial B(x,\,r)\), we use the strict positivity of $f$, together with the continuity of the mappings \(z \mapsto G^{\Omega}(x;\,z)\) and \(z \mapsto f(z)\). Note that $G^M\not\equiv\infty$, which implies
\[
C=C(f,x,r) := \max_{z \in \partial B(x,\,r)} \frac{G^{M}(x;\,z)}{f(z)}\in (0,\infty).
\]
Thus,  for all \(z \in \partial B(x,\,r)\), we have
\[
G^{\Omega}(x;\,z) \le G^{M}(x;\,z) \le C \, f(z).
\]
Since the function \(G^{\Omega}(x;\,\cdot)\) is harmonic and \(f\) is superharmonic on \(\Omega \setminus \overline{B(x,r)}\), it follows that
\[
\cl^{\Omega}\left(G^{\Omega}(x;\,\cdot)\right)=0\le \cl^{\Omega}\left(Cf\right) \quad \text{in} \ \, \Omega \setminus \overline{B(x,r)}.
\]
In other words, the function $Cf(\cdot)-G^{\Omega}(x;\,\cdot)$ is superharmonic in $\Omega \setminus \overline{B(x,r)}$.
From this and Lemma ~\ref{EMinP}, it follows that
$$
\einf_{z \in \Omega \setminus \overline{B(x,\,r)}} \left(Cf(z)-G^{\Omega}(x;\,z)\right) =  \inf_{z \in \partial\Omega \cup \partial B(x,\,r)} \left(Cf(z)-G^{\Omega}(x;\,z)\right)\ge 0,
$$
which gives \eqref{eq-G-comp-local}.

If we have obtained \eqref{eq-G-comp-local}, then
\eqref{eq-G-comp} follows by applying Lemma~\ref{lem-G-property} together with the exhaustion limit \eqref{eq-GkG}.
\end{proof}

\subsection{Proof of (\ref{mix-para}) $\Leftrightarrow$ (\ref{G=fz-some}) in Theorem \ref{thm-para-Green}\,} \label{ss4.2}

\begin{proof}[Proof of \eqref{mix-para} $\Rightarrow$ \eqref{G=fz-some} in Theorem \ref{thm-para-Green}]

Suppose that \eqref{G=fz-some} fails, that is, for every $x \in M$ and every $r \in (0, \infty)$,
\begin{align}\label{eq-G-int}
G^M(x; \,\cdot\,) \mathbf{1}_{M \setminus B} \in L^{p_2'}(L^{p_1'})(M),
\end{align}
with $B:=B(x,r)$.
We will then show that \eqref{mix-para} fails, meaning that $M$ is not $L^{p_2}(L^{p_1})$-parabolic.
To achieve this, it suffices to show that for any compact set $K \subset M$ with $V(K) > 0$, we always have
\[
\operatorname{cap}_{p_1,\, p_2}(K) > 0.
\]
This is equivalent to proving the following implication:
\begin{align}\label{eq-cap=0=>mu=0}
\operatorname{cap}_{p_1,\, p_2}(K) = 0 \quad \Rightarrow \quad V(K) = 0.
\end{align}

Suppose that $K \subset M$ is a non-empty compact set with
$\operatorname{cap}_{p_1,\, p_2}(K) = 0$. By \eqref{eq-cap-Ccinfty}, for any $\varepsilon \in (0,1)$,
there exists some nonnegative $ f \in \mathcal{C}_c^\infty(M)$ such that
$
G^M f \ge \mathbf{1}_K $ and
$\|f\|_{L^{p_2}(L^{p_1})(M)} < \varepsilon$.
Then, applying the Fubini theorem and \eqref{eq-Holder} yields
\begin{align}\label{eq-muK}
V(K) \le \int_K G^Mf \,dV
&= \int_{M} (G^M\mathbf{1}_K)(y) \, f(y) \, dV(y) \\
&\le \bigl\| G^M\mathbf{1}_K \bigr\|_{L^{p_2'}(L^{p_1'})(M)} \;
     \| f \|_{L^{p_2}(L^{p_1})(M)} < \varepsilon \; \bigl\| G^M\mathbf{1}_K \bigr\|_{L^{p_2'}(L^{p_1'})(M)}. \notag
\end{align}
Once we have established
\begin{align}\label{eq-G1K-fin}
\bigl\| G^M\mathbf{1}_K \bigr\|_{L^{p_2'}(L^{p_1'})(M)} < \infty,
\end{align}
then letting $\varepsilon \to 0$ in \eqref{eq-muK} yields $V(K) = 0$.
This will complete the proof of \eqref{eq-cap=0=>mu=0}.

It remains to prove \eqref{eq-G1K-fin}.
To this end, we take a large geodesic ball $B$ in $M$ such that $K \subset B$. Write
\begin{align}\label{eq-G1K}
\bigl\|G^M\mathbf{1}_K\bigr\|_{L^{p_2'}(L^{p_1'})(M)}
&\le \bigl\|(G^M\mathbf{1}_K) \mathbf{1}_{M\setminus 2B}\bigr\|_{L^{p_2'}(L^{p_1'})(M)}
+ \bigl\|(G^M\mathbf{1}_K) \mathbf{1}_{2B}\bigr\|_{L^{p_2'}(L^{p_1'})(M)}
\end{align}
and we will show that each term is finite.

Denote by $o$ the center of the ball $B$.
By \eqref{eq-compa-G}, there exists a positive constant $C$, depending only on $B$, such that for any $z \in K \subset B$ and $x = (x_1, x_2) \in M \setminus 2B$, we have
\begin{align*}
C^{-1} G^M(x;\ o) \le G^M(x;\ z) \le C G^M(x;\ o).
\end{align*}
Using this and \eqref{eq-G-int}, we obtain
\begin{align}\label{eq-G1K-far}
&\lf\|(G^M{\mathbf 1}_K) {\mathbf 1}_{M\setminus 2B}\r\|_{L^{p_2'}(L^{p_1'})(M)}\\
&\quad =\lf[\int_{M_2}\lf(\int_{M_1}{\mathbf 1}_{M\setminus 2B}(x_1, x_2)\lf[\int_K G^M(x_1, x_2;\,z)\,dV(z)\r]^{p_1'}
\,dV_{1}(x_1)\r)^{\frac{p_2'}{p_1'}}\,dV_{2}(x_2)\r]^{\frac1{p_2'}}\notag\\
&\quad \le C V(K) \lf[\int_{M_2}\lf(\int_{M_1}{\mathbf 1}_{M\setminus 2B}(x_1, x_2)\lf[ G^M(x_1, x_2;\,o)\r]^{p_1'}
\,dV_{1}(x_1)\r)^{\frac{p_2'}{p_1'}}\,dV_{2}(x_2)\r]^{\frac1{p_2'}}\notag\\
&\quad = C V(K) \lf\|G^M(\cdot;\ o) \mathbf{1}_{M \setminus 2B}\r\|_{L^{p_2'}(L^{p_1'})(M)}\notag\\
&\quad<\infty. \notag
\end{align}
Next, by \eqref{eq-dg=dinfty}, we may even assume that $2B \subset B_1 \times B_2$, where each $B_i$ is a geodesic ball in $M_i$.
Choose a nonnegative function $\phi \in \mathcal{C}_c^\infty(M)$ such that $\phi \equiv 1$ on $B$.
By \eqref{eq-G-int}, there exists at least one $y \in M$ for which $G^M(x;\, y) < \infty$.
From this and Lemma~\ref{lem-FG3.4}, it follows that $G^M\phi \in \mathcal{C}^\infty(M)$. Consequently,
\[
C := \sup_{x \in \overline{B_1} \times \overline{B_2}} G^M\phi(x) < \infty.
\]
From these facts, we deduce
\begin{align}\label{eq-G1K-near}
\lf\|(G^M{\mathbf 1}_K) {\mathbf 1}_{2B}\r\|_{L^{p_2'}(L^{p_1'})(M)}
&\le C\lf\| {\mathbf 1}_{B_1 \times B_2}\r\|_{L^{p_2'}(L^{p_1'})(M)}= C V_{1}(B_1)^\frac1{p_1'} V_{2}(B_2)^\frac1{p_2'} <\infty.
\end{align}
Substituting \eqref{eq-G1K-far} and \eqref{eq-G1K-near} into \eqref{eq-G1K} yields
\eqref{eq-G1K-fin}.  Thus, we obtain \eqref{eq-cap=0=>mu=0}, which completes the proof of the implication \eqref{mix-para} $\Rightarrow$ \eqref{G=fz-some}.
\end{proof}

\begin{proof}[Proof of \eqref{G=fz-some} $\Rightarrow$ \eqref{mix-para} in Theorem \ref{thm-para-Green}]

By \eqref{G=fz-some}, for some $o \in M= M_1 \times M_2$ and $r_0 \in (0,\infty)$, we have
\begin{align}\label{eq-G-intBcomp}
\bigl\| G^M(o; \,\cdot\,) \, \mathbf{1}_{M \setminus B(o, \, r_0)}(\cdot) \bigr\|_{L^{p_2'}(L^{p_1'})(M)} = \infty.
\end{align}
To show  \eqref{mix-para},  it suffices to prove that for every $R \in (2r_0, \infty)$, the closure of the geodesic ball
\(
B_R := B(o, R)
\)
has zero mixed-capacity, namely,
\begin{align}\label{eq-cap0}
\operatorname{cap}_{p_1,\,p_2}\left(\overline B_R\right) = 0.
\end{align}

If $G^M(o;\, \cdot) \equiv \infty$ on $M$, then \eqref{eq-cap0} follows directly from \eqref{eq-cap-mu}. Hence, in what follows we assume that $G^M(o;\, \cdot) \not\equiv \infty$. In this case, Lemma~\ref{lem-G-property} implies that $G^M(o;\, \cdot)$ is continuous on $M \setminus \{o\}$. Consequently, the function $G^M(o;\, \cdot)$ is bounded on the annulus $B(o, 2R) \setminus B(o, \, r_0)$, which yields
\begin{equation}\label{eq-G-int-annu}
\bigl\| G^M(o; \,\cdot\,) \, \mathbf{1}_{B(o,\, 2R) \setminus B(o, \, r_0)}(\cdot) \bigr\|_{L^{p_2'}(L^{p_1'})(M)} < \infty.
\end{equation}
Combining this with \eqref{eq-G-intBcomp} gives
\begin{equation}\label{eq-G-int-largeBcomp}
\bigl\| G^M(o; \,\cdot\,) \, \mathbf{1}_{M \setminus B(o,\, 2R)}(\cdot) \bigr\|_{L^{p_2'}(L^{p_1'})(M)} = \infty.
\end{equation}

Next, we aim to apply \eqref{eq-G-int-largeBcomp} together with \eqref{eq-cap-mu} to establish \eqref{eq-cap0}.
To this end, by \eqref{eq-compa-G}, there exists a positive constant $C=C(o,R)$, such that for all $y \in \overline B_R=\overline {B(o, R)}$ and $x \in M \setminus B(o, 2R)$,
\[
C^{-1} G^M(x;\, o) \le G^M(x;\, y) \le C G^M(x;\, o).
\]
Consequently, for any $\mu \in \mathcal{M}^+(\overline B_R)$ and any $x \in M \setminus B(o, 2R)$,
\begin{align*}
  G^M\mu(x) & =\int_{\overline B_R} G^M(x;\, y)\,d\mu(y) \simeq \int_{\overline B_R} G^M(x;\ o)\,d\mu(y)\simeq G^M(x;\ o) \mu(\overline B_R).
\end{align*}
If $\mu(\overline B_R) \neq 0$, then this together with \eqref{eq-G-int-largeBcomp} implies that
\[
\bigl\|(G^M\mu)(\cdot) \, \mathbf{1}_{M \setminus B(o,\, 2R)}(\cdot) \bigr\|_{L^{p_2'}(L^{p_1'})(M)}
\simeq \bigl\| G^M(o; \,\cdot\,) \, \mathbf{1}_{M \setminus B(o,\, 2R)}(\cdot) \bigr\|_{L^{p_2'}(L^{p_1'})(M)} = \infty.
\]
In other words, any $\mu \in \mathcal{M}^+(\overline B_R)$ satisfying
$\|G^M\mu\|_{L^{p_2'}(L^{p_1'})(M)} \le 1$ must satisfy $\mu(\overline B_R) = 0$.
From this and \eqref{eq-cap-mu}, it follows that \eqref{eq-cap0} holds for every $R \in (2r_0, \infty)$,
which proves that $M$ is $L^{p_2}(L^{p_1})$-parabolic. Thus, we obtain the implication
\eqref{G=fz-some} $\Rightarrow$ \eqref{mix-para}.
\end{proof}

\subsection{Proof of (\ref{G=fz-some}) $\Leftrightarrow$ (\ref{G=fz-all})  in Theorem \ref{thm-para-Green} } \label{ss4.3}

\begin{proof}[Proof of \eqref{G=fz-some} $\Leftrightarrow$ \eqref{G=fz-all} in Theorem \ref{thm-para-Green}]

It suffices to prove the implication \eqref{G=fz-some} $\Rightarrow$ \eqref{G=fz-all}; the converse implication is trivial.

Assume that
\eqref{G=fz-some} holds, that is, for some $o \in M= M_1 \times M_2$ and $r_0 \in (0,\infty)$, we have
\begin{align}\label{eq-G=fz-ar0}
\bigl\| G^M(o; \,\cdot\,) \, \mathbf{1}_{M \setminus B(o, \, r_0)}(\cdot) \bigr\|_{L^{p_2'}(L^{p_1'})(M)} = \infty.
\end{align}
Then, we claim that for every $R \in (0,\infty)$,
\begin{equation}\label{eq-G=fz-aR}
\bigl\| G^M(o; \,\cdot\,) \, \mathbf{1}_{M \setminus B(o,\, R)}(\cdot) \bigr\|_{L^{p_2'}(L^{p_1'})(M)} = \infty.
\end{equation}
Indeed, when $R \in(0, r_0]$, this follows directly from \eqref{eq-G=fz-ar0}.
For $R \in (r_0, \infty)$,  if $G^M(o;\, \cdot) \equiv \infty$ on $M$, then \eqref{eq-G=fz-aR} is  immediate as well.
In the remaining case where $G^M(o; \,\cdot) \not\equiv \infty$ and $R \in (r_0, \infty)$, the same reasoning that led to \eqref{eq-G-int-annu} gives
\begin{equation*}
\bigl\| G^M(o; \,\cdot\,) \, \mathbf{1}_{B(o,\, R) \setminus B(o, \, r_0)}(\cdot) \bigr\|_{L^{p_2'}(L^{p_1'})(M)} < \infty,
\end{equation*}
which again implies the validity of \eqref{eq-G=fz-aR}. Thus, \eqref{eq-G=fz-aR} holds for every $R \in (0,\infty)$.

Next, we will show that for any other point $x \neq o$ and the special radius $r_x := d(x, o)$,
\begin{equation}\label{eq-G=fz-xRx}
\bigl\| G^M(x; \,\cdot\,) \, \mathbf{1}_{M \setminus B(x,\, r_x)}(\cdot) \bigr\|_{L^{p_2'}(L^{p_1'})(M)} = \infty.
\end{equation}
To this end, set $R_x := 2d(x, o)$.
Since $x \in B(o,\, R_x)$, it follows from \eqref{eq-compa-G} that for any $y \notin B(o, 2R_x)$,
\[
C^{-1} G^M(o;\, y) \le G^M(x;\, y) \le C G^M(o;\, y),
\]
where $C$ is a positive constant independent of $y$ (but depending on $o$ and $R_x$). As a consequence,
\[
G^M(x; \,\cdot\,) \, \mathbf{1}_{M \setminus B(o,\, 2R_x)}(\cdot)
\simeq G^M(o; \,\cdot\,) \, \mathbf{1}_{M \setminus B(o,\, 2R_x)}(\cdot).
\]
Combining this with \eqref{eq-G=fz-aR} gives
\[
\bigl\| G^M(x; \,\cdot\,) \, \mathbf{1}_{M \setminus B(o,\, 2R_x)}(\cdot) \bigr\|_{L^{p_2'}(L^{p_1'})(M)} = \infty.
\]
Observing that $B(x, r_x) \subset B(o, 2R_x)$, we then obtain
\[
\bigl\| G^M(x; \,\cdot\,) \, \mathbf{1}_{M \setminus B(x,\, r_x)}(\cdot) \bigr\|_{L^{p_2'}(L^{p_1'})(M)}
\ge \bigl\| G^M(x; \,\cdot\,) \, \mathbf{1}_{M \setminus B(o,\, 2R_x)}(\cdot) \bigr\|_{L^{p_2'}(L^{p_1'})(M)} = \infty.
\]
Hence, \eqref{eq-G=fz-xRx} holds.

Once \eqref{eq-G=fz-xRx} is established, it follows from \eqref{eq-G=fz-aR} that the special radius $r_x$ in \eqref{eq-G=fz-xRx} can be replaced by an arbitrary radius $r \in (0,\infty)$ while preserving the infiniteness of the mixed-norm. This will finally yield \eqref{G=fz-all}.
\end{proof}

\subsection{Proof of (\ref{G=fz-all}) $\Leftrightarrow$ (\ref{mix-Liouv})  in Theorem \ref{thm-para-Green} } \label{ss4.4}

\begin{proof}[Proof of (\ref{G=fz-all}) $\Rightarrow$ (\ref{mix-Liouv})  in Theorem \ref{thm-para-Green}]

Let us prove it by contradiction. Suppose that $M$ does not possess the $L^{p_2'}(L^{p_1'})$-Liouville property. Then there exists a  nonconstant nonnegative superharmonic function $f \in L^{p_2'}(L^{p_1'})(M)$.
If $G^M\equiv\infty$, then by Theorem \ref{thm-G-para}
the manifold $M$ is classically parabolic and, hence,
every positive bounded superharmonic function on $M$ is constant.  Thus, we must have $G^M\not\equiv\infty$.
By Lemma \ref{lem-compari}, for any given geodesic ball $B(o,r)$ with center $o\in M$ and
radius $r\in (0,\infty)$, there exists a positive constant $C$, depending on $f, o$ and $r$, such that
\begin{align*}\label{eq-G-comp}
G^M(o;\,x)\le  C f(x)\quad\text{for a.e.}\  x\in M\setminus{\overline {B(o,r)}}.
\end{align*}
Consequently,
$$
  \left\|G^M(o;\ \cdot) {\mathbf 1}_{M\setminus B(o,\, r)}(\cdot)\right\|_{L^{p_2'}(L^{p_1'})(M)}\le C  \left\|f {\mathbf 1}_{M\setminus B(o,\, r)}\right\|_{L^{p_2'}(L^{p_1'})(M)}<\infty,
  $$
which contradicts \eqref{G=fz-all}.
Thus, we obtain \eqref{G=fz-all} $\Rightarrow$ \eqref{mix-Liouv}.
\end{proof}

\begin{proof}[Proof of (\ref{mix-Liouv}) $\Rightarrow$ (\ref{G=fz-all})  in Theorem \ref{thm-para-Green}]
Suppose to the contrary that \eqref{G=fz-all} does not hold, that is, there exist a point $o \in M$ and a number $r \in (0,\infty)$ such that
\[
G^M(o; \,\cdot\,) \mathbf{1}_{M \setminus B(o,\, r)} \in L^{p_2'}(L^{p_1'})(M).
\]
Then, we take $y\notin B(o, r)$ such that $G^M(o; \,y)<\infty$. Set $a:=2G^M(o; \,y)$.
For any $x \in M$, define
\[
f(x) := \min\bigl\{a,\; G^M(o;\, x)\bigr\},
\]
which is a nonconstant function in
$L^{p_2'}(L^{p_1'})(M)$. Moreover, $f$ is a positive superharmonic function, which implies that $M$ does not possess the $L^{p_2'}(L^{p_1'})$-Liouville property. Consequently, \eqref{mix-Liouv} fails. This establishes the implication \eqref{mix-Liouv} $\Rightarrow$ \eqref{G=fz-all}.
\end{proof}

\section{Green function integrability and the  nonlinear mixed-potential}\label{s5}

The main aim of this section is to prove that, under the weak radial Harnack-type inequality \eqref{eq-weak-radialharnack}, item (iv) of Theorem \ref{thm-main} is equivalent to items (i), (ii), and (iii) of the same theorem. This follows as a consequence of Theorem \ref{thm-G-poten} below.

\begin{theorem}\label{thm-G-poten}
Let \(M = M_1 \times M_2\) and \(p_1, p_2 \in (1,\infty)\). Suppose that \(M\) satisfies the weak radial Harnack-type inequality \eqref{eq-weak-radialharnack}.
  Then, the following assertions are equivalent:
\begin{enumerate}[\rm (i)]
  \item\label{G=fz-all0}
 For  all $x\in M$ and all $r\in (0,\infty)$,
  $$
  \left\|G^M(x;\ \cdot) {\mathbf 1}_{M\setminus B(x,\, r)}(\cdot)\right\|_{L^{p_2'}(L^{p_1'})(M)}=\infty.
  $$

  \item\label{Gf=fz-all}   For all nonzero $0\le f\in {\mathcal C}_c^\infty(M)$ and all  $x\in M$,
  \begin{align*}
  {\mathcal G}_{p_1,\,p_2}(f)(x)\equiv \infty.
\end{align*}

  \item\label{Gf=fz-some}    For some $x_0\in M$ and some nonzero $0\le f\in {\mathcal C}_c^\infty(M)$,
  \begin{align*}
  {\mathcal G}_{p_1,\,p_2}(f)(x_0)\equiv\infty.
  \end{align*}
\end{enumerate}
\end{theorem}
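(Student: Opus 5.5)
The plan is to prove (i) $\Rightarrow$ (ii) $\Rightarrow$ (iii) $\Rightarrow$ (i). The middle implication is trivial. Both nontrivial implications rest on one reduction: for a fixed nonzero $0\le f\in{\mathcal C}_c^\infty(M)$, comparing ${\mathcal G}_{p_1,p_2}(f)(x)$ with the weighted integral
$$
I(x):=\int_{M_2}\Big(\int_{M_1}G^M(x;\,y)^{p_1'}\,dV_1(y_1)\Big)^{\frac{p_2'-p_1'}{p_1'}}\int_{M_1} G^M(x;\,y)\,G^M(o;\,y)^{p_1'-1}\,dV_1(y_1)\,dV_2(y_2),
$$
restricted to $y$ far from $\operatorname{supp} f$.

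First dispose of the degenerate case $G^M\equiv\infty$. Then $G^M f\equiv\infty$, so ${\mathcal G}_{p_1,p_2}(f)\equiv\infty$ by the convention in the remark following Definition~\ref{def-mix-poten}, and (i) holds trivially. So assume $G^M\not\equiv\infty$.

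Choose $o$ and $R$ with $\operatorname{supp}f\subset B(o,R)$. By Lemma~\ref{lem-EHIG}, for $y\notin B(o,2R)$,
$$
G^Mf(y)\simeq \|f\|_{L^1}\,G^M(o;\,y).
$$
On the bounded region $B(o,2R)$, $G^Mf$ is continuous, positive and bounded (Lemma~\ref{lem-FG3.4}). The same lemma, applied in the first variable, gives $G^M(x;\,y)\simeq G^M(o;\,y)$ for $y$ far away, uniformly for $x$ in compacts. Hence ${\mathcal G}_{p_1,p_2}(f)(x)=\infty$ for one $x$ if and only if it is infinite for all $x$, and up to constants the far part of the integrand becomes
$$
N(y_2)^{p_2'-p_1'}\,G^M(o;\,y)^{p_1'},\qquad N(y_2):=\|G^Mf(\cdot,y_2)\|_{L^{p_1'}(M_1)}.
$$

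The key step, and the main obstacle, is controlling $N(y_2)$: the inner $L^{p_1'}(M_1)$-norm mixes near and far regions in $y_1$, so $N(y_2)$ is not simply comparable to $\|G^M(o;\,\cdot,y_2)\mathbf{1}_{\text{far}}\|_{L^{p_1'}(M_1)}$. This is where the weak radial Harnack inequality \eqref{eq-weak-radialharnack} enters. For $y_2\in B(o_2,r)$ and $y_1\notin B(o_1,2r)$ it gives $G^M(o;\,y)\simeq G^M(o;\,y_1,o_2)$. So on slabs $M_1\times B(o_2,r)$ the fibre norms are all comparable to the single quantity $\|G^M(o;\,\cdot,o_2)\mathbf{1}_{M_1\setminus B(o_1,2r)}\|_{L^{p_1'}}$, plus a bounded near contribution. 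With $N(y_2)$ thus sandwiched between constants times $A(y_2):=\|G^M(o;\,\cdot,y_2)\mathbf{1}_{\text{far}}\|_{L^{p_1'}(M_1)}$ (plus bounded terms), one obtains
$$
\int N^{p_2'-p_1'}\,A^{p_1'}\,dV_2\simeq\int A^{p_2'}\,dV_2=\|G^M(o;\,\cdot)\mathbf{1}_{\text{far}}\|^{p_2'}_{L^{p_2'}(L^{p_1'})}.
$$

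For (i) $\Rightarrow$ (ii), use this lower comparison together with Theorem~\ref{thm-para-Green} (the $r$-independence of (i)). Treat the sign of $p_2'-p_1'$ in two cases: when the exponent is negative, use the upper bound on $N$; when positive, use the lower bound. For the slabs where $N(y_2)=\infty$, invoke the remark's convention. For (iii) $\Rightarrow$ (i), argue by contrapositive: if $\|G^M(x;\,\cdot)\mathbf{1}_{M\setminus B(x,r)}\|_{L^{p_2'}(L^{p_1'})}<\infty$, then $N(y_2)<\infty$ for a.e.\ $y_2$. The upper comparison, with the near region handled by the boundedness of $G^Mf$ and the local integrability of $G^M(x;\,\cdot)$ in the mixed norm (as in \eqref{eq-G1K-near}), then gives ${\mathcal G}_{p_1,p_2}(f)(x_0)<\infty$ via Hölder's inequality \eqref{eq-Holder}, since
$$
\int N^{p_2'-p_1'}\,G^M(x_0;\,\cdot)\,(G^Mf)^{p_1'-1}\le \|G^M(x_0;\,\cdot)\|_{L^{p_2'}(L^{p_1'})}\cdot\|G^Mf\|_{L^{p_2'}(L^{p_1'})}^{p_2'-1},
$$
with the near part handled separately. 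I expect the uniformity of the Harnack constants across slabs, given that $C$ in \eqref{eq-weak-radialharnack} may depend on $r$, to require covering $M_2$ by finitely many near balls plus the far region, using fixed $r>r_0$.
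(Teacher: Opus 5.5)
Your reduction (the far-field comparison $G^Mf\simeq\|f\|_{L^1(M)}G^M(o;\cdot)$ from Lemma~\ref{lem-EHIG}, with a case split on the sign of $p_2'-p_1'$) and your route for (i)$\Rightarrow$(ii) are essentially the paper's. Treating the \emph{far} part of (iii)$\Rightarrow$(i) by H\"older is a legitimate shortcut for the paper's Steps~1--2: $F:=N(y_2)^{p_2'-p_1'}(G^Mf)^{p_1'-1}$ is the H\"older-extremal function, with $\|F\|_{L^{p_2}(L^{p_1})(M)}=\|G^Mf\|_{L^{p_2'}(L^{p_1'})(M)}^{p_2'-1}$.

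The gap is the \emph{near} region in (iii)$\Rightarrow$(i). You control it by ``local integrability of $G^M(x;\cdot)$ in the mixed norm (as in \eqref{eq-G1K-near})''. But \eqref{eq-G1K-near} concerns the bounded potential $G^M\mathbf 1_K$, not the kernel, and the claim is false in general. On $\mathbb R^{n_1}\times\mathbb R^{n_2}$ with $n_1+n_2\ge3$, a direct computation gives $G^M(x;\cdot)\mathbf 1_{B(x,1)}\in L^{p_2'}(L^{p_1'})(M)$ if and only if $D_{\rm eff}<2$. The failure of (i) there means exactly $D_{\rm eff}>2$, so in the model case the right-hand side of your H\"older display is $+\infty$. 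Near the pole you only have $G^M(x_0;\cdot)\in L^1_{\rm loc}$ (since $G^M\phi(x_0)<\infty$). You therefore need the weight $N(y_2)^{p_2'-p_1'}$ to be bounded \emph{uniformly} for $y_2$ in the near ball $B_2$. Knowing $N(y_2)<\infty$ a.e., or even $N\in L^{p_2'}(B_2)$, does not suffice when $p_2<p_1$: the function $y_2\mapsto\int_{B_1}G^M(x_0;y_1,y_2)\,dV_1(y_1)$ is merely integrable and typically blows up at the $M_2$-coordinate of $x_0$. For the same reason, your early claim that finiteness of ${\mathcal G}_{p_1,p_2}(f)(x)$ is independent of $x$ is not justified by the far comparison alone.

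That uniform bound is exactly where the paper uses \eqref{eq-weak-radialharnack} (Step~3, the term $\mathrm{Z}_3$), and your own slab observation supplies it. For $y_2\in B_2$,
\[
N(y_2)^{p_1'}\lesssim 1+\|G^M(o;\cdot,y_2)\mathbf 1_{M_1\setminus 2B_1}\|_{L^{p_1'}(M_1)}^{p_1'}\simeq 1+\|G^M(o;\cdot,o_2)\mathbf 1_{M_1\setminus 2B_1}\|_{L^{p_1'}(M_1)}^{p_1'},
\]
with constants independent of $y_2$. The last norm is finite because $V_2(B_2)$ times its $p_2'$-th power is dominated by the (assumed finite) mixed norm of $G^M(o;\cdot)\mathbf 1_{M\setminus(B_1\times B_2)}$. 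When $p_2\ge p_1$ you need instead $N(y_2)\gtrsim1$, which follows from $G^M(o;\cdot)\ge c>0$ on $(2B_1\setminus B_1)\times B_2$. With the weight bounded and $G^Mf$ bounded on $B_1\times B_2$, the near part is $\lesssim\int_{B_1\times B_2}G^M(x_0;\cdot)\,dV<\infty$.

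By contrast, the sandwich ``$N\approx A$ plus bounded terms'' for which you invoke the weak Harnack inequality follows from Lemma~\ref{lem-EHIG} alone. No covering of $M_2$ is needed either, since for $y_2\notin B_2$ the whole fibre is far from $o$.

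Finally, appealing to the convention when $N(y_2)=\infty$ only works for $p_2<p_1$; for $p_2>p_1$ it sets ${\mathcal G}_{p_1,p_2}(f)=0$. For example, on $\mathbb R^3\times\mathbb R$ with $p_1=3$, $p_2=4$, every fibre norm diverges logarithmically while (i) holds. So (i)$\Rightarrow$(ii) genuinely fails there under the stated convention, and the paper's proof also tacitly assumes $N(y_2)<\infty$.
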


We remark that \eqref{eq-weak-radialharnack} is only used in the proof of
\eqref{Gf=fz-some} $\Rightarrow$ \eqref{G=fz-all0} in Theorem \ref{thm-G-poten}.

\subsection{Sufficient conditions for the weak radial Harnack-type inequality}\label{ss5.1}

To begin with, we introduce the following \emph{radial Harnack-type inequality}, which is evidently stronger than the weak radial Harnack-type inequality in Definition~\ref{def:weak-radial-Harnack}.

\begin{definition}\label{def:radial-Harnack}
We say that $M$ satisfies a \emph{radial Harnack-type inequality} if for any $x \in M$, there exist positive constants $\delta = \delta(x)$ and $C = C(x)$ such that for all $y, z \in M$ with
\[
\delta < \frac{d(x,z)}{2} \le d(x,y) \le 2 d(x,z),
\]
we have
\begin{equation}\label{eq:Gcomp-Harnack}
C^{-1} \, G^M(x;\,z) \le G^M(x;\,y) \le C \, G^M(x;\,z).
\end{equation}
\end{definition}

\begin{lemma}\label{lem-Gxy}
Suppose that the Riemannian manifold $(M, d, V)$ satisfies $\SG$. Then the following  hold:
\begin{enumerate}
  \item[\rm (i)] For all distinct $x,y\in M$,
\begin{align}\label{eq-Gxy}
G^M(x;\,y)\simeq \int_{d(x,\,y)}^\infty \frac{r\,dr}{V(x,r)}.
\end{align}
\item[\rm (ii)]  For all  $x, x', y\in M$ with $d(x,x')<d(x,y)/2$,
\begin{align}\label{eq-Gcomp-equiv}
   G^M(x;\,y)\simeq G^M(x',y).
\end{align}
\end{enumerate}
In particular, if $(M, d, V)$ is a complete Riemannian  manifold with  nonnegative Ricci curvature, then both \eqref{eq-Gxy} and \eqref{eq-Gcomp-equiv} hold. It is obvious that \eqref{eq-Gcomp-equiv}
implies  \eqref{eq:Gcomp-Harnack} and, hence, \eqref{eq-weak-radialharnack}.
\end{lemma}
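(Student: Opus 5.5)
The plan is to derive both parts from the Li--Yau estimate \SG{} by integrating in time, using only the consequences of \SG{} recorded earlier: volume doubling \vd{} and the comparison \eqref{eq:Vxy}.

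For (i), fix distinct $x,y$ and put $\rho:=d(x,y)$. By \eqref{eq:Green-kernel}, $G^M(x;\,y)=\int_0^\infty p_t^M(x,y)\,dt$, and I insert the two-sided bound \SG. First I use \vd{} to replace $\sqrt{V(x,\sqrt t)V(y,\sqrt t)}$ by $V(x,\sqrt t)$. For $t\ge\rho^2$ the two volumes are comparable, since $V(y,\sqrt t)\le V(x,2\sqrt t)\le C_DV(x,\sqrt t)$ and symmetrically. For $t<\rho^2$ the ratio $V(x,\sqrt t)/V(y,\sqrt t)$ is at most polynomial in $\rho/\sqrt t$ by the remark after Definition~\ref{def-vd}, and this factor is absorbed into the Gaussian by lowering $c$. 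The upper bound is then split into $t\ge\rho^2$ and $t<\rho^2$. On $t\ge\rho^2$, the substitution $r=\sqrt t$ gives $2\int_\rho^\infty r\,dr/V(x,r)$. On $t<\rho^2$, the bound $V(x,\rho)/V(x,\sqrt t)\le C(\rho/\sqrt t)^{\alpha_+}$ combined with $e^{-c\rho^2/t}$ gives a contribution of at most $C\rho^2/V(x,\rho)$. This is dominated by the tail integral, because $\int_\rho^{2\rho}r\,dr/V(x,r)\ge c\rho^2/V(x,2\rho)\ge c'\rho^2/V(x,\rho)$ by \vd. For the lower bound I restrict to $t\ge\rho^2$, where $e^{-c\rho^2/t}\ge e^{-c}$, and this yields $\gtrsim\int_\rho^\infty r\,dr/V(x,r)$. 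If the integral diverges, both sides equal $\infty$, and the equivalence still holds in that case.

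For (ii), let $d(x,x')<d(x,y)/2$. By (i), it suffices to compare $I(x,\rho):=\int_\rho^\infty r\,dr/V(x,r)$ with $I(x',\rho')$, where $\rho=d(x,y)$ and $\rho'=d(x',y)$. The triangle inequality gives $\rho/2\le\rho'\le3\rho/2$. For $r\ge\rho/2$ we have $d(x,x')<r$, so $V(x',r)\le V(x,2r)\le C_DV(x,r)$, and symmetrically. Hence the integrands are comparable on $[\rho/2,\infty)$. Changing the lower limit by a factor in $[1/2,2]$ changes the integral only by a comparable amount, again via \vd{} as in the previous paragraph (the piece $\int_{\rho/2}^{\rho}$ is controlled by $\rho^2/V(x,\rho)\lesssim\int_\rho^{2\rho}$). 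This proves \eqref{eq-Gcomp-equiv}.

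For the final assertions, nonnegative Ricci curvature implies \SG{} by Li--Yau, so (i) and (ii) apply. To get \eqref{eq:Gcomp-Harnack}, note that when $d(x,z)/2\le d(x,y)\le2d(x,z)$, the function $r\mapsto I(x,r)$ is decreasing and satisfies $I(x,r)\simeq I(x,2r)$ by the same \vd{} argument, so $G^M(x;\,y)\simeq G^M(x;\,z)$ via (i). For \eqref{eq-weak-radialharnack}, observe that $y$ and $y_x$ have distance at most $r$ from each other, while both have distance $\ge2r$ from $x$. I then compare $G^M(x;\,y)$ and $G^M(x;\,y_x)$ via (i), since $d(x,y)$ and $d(x,y_x)$ lie within a factor $\sqrt{5}/2$ of each other. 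Alternatively one can use symmetry of $G$ together with (ii) applied to the pair $y,y_x$ with base point $x$. The main obstacle is the small-time piece in (i): it requires carefully absorbing the polynomial volume ratio into the Gaussian, and this is exactly where \vd{} (rather than just \SG) is essential.
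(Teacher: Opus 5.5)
Your proof is correct. For part (i) it takes a different route from the paper. The paper does not derive \eqref{eq-Gxy}; it simply cites Cao--Grigor'yan--Liu (Lemma~2.4). You instead integrate (LY) in time directly. You replace $\sqrt{V(x,\sqrt t)V(y,\sqrt t)}$ by $V(x,\sqrt t)$, absorb the polynomial volume ratio into the Gaussian for $t<\rho^2$, and bound that piece by $\rho^2/V(x,\rho)\lesssim\int_\rho^{2\rho}r\,dr/V(x,r)$. This makes the lemma self-contained and shows exactly where (VD) enters; the citation is merely shorter.

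For (ii), you and the paper use the same ingredients: the triangle inequality plus $V(x',t)\le V(x,2t)\le C_D^2V(x,t/2)$ for $t\ge d(x',y)$. The paper packages this through the substitution $r=t/2$:
\[
\int_{d(x,y)}^\infty \frac{r\,dr}{V(x,r)}\le\int_{d(x',y)/2}^\infty\frac{r\,dr}{V(x,r)}=\frac14\int_{d(x',y)}^\infty\frac{t\,dt}{V(x,t/2)}\le\frac{C_D^2}{4}\int_{d(x',y)}^\infty\frac{t\,dt}{V(x',t)},
\]
which shifts the lower limit and the base point in a single step. You first compare the integrands on $[\rho/2,\infty)$ and then adjust the lower limit with the annulus estimate; the two arguments are equivalent.

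Your treatment of the closing remarks is more careful than the paper's ``obvious''. Part (ii) moves the pole, not the target along a sphere. So \eqref{eq:Gcomp-Harnack} really comes from (i) together with $I(x,r)\simeq I(x,2r)$, as you do it. By contrast, \eqref{eq-weak-radialharnack} does follow from (ii) plus the symmetry of $G^M$, since $d(y,y_x)<r\le d(x,y)/2$.
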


\begin{proof}
Under $\SG$, it is known that \eqref{eq-Gxy} holds (see, for example,  \cite[Lemma~2.4]{CaoGrigoryanLiu2021JFA}).
Next, we will show \eqref{eq-Gcomp-equiv}.
By the condition $d(x,x') < d(x,y)/2$ and the triangle inequality, we obtain
\[
d(x,x') < d(x',y) \quad \text{and} \quad
\frac{d(x,y)}{2} < d(x',y) < 2d(x,y).
\]
A change of variable $r = t/2$ yields
\begin{align*}
\int_{d(x,\,y)}^\infty \frac{r\,dr}{V(x,r)}
&\le \int_{d(x',\, y)/2}^\infty \frac{r\,dr}{V(x,r)}
= \frac14 \int_{d(x',\, y)}^\infty \frac{t\,dt}{V(x,t/2)}.
\end{align*}
Since $\SG$ holds, it follows that the volume doubling property $\vd$ is satisfied (see \cite{Grigoryan1991MSb, Saloff-Coste1992IMRN}).
Thus, for any $t \ge d(x',y)$, we have
\[
B(x',t) \subset B(x,\, d(x,x')+t)
\subset B(x,\, d(x',y)+t)
\subset B(x,\, 2t)
\]
and, hence,
\[
V(x',t) \le V(x,2t) \le C_D^2 \, V(x,t/2),
\]
where $C_D$ denotes the volume doubling constant.
Consequently,
\begin{align*}
\int_{d(x,\,y)}^\infty \frac{r\,dr}{V(x,r)}
&\le \frac{C_D^2}{4} \int_{d(x',\, y)}^\infty \frac{t\,dt}{V(x',t)}.
\end{align*}
Similarly, the same distance comparability and volume doubling estimates, applied to the lower integral limit $d(x',\, y)$ and the integrand $\frac{1}{V(x',r)}$, yield the reverse inequality
\begin{align*}
\int_{d(x',\, y)}^\infty \frac{r\,dr}{V(x',r)}
&\le \frac{C_D^2}{4} \int_{d(x,\,y)}^{\infty} \frac{r\,dr}{V(x,r)}.
\end{align*}
Combining these last two estimates with \eqref{eq-Gxy}, we conclude that \eqref{eq-Gcomp-equiv} holds.
\end{proof}

Applying \eqref{eq-ptptMi}, we see that $\SG$ and, hence, the conclusions of Lemma \ref{lem-Gxy}, are all preserved on product manifolds.

\begin{lemma}\label{lem-Green-Product}
Let \( M = M_1 \times M_2 \). Assume that  the heat kernels \(\{p_t^{M_1}\}_{t\in(0,\infty)}\) and \(\{p_t^{M_2}\}_{t\in(0,\infty)}\) satisfy $\SG$. Then, the product heat kernel \(\{p_t^{M}\}_{t\in(0,\infty)}\) also satisfies $\SG$. Consequently, the Green function \(G^M(x;\,y)\) satisfies both \eqref{eq-Gxy} and \eqref{eq-Gcomp-equiv}, and therefore also \eqref{eq:Gcomp-Harnack} and  \eqref{eq-weak-radialharnack}.

\end{lemma}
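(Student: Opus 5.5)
The strategy is to verify \SG\ for the product kernel directly from the factorization \eqref{eq-ptptMi}, using the volume comparison of Proposition~\ref{prop-vd}. After that, every remaining assertion follows at once from Lemma~\ref{lem-Gxy}.

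\emph{Step 1: volume doubling on the factors.} Since each $\{p_t^{M_i}\}$ satisfies \SG, the results of Grigor'yan and Saloff-Coste cited above show that each $(M_i,d_i,V_i)$ satisfies \vd. Proposition~\ref{prop-vd}(i) then gives, for $x=(x_1,x_2)$ and $r>0$,
$$V(x,r)\simeq V_1(x_1,r)\,V_2(x_2,r).$$

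\emph{Step 2: multiply the two-sided Gaussian bounds.} For $t>0$, $x=(x_1,x_2)$ and $y=(y_1,y_2)$, the upper bounds on the factors combine through \eqref{eq-ptptMi} to give
$$p_t^M(x,y)\le \frac{C^2}{\sqrt{V_1(x_1,\sqrt t)V_2(x_2,\sqrt t)\,V_1(y_1,\sqrt t)V_2(y_2,\sqrt t)}}\exp\Big(-c\,\frac{d_1(x_1,y_1)^2+d_2(x_2,y_2)^2}{t}\Big).$$
The lower bounds combine in the same way, with possibly different constants $C,c$. By \eqref{eq-dg}, the exponent is exactly $-c\,d(x,y)^2/t$. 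By Step~1, the product of the four volumes is comparable to $V(x,\sqrt t)V(y,\sqrt t)$. This shows that $p_t^M$ satisfies \SG. The only care needed is that the Gaussian constants $c$ may differ between the two factors. To handle this, I would take the smaller constant in the upper bound and the larger one in the lower bound; this is harmless because \SG\ permits different constants on each side.

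\emph{Step 3: conclude.} $M$ is a complete Riemannian manifold whose heat kernel satisfies \SG, so Lemma~\ref{lem-Gxy} applies to $G^M$. This yields \eqref{eq-Gxy} and \eqref{eq-Gcomp-equiv}. As noted in Lemma~\ref{lem-Gxy}, \eqref{eq-Gcomp-equiv} implies \eqref{eq:Gcomp-Harnack}. For completeness I would spell this out. Take $y,z$ with $d(x,z)/2\le d(x,y)\le 2d(x,z)$. By \eqref{eq-Gxy} together with \vd\ on $M$, both $G^M(x;y)$ and $G^M(x;z)$ are comparable to $\int_{d(x,z)}^\infty r\,dr/V(x,r)$, since replacing the lower limit $s$ by $s/2$ or $2s$ changes this integral only by a bounded factor under \vd.

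Finally, \eqref{eq:Gcomp-Harnack} implies \eqref{eq-weak-radialharnack}. For $y_1\notin B(x_1,2r)$ and $y_2\in B(x_2,r)$, set $y_x=(y_1,x_2)$. Then $d(x,y_x)=d_1(x_1,y_1)\ge 2r$, and
$$d(x,y)^2=d_1(x_1,y_1)^2+d_2(x_2,y_2)^2\le d_1(x_1,y_1)^2+r^2\le 2\,d_1(x_1,y_1)^2.$$
Hence $d(x,y_x)\le d(x,y)\le \sqrt2\, d(x,y_x)$, so the pair $(y,y_x)$ meets the ratio condition of Definition~\ref{def:radial-Harnack} once $r$ is large compared with $\delta(x)$.

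There is no serious obstacle in this argument. The only points needing care are the mismatched Gaussian constants in Step~2 and the use of \vd\ to compare the Green integrals at comparable radii in Step~3.
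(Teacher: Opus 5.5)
Your proposal is correct and follows the paper's proof: factor $p_t^M$ via \eqref{eq-ptptMi}, recombine the volumes with \eqref{eq1-Vprod} and the Gaussian exponents with \eqref{eq-dg} to obtain $\SG$ on $M$, then invoke Lemma~\ref{lem-Gxy}. Your additional details are sound and fill in steps the paper leaves implicit: taking the smaller/larger Gaussian constant on the upper/lower side, deriving \eqref{eq:Gcomp-Harnack} directly from \eqref{eq-Gxy} and $\vd$, and checking $d(x,y_x)\le d(x,y)\le \sqrt2\,d(x,y_x)$ for \eqref{eq-weak-radialharnack}. The direct derivation from \eqref{eq-Gxy} is also cleaner than going through \eqref{eq-Gcomp-equiv}, since that estimate moves the pole rather than comparing points at comparable distance from it.
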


\begin{proof} Write $x = (x_1, x_2)$ and $y = (y_1, y_2)$, where $x_i, y_i \in M_i$ for $i = 1, 2$.
Since each $\{p_t^{M_i}\}_{t\in(0,\infty)}$ satisfies $\SG$, it follows that each $(M_i, d_i, V_i)$ satisfies $\vd$.
Then, using \eqref{eq-ptptMi} and \eqref{eq1-Vprod}, we obtain
\begin{align*}
  p_t^{M}(x;\,y) & =p_t^{M_1}(x_1,y_1)p_t^{M_2}(x_2,y_2) \notag\\
  &\simeq \frac{1}{\sqrt{\prod_{i=1}^2 V_{i}( x_i,t^{1/2 }) V_{i}( y_i,t^{1/2 })}  }
\exp\left( -c \frac{d_{1}(x_1,y_1)^2}{t}\right)\exp\left( -c \frac{d_2(x_2,y_2)^2}{t}\right)\notag\\
&\simeq  \frac{1}{\sqrt{V( x,t^{1/2 }) V( y,t^{1/2 })}  }
\exp\left( -c \frac{d(x,y)^2}{t}\right).
\end{align*}
Thus, the product heat kernel $\{p_t^{M}\}_{t\in(0,\infty)}$ satisfies $\SG$.
Consequently, applying Lemma \ref{lem-Gxy} yields that \(G^M(x;\,y)\) satisfies  \eqref{eq-Gxy} and \eqref{eq-Gcomp-equiv}.
\end{proof}

\begin{example}\label{eq-M-ends}\rm
Suppose that $(M_1, d_1, V_1)$ is a Riemannian manifold such that the associated heat kernel \(\{p_t^{M_1}\}_{t\in(0,\infty)}\) satisfies $\SG$, that is, for all $x_1,y_1\in M_1$ and $t\in(0,\infty)$,
 \begin{align}\label{eq00-ptM}
  p_t^{M_1}(x_1,\,y_1)\asymp \frac1{V_1(x_1,\,\sqrt t)} \exp\left(-c\frac{d_1^2(x_1,\,y_1)}{t}\right).
\end{align}
Let $N \ge 4$ be an integer. Following Grigor'yan and Saloff-Coste \cite{GrigoryanSaloffCoste2009AIF}, we consider the connected sum
\[
M_2 := \mathcal{R}^1 \mathbin{\#} \mathcal{R}^3,
\]
where $\mathcal{R}^1 := \mathbb{R}_+ \times \mathbb{S}^{N-1}$ and $\mathcal{R}^3 := \mathbb{R}^3 \times \mathbb{S}^{N-3}$.
For the Riemannian product manifold
\( M = M_1 \times M_2 \), we will show that the corresponding Green kernel $G^M$ satisfies the weak radial Harnack-type inequality \eqref{eq-weak-radialharnack}.

Let us recall the estimates for the heat kernel $\{p_t^{M_2}\}_{t\in(0,\infty)}$ on $M_2$ established in \cite[Section~6.4]{GrigoryanSaloffCoste2009AIF}.
Denote by $d_2$ and $V_2$ the geodesic distance and Riemannian volume on $M_2$.
Suppose that $K$ is the central part of $M_2$ and $E_1, E_2$ are the ends of $M_2$ so that $E_i$ is isometric to the complement of a compact set in $\mathcal{R}^i$. Let $E_0\subset M_2$ be a precompact open set  containing $K$. For any point $x_2 \in M_2$, set
\[
\|x_2\| := \sup_{z \in K} d_2(x_2,\, z).
\]
Then we have $\|x_2\| \simeq 1 + d_2(x_2, K)$ for all $x_2 \in M_2$.
By \cite[Section~6.4]{GrigoryanSaloffCoste2009AIF}, we have the following estimates:
\begin{itemize}
  \item[(0)] For any $x_2,y_2\in M_2$ and $t\in(0,1]$,
\begin{align}\label{eq0-ptM1}
  p_t^{M_2}(x_2,\,y_2)\asymp \frac1{V_2(x_2,\,\sqrt t)} \exp\left(-c\frac{d_2^2(x_2,\,y_2)}{t}\right).
\end{align}
  \item[(1)] For any $x_2\in E_0\cup E_1,\,y_2\in E_0\cup E_2$ and $t\in(1,\infty)$,
\begin{align}\label{eq1-ptM1}
  p_t^{M_2}(x_2,\,y_2)\asymp \frac1{t^{3/2}} \left(1+\frac{\|x_2\|}{\|y_2\|}\right) \exp\left(-c\frac{d_1^2(x_2,\,y_2)}{t}\right).
\end{align}
  \item[(2)]For any $x_2,y_2\in E_0\cup E_1$ and $t\in(1,\infty)$,
\begin{align}\label{eq2-ptM1}
  p_t^{M_2}(x_2,\,y_2)\asymp \frac{\|x_2\|\|y_2\|}{\sqrt{t(t+\|x_2\|^2)(t+\|y_2\|^2)\,}} \exp\left(-c\frac{d_1^2(x_2,\,y_2)}{t}\right).
\end{align}
   \item[(3)] For any $x_2,y_2\in E_0\cup E_2$ and $t\in(1,\infty)$,
\begin{align}\label{eq3-ptM1}
  p_t^{M_2}(x_2,\,y_2)\asymp \frac1{t^{3/2}} \exp\left(-c\frac{d_1^2(x_2,\,y_2)}{t}\right).
\end{align}
\end{itemize}

Let $d$ denote the geodesic distance on $M$, defined as in \eqref{eq-dg}.
Fix a point $x= (x_1, x_2)\in M$ with $x_i \in M_i$ for $i\in\{1,2\}$, and fix a large number $r \in (10^{10},\, \infty)$. For $i=1,2$, define the ball $$B_i:=\{z\in M_i:\, d_i(z, x_i)<r\}.$$
For any $y = (y_1, y_2)\in M$ such that $y_1 \notin 2B_1$ and $y_2 \in B_2$, by
\eqref{eq00-ptM} and  \eqref{eq0-ptM1}-\eqref{eq1-ptM1}-\eqref{eq2-ptM1}-\eqref{eq3-ptM1},
we have
\begin{align}\label{eq-GM-radialH}
 G^M(x;\,y)
 &=\int_0^\infty  p_t^{M_1}(x_1,\,y_1)p_t^{M_2}(x_2,\,y_2)\,dt\\
 &\asymp \int_0^\infty D(t,x_2,y_2)  \frac1{V_1(x_1,\,\sqrt t)} \exp\left(-c\frac{d^2(x,\,y)}{t}\right)\,dt, \notag
\end{align}
where
$$
D(t,x_2,y_2)
:=\begin{cases}
   \frac1{V_2(x_2,\,\sqrt t)}\quad  & \mbox{as }\ x_2,\, y_2\in M_2\ \text{ and }\  t\in(0,1]; \\
   \frac1{t^{3/2}} \left(1+\frac{\|x_2\|}{\|y_2\|}\right) \quad  & \mbox{as }\  x_2\in E_0\cup E_1,\, y_2\in E_0\cup E_2\ \text{ and }\ t\in(1,\infty); \\
   \frac1{t^{3/2}} \left(1+\frac{\|y_2\|}{\|x_2\|}\right) \quad  & \mbox{as }\  x_2\in E_0\cup E_2,\, y_2\in E_0\cup E_1\ \text{ and }\ t\in(1,\infty); \\
   \frac{\|x_2\|\,\|y_2\|}{\sqrt{t(t+\|x_2\|^2)(t+\|y_2\|^2)\,}}\quad  & \mbox{as }\  x_2,\, y_2\in E_0\cup E_1\ \text{ and }\ t\in(1,\infty); \\
   \frac1{t^{3/2}}\quad  & \mbox{as } x_2,\, y_2\in E_0\cup E_2\ \text{ and }\ t\in(1,\infty).
  \end{cases}
$$
 For $y_1 \notin 2B_1$ and $y_2 \in B_2$, we have for the point $y_x:=(y_1, x_2)$ we have
$$d(x,\, y_x)=d_1(x_1,\,y_1) \simeq d(x,\,y)$$
and, hence,
$$
\exp\left(-c\frac{d^2(x,\,y)}{t}\right) \asymp \exp\left(-c\frac{d^2(x,\,y_x)}{t}\right).
$$
Note that $x_1, x_2$ and $r$ are fixed. Since $d_2(y_2,x_2)<r$, it follows that
$$\|x_2\| \simeq 1 + d_2(x_2, K) \simeq 1 + d_2(y_2, K)\simeq \|y_2\|.$$
In other words, for the second integral in \eqref{eq-GM-radialH},
by changing $y_2$ to $x_2$, it remains to be true that
\begin{align*}
G^M(x;\,y)&\asymp
\int_0^\infty D(t,x_2,x_2)  \frac1{V_1(x_1,\,\sqrt t)} \exp\left(-c\frac{d^2(x,\,y_x)}{t}\right)\,dt\notag\\
&\asymp G^M(x;\,y_x). \notag
\end{align*}
This proves that  $G^M$ satisfies the weak radial Harnack-type inequality \eqref{eq-weak-radialharnack}.

It can be seen from \eqref{eq-GM-radialH} that $G^M$ does not satisfy the radial Harnack-type inequality \eqref{eq:Gcomp-Harnack}. In fact, there are a number of examples of manifolds constructed by connected sums that satisfy  \eqref{eq-weak-radialharnack}.
\end{example}

\subsection{Proof of (\ref{G=fz-all0}) $\Rightarrow$ (\ref{Gf=fz-all}) $\Rightarrow$  (\ref{Gf=fz-some})  in Theorem \ref{thm-G-poten}} \label{ss5.2}

The implication \eqref{Gf=fz-all} $\Rightarrow$  \eqref{Gf=fz-some} is obvious. So, we only need to show
\eqref{G=fz-all0} $\Rightarrow$ \eqref{Gf=fz-all}.

\begin{proof}[Proof of \eqref{G=fz-all0} $\Rightarrow$ \eqref{Gf=fz-all} in Theorem \ref{thm-G-poten}]
Choose an arbitrary nonzero, nonnegative function $f \in \mathcal{C}_c^\infty(M)$.
Our aim is to show that if \eqref{G=fz-all0} holds, then $\mathcal{G}_{p_1,\, p_2}(f)(x) = \infty$ for all $x \in M$.

Since $f \in \mathcal{C}_c^\infty(M)$, there exists a small geodesic ball $B(o, r)$ with center $o \in M$ and radius $r \in (0,\infty)$ such that $f$ is strictly positive on the closure of $B(o, r)$. Consequently, we have
\[
c_0 := \min_{d(z,\, o)\le r} f(z) \in( 0,\infty).
\]
By \eqref{eq-compa-G},  there exists a positive constant $C=C(o,r)$ such that for all $z\in  B(o, r)$ and $y\notin B(o, 2r)$,
\begin{align*}
 C^{-1}G^M(y;\, o)\le G^M(y;\,z) \le C G^M(y;\, o)
\end{align*}
Thus, for any $y\notin B(o, 2r)$, we have
\begin{align}\label{eq-Gf>}
  G^Mf(y) & \ge \int_{B(o,\, r)} G^M(y;\, z) f(z)\, dV(z)
   \ge c_0  C^{-1} V(B(o, r))G^M(y;\, o).
\end{align}
Similarly, if we take a large $R\in(r,\infty)$  such that $\supp f\subset B(o, R)$, then for any $y\notin B(o, 2R)$,
we have
\begin{align}\label{eq-Gf<}
  G^Mf(y) & \le \int_{B(o,\, R)} G^M(y;\, z) f(z)\, dV(z)\\
  &
   \le C G^M(y;\, o) \int_{B(o,\, R)}f(z)\, dV(z)
   = C G^M(y;\, o)\|f\|_{L^1(M)}.\notag
\end{align}

Write $o = (o_1, o_2)$ and $x = (x_1, x_2)$, where $o_1, x_1 \in M_1$ and $o_2, x_2 \in M_2$.
Consider the geodesic balls
\[
B := B\!\left(o, \, R + d(o, x)\right) \subset M
\]
and, for $i = 1, 2$,
\[
B_i := B_{i}\!\left(o_i, \, 4R + 4d(o, x)\right) \subset M_i.
\]
Let $y = (y_1, y_2) \in M_1 \times M_2$ such that either $y_1 \notin B_1$ or $y_2 \notin B_2$.
Then, it follows from \eqref{eq-dg=dinfty} that $y \notin 4B$.
For such $y$, applying \eqref{eq-Gf>} when $p_2'\ge p_1'$, or \eqref{eq-Gf<} when $p_2'<p_1'$, we  always obtain
\begin{align}\label{eq-Gf-low}
\|G^M (f)(\cdot,\, y_2)\|_{L^{p_1'}(M_1)}^{p_2'-p_1'}
\gs
\|G^M(o;\, \cdot,\,y_2)\|_{L^{p_1'}(M_1)}^{p_2'-p_1'}.
\end{align}
Meanwhile, since $x \in B$ and $y \notin 4B$, we  apply \eqref{eq-compa-G} again and obtain that
\begin{equation}\label{eq2-GxGa}
C^{-1} G^M(o;\, y) \le G^M(x;\, y) \le C G^M(o;\, y)
\end{equation}
holds
for some positive constant $C = C(o, x, R)$.
By \eqref{eq-Gf-low},  \eqref{eq2-GxGa}, and the definition of ${\mathcal G}_{p_1,\, p_2}(f)$ in \eqref{eq-GGf}, we have
\begin{align*}
  &{\mathcal G}_{p_1,\, p_2}(f)(x)\\
  &\quad \ge \int_{M_2\setminus B_2}  \int_{M_1}\|G^M (f)(\cdot,\, y_2)\|_{L^{p_1'}(M_1)}^{p_2'-p_1'} G^M(x;\, y)
  \left[G^M (f)(y)\right]^{p_1'-1}\,dV_{1}(y_1)\,dV_{2}(y_2)\notag\\
  &\qquad + \int_{B_2}  \int_{M_1\setminus B_1}\|G^M (f)(\cdot,\, y_2) {\mathbf 1}_{M_1\setminus B_1}(\cdot)\|_{L^{p_1'}(M_1)}^{p_2'-p_1'} G^M(x;\, y)
  \left[G^M (f)(y)\right]^{p_1'-1}\,dV_{1}(y_1)\,dV_{2}(y_2)\notag\\
  &\quad \gs \int_{M_2\setminus B_2}  \int_{M_1}\|G^M(o;\, \cdot,\,y_2)\|_{L^{p_1'}(M_1)}^{p_2'-p_1'}
  \left[G^M(o;\, y_1,y_2)\right]^{p_1'}\,dV_{1}(y_1)\,dV_{2}(y_2)\notag\\
  &\qquad + \int_{B_2}  \int_{M_1\setminus B_1}\|G^M(o;\, \cdot,\, y_2) {\mathbf 1}_{M_1\setminus B_1}(\cdot)\|_{L^{p_1'}(M_1)}^{p_2'-p_1'} \left[G^M(o;\, y_1,y_2)\right]^{p_1'}\,dV_{1}(y_1)\,dV_{2}(y_2)\notag\\
  &\quad \simeq \int_{M_2\setminus B_2}  \|G^M(o;\, \cdot,\,y_2)\|_{L^{p_1'}(M_1)}^{p_2'}\,dV_{2}(y_2) + \int_{B_2}  \|G^M(o;\, \cdot,\, y_2) {\mathbf 1}_{M_1\setminus B_1}(\cdot)\|_{L^{p_1'}(M_1)}^{p_2'} \,dV_{2}(y_2).\notag
\end{align*}
In view of \eqref{eq-dg=dinfty}, we have \(B_1 \times B_2 \subset \tau B\) for some positive constant \(\tau\) depending only on \(x\), \(o\) and \(R\). Consequently, since
\[
M \setminus (\tau B) \subset M \setminus (B_1 \times B_2) \subset \bigl( M_1 \times (M_2 \setminus B_2) \bigr) \cup \bigl( (M_1 \setminus B_1) \times B_2 \bigr),
\]
 it follows that
\begin{align*}
&\bigl\| G^M(o;\, \cdot) \, \mathbf{1}_{M \setminus (\tau B)}(\cdot) \bigr\|_{L^{p_2'}(L^{p_1'})(M)}^{p_2'}\\
&\quad= \left(\int_{M_2 \setminus B_2} +\int_{B_2}\right)\bigl\| G^M(o;\, \cdot, y_2)\, \mathbf{1}_{M \setminus (\tau B)}(\cdot, y_2)  \bigr\|_{L^{p_1'}(M_1)}^{p_2'} \, dV_{2}(y_2)\\
&\quad\le
\int_{M_2 \setminus B_2} \bigl\| G^M(o;\, \cdot, y_2) \bigr\|_{L^{p_1'}(M_1)}^{p_2'} \, dV_{2}(y_2) \\
&\qquad + \int_{B_2} \bigl\| G^M(o;\, \cdot, y_2) \, \mathbf{1}_{M_1 \setminus B_1}(\cdot) \bigr\|_{L^{p_1'}(M_1)}^{p_2'} \, dV_{2}(y_2).
\end{align*}
Thus, under \eqref{G=fz-all0}, we conclude that
\begin{align*}
\mathcal{G}_{p_1,\, p_2}(f)(x)
\gtrsim \bigl\| G^M(o;\, \cdot) \, \mathbf{1}_{M \setminus (\tau B)}(\cdot) \bigr\|_{L^{p_2'}(L^{p_1'})(M)}^{p_2'} = \infty,
\end{align*}
as desired.
\end{proof}

\subsection{Proof of (\ref{Gf=fz-some}) $\Rightarrow$ (\ref{G=fz-all0}) in Theorem \ref{thm-G-poten}} \label{ss5.3}

\begin{proof}[Proof of (\ref{Gf=fz-some}) $\Rightarrow$ (\ref{G=fz-all0}) in Theorem \ref{thm-G-poten}]
By \eqref{Gf=fz-some}, there exist a point $o \in M$ and a nonnegative function $f \in \mathcal{C}_c^\infty(M)$ such that
\[
\mathcal{G}_{p_1,\, p_2}(f)(o) = \infty.
\]
Let $r_0$ be the large number determined in Definition \ref{def:weak-radial-Harnack}.
Choose $r \in (r_0,\infty)$ sufficiently large so that $\operatorname{supp} f \subset B(o, r)$. Moreover, we may assume without loss of generality that $G^M(o;\, \cdot) \not\equiv \infty$; otherwise \eqref{G=fz-all0} holds trivially and there is nothing to prove.

Write $o = (o_1, o_2)$, where $o_1 \in M_1$ and $o_2 \in M_2$.
Set $B := B(o, 2r)$ and $B_i := B_{i}(o_i, 2r)$ for $i = 1, 2$.
From \eqref{eq-dg=dinfty}, it follows that
\[
B \subset B_1 \times B_2 \subset 2B.
\]
By splitting the double integral in \eqref{eq-GGf} into three parts, we have
\begin{align*}
\infty &= \mathcal{G}_{p_1,\, p_2}(f)(o) \\
&= \sum_{j=1}^3 \iint_{W_j} \bigl\| G^M f(\cdot,\, x_2) \bigr\|_{L^{p_1'}(M_1)}^{p_2'-p_1'} \,
   G^M(o;\, x) \, \bigl[ G^M f(x) \bigr]^{p_1'-1} \, dV_{1}(x_1) \, dV_{2}(x_2) \\
&=: \sum_{j=1}^3 \mathrm{Z}_j,
\end{align*}
where
\[
\begin{cases}
W_1 := M_1 \times (M_2 \setminus B_2); \\[4pt]
W_2 := (M_1 \setminus B_1) \times B_2; \\[4pt]
W_3 := B_1 \times B_2.
\end{cases}
\]
Thus, at least one of the terms \(\mathrm{Z}_j\) with \(j \in \{1,2,3\}\) must satisfy \(\mathrm{Z}_j = \infty\).
Indeed, we will show that for each \(j = 1,2,3\), the following estimate holds:
\begin{equation}\label{eq-aim-Zj}
\mathrm{Z}_j \lesssim 1 +
\bigl\| G^M(o;\, \cdot) \, \mathbf{1}_{M \setminus (B_1 \times B_2)}(\cdot) \bigr\|_{L^{p_2'}(L^{p_1'})(M)}^{p_2'}.
\end{equation}
Since \(B \subset B_1 \times B_2\), it follows from \eqref{eq-aim-Zj} that
\[
\bigl\| G^M(o;\, \cdot) \, \mathbf{1}_{M \setminus B}(\cdot) \bigr\|_{L^{p_2'}(L^{p_1'})(M)} = \infty,
\]
which, together with  \eqref{G=fz-some} $\Rightarrow$ \eqref{G=fz-all} in  Theorem \ref{thm-para-Green}, completes the proof of \eqref{G=fz-all0}.

It remains to prove \eqref{eq-aim-Zj}. For this purpose, we give two useful  estimates for \(G^M f\).
For \(x = (x_1, x_2) \in B_1 \times B_2\), since \(f \in \mathcal{C}_c^\infty(M)\) and \(G^M(o;\, \cdot) \not\equiv \infty\), Lemma~\ref{lem-FG3.4} implies that \(G^M f \in \mathcal{C}^\infty(M)\) and, hence,
\begin{equation}\label{eq-Gf-bddB}
G^M f \in L^\infty(B_1 \times B_2).
\end{equation}
For any \(x \notin B\) and \(y \in \operatorname{supp} f \subset B(o,r)= \frac12 B\), it follows from \eqref{eq-compa-G} that
\[
C^{-1} G^M(x;\, o) \le G^M(x;\, y) \le C G^M(x;\, o)
\]
for some positive constant \(C = C(o, r)\). Therefore,  for any \(x = (x_1, x_2)\) with either \(x_1 \notin B_1\) or \(x_2 \notin B_2\), we obtain
\begin{equation}\label{eq-Gf-upp}
G^M f(x) = \int_{\frac12 B} G^M(x;\, y) \, f(y) \, dV(y)
\simeq \, G^M(x;\, o) \, \|f\|_{L^1(M)},
\end{equation}
where the implicit equivalence constants are independent of $x$.
With the help of \eqref{eq-Gf-bddB} and \eqref{eq-Gf-upp}, we now show that each \(\mathrm{Z}_j\) satisfies \eqref{eq-aim-Zj}.

\medskip

{\it Step 1:\, proving that \(\mathrm{Z}_1\) satisfies \eqref{eq-aim-Zj}.}
For \(x = (x_1, x_2) \in W_1\), we have \(x_2 \notin B_2\), which allows us to apply \eqref{eq-Gf-upp} to  both \(G^M f(\cdot,\, x_2)\) in the norm \(\| G^M f(\cdot,\, x_2)\|_{L^{p_1'}(M_1)}\) and for \(G^M f(x)\). This yields
\begin{align*}
\mathrm{Z}_1
&\simeq  \int_{M_2 \setminus B_2} \int_{M_1}
   \bigl\| G^M(o;\, \cdot,\, x_2) \bigr\|_{L^{p_1'}(M_1)}^{p_2'-p_1'} \,
   \bigl[ G^M(o;\, x) \bigr]^{p_1'}
   \, dV_{1}(x_1) \, dV_{2}(x_2) \\[4pt]
&\simeq \int_{M_2 \setminus B_2}
   \bigl\| G^M(o;\, \cdot,\, x_2) \bigr\|_{L^{p_1'}(M_1)}^{p_2'} \, dV_{2}(x_2). \notag
\end{align*}
Regarding this last integral, note that for any \(x_1 \in M_1\) and \(x_2 \in M_2 \setminus B_2\), we have \((x_1, x_2) \in M \setminus (B_1 \times B_2)\) and, hence,
\[
\bigl\| G^M(o;\, \cdot,\, x_2) \bigr\|_{L^{p_1'}(M_1)}^{p_2'}
= \bigl\| G^M(o;\, \cdot,\, x_2) \, \mathbf{1}_{M \setminus (B_1 \times B_2)}(\cdot,\, x_2) \bigr\|_{L^{p_1'}(M_1)}^{p_2'}.
\]
Consequently, we obtain
\begin{align*}
&\int_{M_2 \setminus B_2}
   \bigl\| G^M(o;\, \cdot,\, x_2) \bigr\|_{L^{p_1'}(M_1)}^{p_2'} \, dV_{2}(x_2)\\
&\quad= \int_{M_2 \setminus B_2}
   \bigl\| G^M(o;\, \cdot,\, x_2) \, \mathbf{1}_{M \setminus (B_1 \times B_2)}(\cdot,\, x_2) \bigr\|_{L^{p_1'}(M_1)}^{p_2'}
   \, dV_{2}(x_2) \\[4pt]
&\quad\le \bigl\| G^M(o;\, \cdot) \, \mathbf{1}_{M \setminus (B_1 \times B_2)}(\cdot) \bigr\|_{L^{p_2'}(L^{p_1'})(M)}^{p_2'}.
\end{align*}
Thus, we have proved that \(\mathrm{Z}_1\) satisfies the desired estimate \eqref{eq-aim-Zj}.

\medskip

{\it Step 2:\, proving that \(\mathrm{Z}_2\) satisfies \eqref{eq-aim-Zj}.}
Again, applying   \eqref{eq-Gf-upp} yields
\begin{align}\label{eq-Z20}
  {\rm Z}_2
  &\simeq \int_{B_2}\int_{M_1\setminus B_1} \bigl\| G^M f(\cdot,\, x_2) \bigr\|_{L^{p_1'}(M_1)}^{p_2'-p_1'} \,
\, \bigl[G^M(o;\, x) \bigr]^{p_1'} \, dV_{1}(x_1) \, dV_{2}(x_2)\\
&\simeq \int_{ B_2}\bigl\| G^M f(\cdot,\, x_2) \bigr\|_{L^{p_1'}(M_1)}^{p_2'-p_1'}
   \bigl\| G^M(o;\, \cdot,\, x_2) {\mathbf 1}_{M_1 \setminus B_1}(\cdot) \bigr\|_{L^{p_1'}(M_1)}^{p_1'} \, dV_{2}(x_2). \notag
\end{align}
Let us examine the norm \(\| G^M f(\cdot,\, x_2) \|_{L^{p_1'}(M_1)}\). For any \(x_2 \in B_2\), we write
\begin{align}\label{eq1-inner}
&\| G^M f(\cdot,\, x_2) \|_{L^{p_1'}(M_1)}^{p_1'}\\
&\quad= \left(\int_{B_1}
   + \int_{M_1 \setminus B_1}\right)\, \bigl[ G^M f(x_1, x_2) \bigr]^{p_1'}\, dV_{1}(x_1)\notag \\
&\quad\simeq \int_{B_1} \bigl[ G^M f(x_1, x_2) \bigr]^{p_1'} \, dV_{1}(x_1)
   + \int_{M_1 \setminus B_1} \bigl[ G^M(o;\, x_1, x_2) \bigr]^{p_1'} \, dV_{1}(x_1), \notag
\end{align}
where for the integral over \(M_1 \setminus B_1\) we again used \eqref{eq-Gf-upp}.
Moreover, since now \(x_2 \in B_2\), by \eqref{eq-Gf-bddB} we have
\begin{equation}\label{eq2-inner}
\int_{B_1} \bigl[ G^M f(x_1, x_2) \bigr]^{p_1'} \, dV_{1}(x_1) \lesssim 1.
\end{equation}
With \eqref{eq1-inner} and \eqref{eq2-inner} at hand, we claim that
\begin{equation}\label{eq-Z2}
\mathrm{Z}_2 \lesssim 1+\int_{B_2}
   \bigl\| G^M(o;\, \cdot,\, x_2) \, \mathbf{1}_{M_1 \setminus B_1}(\cdot) \bigr\|_{L^{p_1'}(M_1)}^{p_2'} \, dV_{2}(x_2).
\end{equation}
To prove \eqref{eq-Z2}, we treat separately the cases \(p_2 \ge p_1\) and \(p_2 < p_1\).

The proof of \eqref{eq-Z2} for the case \(p_2 \ge p_1\) is straightforward. Indeed, when \(p_2 \ge p_1\) we have \(p_2' \le p_1'\). By discarding the first term on the right-hand side of \eqref{eq1-inner}, we obtain
\begin{align*}
\| G^M f(\cdot,\, x_2) \|_{L^{p_1'}(M_1)}^{p_2'-p_1'}
&\lesssim \left( \int_{M_1 \setminus B_1} \bigl[ G^M(o;\, x_1, x_2) \bigr]^{p_1'} \, dV_{1}(x_1) \right)^{\frac{p_2'-p_1'}{p_1'}} \\
&\simeq \bigl\| G^M(o;\, \cdot,\, x_2) \, \mathbf{1}_{M_1 \setminus B_1}(\cdot) \bigr\|_{L^{p_1'}(M_1)}^{p_2'-p_1'},
\end{align*}
which, together with \eqref{eq-Z20}, implies that \eqref{eq-Z2} holds.

Next, we verify that \eqref{eq-Z2} holds when \(p_2 < p_1\). In this case, we have \(p_2' > p_1'\). Then, it follows from \eqref{eq1-inner} and \eqref{eq2-inner} that
\begin{align*}
\| G^M f(\cdot,\, x_2) \|_{L^{p_1'}(M_1)}^{p_2'-p_1'}
&\lesssim 1 + \left( \int_{M_1 \setminus B_1} \bigl[ G^M(o;\, x_1, x_2) \bigr]^{p_1'} \, dV_{1}(x_1) \right)^{\frac{p_2'-p_1'}{p_1'}} \\
&\lesssim 1 + \bigl\| G^M(o;\, \cdot,\, x_2) \, \mathbf{1}_{M_1 \setminus B_1} (\cdot) \bigr\|_{L^{p_1'}(M_1)}^{p_2'-p_1'}.
\end{align*}
Substituting this estimate into \eqref{eq-Z20} yields
\begin{align*}
\mathrm{Z}_2 &\lesssim \int_{B_2}
   \bigl\| G^M(o;\, \cdot,\, x_2) \, \mathbf{1}_{M_1 \setminus B_1}(\cdot) \bigr\|_{L^{p_1'}(M_1)}^{p_1'} \, dV_{2}(x_2) \\
   &\qquad + \int_{B_2}
   \bigl\| G^M(o;\, \cdot,\, x_2) \, \mathbf{1}_{M_1 \setminus B_1} (\cdot)\bigr\|_{L^{p_1'}(M_1)}^{p_2'} \, dV_{2}(x_2).
\end{align*}
Regarding the first integral on the right-hand side, since \(p_2' > p_1'\),   by comparing the norm
\[\| G^M(o;\, \cdot,\, x_2) \, \mathbf{1}_{M_1 \setminus B_1}(\cdot) \|_{L^{p_1'}(M_1)}\] with the value \(1\),
we obtain the elementary estimate
\[
\bigl\| G^M(o;\, \cdot,\, x_2) \, \mathbf{1}_{M_1 \setminus B_1}(\cdot) \bigr\|_{L^{p_1'}(M_1)}^{p_1'}
\le 1 + \bigl\| G^M(o;\, \cdot,\, x_2) \, \mathbf{1}_{M_1 \setminus B_1}(\cdot) \bigr\|_{L^{p_1'}(M_1)}^{p_2'}.
\]
This immediately yields
\begin{equation*}
\mathrm{Z}_2 \lesssim 1 + \int_{B_2}
   \bigl\| G^M(o;\, \cdot,\, x_2) \, \mathbf{1}_{M_1 \setminus B_1}(\cdot) \bigr\|_{L^{p_1'}(M_1)}^{p_2'} \, dV_{2}(x_2).
\end{equation*}
Consequently, \eqref{eq-Z2} remains valid in the case \(p_2 < p_1\).

Having established \eqref{eq-Z2}, we now observe that for any \(x_2 \in B_2\),
\begin{equation}\label{eq-obser-xx}
   \bigl\| G^M(o;\, \cdot,\, x_2) \, \mathbf{1}_{M_1 \setminus B_1}(\cdot) \bigr\|_{L^{p_1'}(M_1)}
   = \bigl\| G^M(o;\, \cdot,\, x_2) \, \mathbf{1}_{(M_1 \setminus B_1) \times B_2}(\cdot,\, x_2) \bigr\|_{L^{p_1'}(M_1)}.
\end{equation}
Combining this observation with \eqref{eq-Z2}, we conclude that \(\mathrm{Z}_2\) satisfies \eqref{eq-aim-Zj}.

\medskip

{\it Step 3:\, proving that \(\mathrm{Z}_3\) satisfies \eqref{eq-aim-Zj}.}
By \eqref{eq-Gf-bddB}, we have
\begin{align}\label{eq-Z3}
{\rm Z}_3
&\ls
\int_{B_2}\int_{B_1} \bigl\| G^M f(\cdot,\, x_2) \bigr\|_{L^{p_1'}(M_1)}^{p_2'-p_1'} \,
   G^M(o;\, x) \, dV_{1}(x_1) \, dV_{2}(x_2).
\end{align}
For any \(x_2 \in B_2\), the norm \(\| G^M f(\cdot,\, x_2) \|_{L^{p_1'}(M_1)}\) above still satisfies \eqref{eq1-inner} and \eqref{eq2-inner}. We now proceed by considering the cases \(p_2 \ge p_1\) and \(p_2 < p_1\).

Let \(p_2 \ge p_1\).
By Lemma~\ref{lem-G-property}, we have \(G^M(o;\, \cdot) \in \mathcal{C}^\infty(M \setminus \{o\})\) and it is positive. Thus, on the precompact open set \((2B_1 \setminus B_1) \times B_2\), which does not contain \(o\), there exists a constant \(c \in (0,1)\) such that for all
\(x_1 \in 2B_1\setminus B_1\) and \(x_2 \in B_2\),
\[
G^M(o;\, x_1, x_2) \ge c.
\]
This, combined with \eqref{eq1-inner}, gives
\begin{align*}
\| G^M f(\cdot,\, x_2) \|_{L^{p_1'}(M_1)}^{p_1'}
\gtrsim \int_{2B_1\setminus B_1} \bigl[ G^M(o;\, x_1, x_2) \bigr]^{p_1'} \, dV_{1}(x_1) \gtrsim 1.
\end{align*}
Since \(p_2' - p_1' \le 0\), we return to \eqref{eq-Z3} and get
\begin{align*}
\mathrm{Z}_3
&\lesssim \int_{B_2} \int_{B_1} G^M(o;\, x) \, dV_{1}(x_1) \, dV_{2}(x_2).
\end{align*}
Choose a nonnegative function \(\phi \in \mathcal{C}_c^\infty(M)\) such that \(\phi \equiv 1\) on \(B_1 \times B_2\).
By Lemma~\ref{lem-FG3.4}, we have \(G^M(\phi) \in \mathcal{C}^\infty(M)\), which further implies
\begin{align*}
\mathrm{Z}_3
&\lesssim \int_{B_2} \int_{B_1} G^M(o;\, x) \, \phi(x) \, dV_{1}(x_1) \, dV_{2}(x_2)
\lesssim G^M(\phi)(o)\lesssim 1.
\end{align*}
This proves that \(\mathrm{Z}_3\) satisfies \eqref{eq-aim-Zj} when $p_2\ge p_1$.

Let now \(p_2 < p_1\), that is, \(p_2' > p_1'\).
In this case, substituting  \eqref{eq1-inner} and \eqref{eq2-inner} (with $2B_1$ now replaced by $B_1$ therein) into \eqref{eq-Z3}, we  obtain
\begin{align*}
\mathrm{Z}_3
&\lesssim \int_{B_2}\int_{B_1} \left(1 + \bigl\| G^M(o;\, \cdot,\, x_2) \, \mathbf{1}_{M_1 \setminus (2B_1)}(\cdot) \bigr\|_{L^{p_1'}(M_1)}\right)^{p_2'-p_1'}
   G^M(o;\, x) \, dV_{1}(x_1) \, dV_{2}(x_2).
\end{align*}
By the weak radial Harnack-type inequality \eqref{eq-weak-radialharnack},
the Green function satisfies
\begin{align*}
G^M(o;\, \cdot,\, x_2) \, \mathbf{1}_{M_1 \setminus (2B_1)}(\cdot)\simeq G^M(o;\, \cdot,\, o_2) \, \mathbf{1}_{M_1 \setminus (2B_1)}(\cdot)\quad \text{for all}\ \, x_2\in B_2.
\end{align*}
This, along with the fact that $(1+t)^{p_2'-p_1'}\le (1+t)^{p_2'}$ for any $t\in(0,\infty)$,
and the local integrability of $G^M(o;\,\cdot)$ on $(2B_1)\times B_2$, further gives
\begin{align*}
\mathrm{Z}_3
&\lesssim \left(1 + \bigl\| G^M(o;\, \cdot,\, o_2) \, \mathbf{1}_{M_1 \setminus (2B_1)}(\cdot) \bigr\|_{L^{p_1'}(M_1)}\right)^{p_2'-p_1'} \int_{B_2}\int_{B_1}
   G^M(o;\, x) \, dV_{1}(x_1) \, dV_{2}(x_2)\\
&\ls \left(1 + \bigl\| G^M(o;\, \cdot,\, o_2) \, \mathbf{1}_{M_1 \setminus (2B_1)}(\cdot) \bigr\|_{L^{p_1'}(M_1)}\right)^{p_2'}\\
&\simeq
\frac1{V_{2}(B_2)}\int_{B_2} \left(1 + \bigl\| G^M(o;\, \cdot,\, x_2) \, \mathbf{1}_{M_1 \setminus (2B_1)}(\cdot) \bigr\|_{L^{p_1'}(M_1)}\right)^{p_2'} \, dV_{2}(x_2)\\
&\ls 1+ \bigl\| G^M(o;\, \cdot) \, \mathbf{1}_{M \setminus (B_1 \times B_2)}(\cdot) \bigr\|_{L^{p_2'}(L^{p_1'})(M)}^{p_2'}.
\end{align*}
Finally, invoking the observation \eqref{eq-obser-xx}, we obtain  that \(\mathrm{Z}_3\) satisfies \eqref{eq-aim-Zj}.

Summarizing all, we complete the proof of (\ref{Gf=fz-some}) $\Rightarrow$ (\ref{G=fz-all0}).
\end{proof}

\section{Applications of Theorem~\ref{thm-main}} \label{s6}

\subsection{Geometric criteria via volume growth}\label{ss6.1}

\begin{proposition}\label{prop-para-vol}
For $i \in\{ 1, 2\}$, let $p_i \in (1, \infty)$ and $(M_i, d_i, V_i)$ be a noncompact geodesically complete Riemannian manifold with $\operatorname{Ric}_{M_i} \ge 0$.
Then  $M := M_1 \times M_2$ is $L^{p_2}(L^{p_1})$-parabolic if and only if
\footnote{Note that $V_1'(r)$ and $V_2'(r)$ exist for almost all $r\in(0,\infty)$.}
\begin{equation}\label{eq1-para-vol}
\int^\infty \left[ \int^\infty \left( \int_{r \vee s}^\infty \frac{t \, dt}{V_1(t) V_2(t)} \right)^{p_1'} V_1'(r) \, dr \right]^{\frac{p_2'}{p_1'}} V_2'(s) \, ds = \infty,
\end{equation}
where \(V_i(t) := V_i(o_i, t)\) for \(i \in \{1, 2\}\) and \(t \in (0, \infty)\), and \(o = (o_1, o_2) \in M\) is some fixed point.
\end{proposition}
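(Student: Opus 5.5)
By Theorem~\ref{thm-main} (specifically the equivalence (i)$\Leftrightarrow$(ii) of Theorem~\ref{thm-para-Green}), it suffices to show that the integral in \eqref{eq1-para-vol} diverges if and only if
$$\bigl\|G^M(o;\cdot)\mathbf 1_{M\setminus B(o,r_0)}\bigr\|_{L^{p_2'}(L^{p_1'})(M)}=\infty$$
for some (hence every) $r_0>0$.

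\emph{Step 1: two-sided Green estimate.} Since $\operatorname{Ric}_{M_i}\ge 0$, Li--Yau gives $\SG$ on each factor. Lemma~\ref{lem-Green-Product} then gives $\SG$ on $M$, and \eqref{eq-Gxy} together with \eqref{eq1-Vprod} yields, for $y\neq o$,
$$G^M(o;y)\simeq \int_{d(o,y)}^\infty \frac{t\,dt}{V_1(t)V_2(t)}.$$
By \eqref{eq-dg=dinfty} and the doubling of $t\mapsto V_1(t)V_2(t)$, the lower limit $d(o,y)$ may be replaced by $d_1(o_1,y_1)\vee d_2(o_2,y_2)$ at the cost of constants. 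So $G^M(o;y)\simeq F(\rho_1\vee\rho_2)$, where
$$F(u):=\int_u^\infty \frac{t\,dt}{V_1(t)V_2(t)},\qquad \rho_i:=d_i(o_i,y_i).$$
If $F\equiv\infty$, then both sides of the desired equivalence hold trivially. Indeed, $G^M\equiv\infty$ gives (ii), and the inner integral in \eqref{eq1-para-vol} is infinite. So we may assume $F$ is finite and decreasing.

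\emph{Step 2: reduce the mixed norm to radial integrals.} The function $y\mapsto F(\rho_1\vee\rho_2)$ is radial in each factor separately. The coarea formula for the distance function on $M_i$ (with $|\nabla d_i|=1$ a.e.) gives $dV_i$-pushforward $V_i'(r)\,dr$ on $(0,\infty)$, since $V_i$ is locally Lipschitz. Therefore
$$\|G^M(o;\cdot,y_2)\mathbf 1_{\{\rho_1\vee\rho_2\ge R\}}\|_{L^{p_1'}(M_1)}^{p_1'}\simeq \int_0^\infty F(r\vee s)^{p_1'}\mathbf 1_{\{r\vee s\ge R\}}\,V_1'(r)\,dr$$
with $s=\rho_2$. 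Integrating in $y_2$ the same way gives $\|G^M(o;\cdot)\mathbf 1_{M\setminus Q_R}\|^{p_2'}_{L^{p_2'}(L^{p_1'})}$ as the integral in \eqref{eq1-para-vol}, restricted to $r\vee s\ge R$, where $Q_R$ is the $d_\infty$-ball. Since $B(o,R)\subset Q_R\subset B(o,\sqrt2R)$, divergence over complements of $B(o,r_0)$ matches divergence over complements of $Q_R$.

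\emph{Step 3: divergence is a property of the tail.} It remains to check that divergence of the full integral "$\int^\infty[\int^\infty\cdots]$" (lower limits unspecified, i.e. near infinity) is equivalent to divergence of the integral over $\{r\vee s\ge R\}$. On the bounded region $\{r,s\le R\}$ the contribution vanishes after restriction. The mixed regions $\{r\le R\le s\}$ and $\{s\le R\le r\}$ differ from the region with truncated lower limits only by bounded-measure pieces in which $F(r\vee s)$ is bounded, since $F(u)\le F(R_0)<\infty$ for $u\ge R_0>0$. Adding or removing such pieces preserves finiteness. This holds even inside the outer power $p_2'/p_1'$, using $(a+b)^q\lesssim a^q+b^q$ and the fact that $V_1,V_2$ are finite on bounded radii.

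\emph{Main obstacle.} I expect the main difficulty to be Step 2's passage from $d$ to $d_\infty$ inside $F$ with uniform constants, together with justifying the coarea identity for a merely Lipschitz $V_i$ (the footnote's a.e.\ derivative). To handle the first, I would use doubling: $F(u)\simeq F(\sqrt2 u)$. This follows because $\int_u^{\sqrt2u}\frac{t\,dt}{V_1V_2}\lesssim \int_{\sqrt2u}^{2u}\frac{t\,dt}{V_1V_2}$ by doubling, so $F(u)\le C\,F(\sqrt2 u)$.
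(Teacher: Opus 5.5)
You have a genuine gap in Step~3. Steps~1--2 follow the paper's own reduction and are fine: Li--Yau on the factors and on $M$, then \eqref{eq-Gxy} with $V(o,t)\simeq V_1(t)V_2(t)$, then replacing $d$ by $d_1\vee d_2$ via doubling, then the coarea formula.

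The set $\{r\vee s\ge R\}$ is the union of three pieces:
\begin{itemize}
\item the far quadrant $\{r\ge R,\ s\ge R\}$, which is what \eqref{eq1-para-vol} measures;
\item the strip $\{r<R\le s\}$;
\item the strip $\{s<R\le r\}$.
\end{itemize}
The strips are not bounded-measure pieces. Since $V_i(t)\to\infty$, they have infinite $V_1'(r)\,dr\otimes V_2'(s)\,ds$ measure. Put $q:=p_2'/p_1'$. Using $(a+b)^q\simeq a^q+b^q$, the mixed norm splits, up to constants, into the far-quadrant term plus two strip terms. The first strip term is
\[
V_1(R)^{q}\int_R^\infty F(s)^{p_2'}\,V_2'(s)\,ds .
\]
The bound $F\le F(R)$ on this strip says nothing about whether this integral over $[R,\infty)$ is finite. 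The inequality $(a+b)^q\lesssim a^q+b^q$ only separates the term off; it does not control it. So you have proved only that divergence of \eqref{eq1-para-vol} implies parabolicity, which follows by monotonicity. The converse, that parabolicity implies \eqref{eq1-para-vol}, is not established. This, and not the $d$ versus $d_\infty$ comparison or the coarea formula, is the real obstacle.

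The missing idea is to dominate each strip by part of the far quadrant, using that $F(r\vee s)$ is constant in the smaller variable.
\begin{itemize}
\item First strip. For $s\ge 2R$ and $r\le 2R$ we have $F(r\vee s)=F(s)$. So the far-quadrant integral is at least $\bigl(V_1(2R)-V_1(R)\bigr)^{q}\int_{2R}^\infty F(s)^{p_2'}V_2'(s)\,ds$, where $V_1(2R)>V_1(R)$ because $M_1$ is noncompact. The leftover range $s\in[R,2R]$ costs at most $F(R)^{p_2'}V_2(2R)$.
\item Second strip. For $s<R$ the inner integral is $A:=\int_R^\infty F(r)^{p_1'}V_1'(r)\,dr$, independent of $s$, so this strip term equals $V_2(R)A^{q}$. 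For $s\in[R,2R]$ and $r\ge 2R$ we have $F(r\vee s)=F(r)$. Hence the far quadrant is at least $\bigl(V_2(2R)-V_2(R)\bigr)\bigl(\int_{2R}^\infty F(r)^{p_1'}V_1'(r)\,dr\bigr)^{q}$. This integral differs from $A$ by at most $F(R)^{p_1'}V_1(2R)$.
\end{itemize}
Thus finiteness of the far-quadrant integral forces finiteness of both strip terms. This is exactly the last step of the paper's proof, where it appears as ${\rm I}(R)<\infty\Rightarrow {\rm II}(R),{\rm III}(R)<\infty$. Only with this domination does divergence of the norm over $M\setminus Q_R$ yield divergence of \eqref{eq1-para-vol}.
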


\begin{proof}
According to Theorem \ref{thm-main} or Theorem~\ref{thm-para-Green}, $M$ is $L^{p_2}(L^{p_1})$-parabolic  if and only if for some, equivalently all, $R \in (0, \infty)$,
\begin{equation}\label{eq1-G=fz}
  \left\|G^M(o;\ \cdot) {\mathbf 1}_{M\setminus B(o,\,R)}(\cdot)\right\|_{L^{p_2'}(L^{p_1'})(M)}=\infty.
\end{equation}
For $i=1,2$ and $R\in (0,\infty)$, set
$$
   B_R^i := B_{i}(o_i,\, R) \subset M_i.
$$
Using
\eqref{eq-dg=dinfty}, it is easy to verify that
$$\frac 12 B_R^1 \times \frac 12 B_R^2\subset B(o, R) \subset B_R^1\times B_R^2.$$
This, along with \eqref{eq1-G=fz} and the local boundedness of $G^M(o; \,\cdot\,)$ in $M \setminus \{o\}$, implies that $M$ is $L^{p_2}(L^{p_1})$-parabolic if and only if for some/all $R \in (0, \infty)$,
\begin{equation}\label{eq2-G=fz}
\bigl\| G^M(o; \,\cdot\,) \, \mathbf{1}_{M \setminus (B_R^1 \times B_R^2)}(\cdot) \bigr\|_{L^{p_2'}(L^{p_1'})(M)} = \infty.
\end{equation}

For $i = 1, 2$, since $(M_i, d_i, V_i)$ is geodesically complete and satisfies $\operatorname{Ric}_{M_i} \ge 0$,
it follows that the heat kernel $\{p_t^{M_i}\}_{t\in(0,\infty)}$ satisfies $\SG$ and $(M_i, d_i, V_i)$ satisfies $\vd$.
Applying Lemma~\ref{lem-Green-Product} yields that  $G^M(x;\,y)$ satisfies both \eqref{eq-Gxy} and \eqref{eq-Gcomp-equiv}. Consequently, \eqref{eq2-G=fz} holds if and only if
\begin{equation}\label{eq3-G=fz}
  \int_{M_2} \left[ \int_{M_1} \left( \int_{d(x,\, o)}^\infty \frac{t \, dt}{V_1(t) V_2(t)} \right)^{p_1'}\, \mathbf{1}_{M \setminus (B_R^1 \times B_R^2)}(x)\, dV_{1}(x_1) \right]^{\frac{p_2'}{p_1'}} \, dV_{2}(x_2) =\infty.
\end{equation}
Using \eqref{eq-dg=dinfty} and the fact that $(M_i, d_i, V_i)$  satisfies \(\vd\), we may replace the lower limit \(d(x, o)\) of the inner integration by \(\max\{d_1(x_1, o_1),\, d_2(x_2, o_2)\}\), where $x=(x_1,x_2)\in M$ with $x_i\in M_i$.
Further, observing  that
$$
M\setminus (B_R^1 \times B_R^2) = \left((M_1\setminus B_R^1)\times (M_2\setminus B_R^2)\right)
\cup \left(B_R^1 \times (M_2\setminus B_R^2) \right) \cup \left( (M_1\setminus B_R^1)\times B_R^2 \right),
$$
we then obtain that \eqref{eq3-G=fz} holds if and only if either one of the three parts is $\infty$:
$$
\begin{cases}
  {\rm I}(R):= \displaystyle\int_{M_2\setminus B_R^2} \left[ \int_{M_1\setminus B_R^1} \left( \int_{d_{1}(x_1,\, o_1)\vee d_{2}(x_2,\, o_2)}^\infty \frac{t \, dt}{V_1(t) V_2(t)} \right)^{p_1'} dV_{1}(x_1) \right]^{\frac{p_2'}{p_1'}} \, dV_{2}(x_2) =\infty;\\
  {\rm II}(R):= \displaystyle\int_{M_2\setminus B_R^2} \left[ \int_{B_R^1} \left( \int_{d_{2}(x_2,\, o_2)}^\infty \frac{t \, dt}{V_1(t) V_2(t)} \right)^{p_1'} dV_{1}(x_1) \right]^{\frac{p_2'}{p_1'}} \, dV_{2}(x_2) =\infty;\\
  {\rm III}(R):= \displaystyle\int_{B_R^2} \left[ \int_{M_1\setminus B_R^1} \left( \int_{d_{1}(x_1,\, o_1)}^\infty \frac{t \, dt}{V_1(t) V_2(t)} \right)^{p_1'} dV_{1}(x_1) \right]^{\frac{p_2'}{p_1'}} \, dV_{2}(x_2) =\infty.
\end{cases}
$$

Further, by using the coarea formula to get that
$$
\int_{M_i} F(d_i(o_i,\,z))\, dV_i(z)=\int_0^\infty F(r)\, dV_i(o_i, r)
$$
and noting that $dV_i(o_i, r)=V_i'(r)\,dr$ for a. e. $r\in(0,\infty)$,
we have
\begin{align*}
  {\rm I}(R)\simeq \displaystyle\int_R^\infty \left[ \int_R^\infty \left( \int_{r\vee s}^\infty \frac{t \, dt}{V_1(t) V_2(t)} \right)^{p_1'}\, V_1'(r)\,dr \right]^{\frac{p_2'}{p_1'}} \, V_2'(s)\,ds.
\end{align*}
which implies that ${\rm I}(R)=\infty$ if and only if \eqref{eq1-para-vol} holds. Consequently, if \eqref{eq1-para-vol} holds, then ${\rm I}(R) = \infty$ for some $R$, and hence \eqref{eq3-G=fz} holds; we thereby obtain that $M$ is $L^{p_2}(L^{p_1})$-parabolic.

Next, we show that if $M$ is $L^{p_2}(L^{p_1})$-parabolic, i.e., \eqref{eq3-G=fz} holds, then ${\rm I}(R)=\infty$ for all $R\in(0,\infty)$. To this end, assume to the contrary that ${\rm I}(R)<\infty$ for some $R\in(0,\infty)$.
Then, on the one hand, we have
\begin{align*}
{\rm I}(R)&\ge \int_{3R}^\infty \left[ \int_R^{2R} \left( \int_{r\vee s}^\infty \frac{t \, dt}{V_1(t) V_2(t)} \right)^{p_1'}\, V_1'(r)\,dr \right]^{\frac{p_2'}{p_1'}} \, V_2'(s)\,ds\\
&\simeq  \int_{3R}^\infty \left[ \int_R^{2R} \left( \int_{s}^\infty \frac{t \, dt}{V_1(t) V_2(t)} \right)^{p_1'}\, V_1'(r)\,dr \right]^{\frac{p_2'}{p_1'}} \, V_2'(s)\,ds\\
&\simeq  \left[ \int_R^{2R}  V_1'(r)\,dr \right]^{\frac{p_2'}{p_1'}} \int_{3R}^\infty  \left( \int_{s}^\infty \frac{t \, dt}{V_1(t) V_2(t)} \right)^{p_2'} \, V_2'(s)\,ds\\
& \simeq \int_{3R}^\infty  \left( \int_{s}^\infty \frac{t \, dt}{V_1(t) V_2(t)} \right)^{p_2'} \, V_2'(s)\,ds.
\end{align*}
On the other hand,
\begin{align*}
  {\rm II}(R)&\simeq \int_R^\infty \left[ \int_0^R \left( \int_{s}^\infty \frac{t \, dt}{V_1(t) V_2(t)} \right)^{p_1'} \, V_1'(r)\,dr \right]^{\frac{p_2'}{p_1'}} \, V_2'(s)\,ds\\
  &\simeq \left[V_1(R)\right]^{\frac{p_2'}{p_1'}} \int_R^\infty  \left( \int_{s}^\infty \frac{t \, dt}{V_1(t) V_2(t)} \right)^{p_2'} \, V_2'(s)\,ds\\
 & \simeq \int_R^\infty  \left( \int_{s}^\infty \frac{t \, dt}{V_1(t) V_2(t)} \right)^{p_2'} \, V_2'(s)\,ds.
\end{align*}
Thus, ${\rm I}(R) < \infty$ forces ${\rm II}(R) < \infty$. In a similar manner, one can show that ${\rm I}(R) < \infty$ also forces ${\rm III}(R) < \infty$.
Consequently, if \eqref{eq3-G=fz} holds, then we must have ${\rm I}(R) = \infty$ for every $R \in (0, \infty)$.
\end{proof}

\begin{corollary}\label{cor-para-mix}
For $i = 1, 2$, let $(M_i, d_i, V_i)$ be a noncompact geodesically complete Riemannian manifold with $\operatorname{Ric}_{M_i} \ge 0$.
Then, for any $p_1, p_2 \in (1, \infty)$, if either
\[
\int^\infty  \left( \int_{s}^\infty \frac{t \, dt}{V_1(t) V_2(t)} \right)^{p_2'} \, V_2'(s)\,ds = \infty
\]
or
\[
\int^\infty  \left( \int_{r}^\infty \frac{t \, dt}{V_1(t) V_2(t)} \right)^{p_1'} \, V_1'(r)\,dr = \infty,
\]
then the Riemannian product manifold $M := M_1 \times M_2$ is $L^{p_2}(L^{p_1})$-parabolic.
\end{corollary}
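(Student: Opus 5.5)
The approach is to go through Proposition~\ref{prop-para-vol}. By that result it suffices to show that each of the two hypotheses forces \eqref{eq1-para-vol}, i.e.\ ${\rm I}(R)=\infty$ for some (equivalently all) $R$. Equivalently, working with \eqref{eq3-G=fz}, it suffices to show that ${\rm II}(R)=\infty$ or ${\rm III}(R)=\infty$. In the proof of Proposition~\ref{prop-para-vol} we already saw that any one of ${\rm I}(R),{\rm II}(R),{\rm III}(R)$ being infinite gives \eqref{eq3-G=fz}, hence \eqref{eq2-G=fz}, hence $L^{p_2}(L^{p_1})$-parabolicity via Theorem~\ref{thm-para-Green}.

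First hypothesis. Use the computation of ${\rm II}(R)$ given there. Integrating out $x_1\in B_R^1$ contributes the constant factor $V_1(R)^{p_2'/p_1'}$, since the inner integrand depends only on $s=d_2(x_2,o_2)$. Then the coarea formula gives
$$
{\rm II}(R)\simeq V_1(R)^{p_2'/p_1'}\int_R^\infty\left(\int_s^\infty \frac{t\,dt}{V_1(t)V_2(t)}\right)^{p_2'}V_2'(s)\,ds .
$$
Under the first hypothesis this integral diverges (the lower limit $R$ is irrelevant for divergence "at infinity"), so ${\rm II}(R)=\infty$.

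Second hypothesis. Treat ${\rm III}(R)$ symmetrically. Here the inner integrand depends only on $r=d_1(x_1,o_1)$, so the inner $L^{p_1'}$-integral is independent of $x_2$. The outer integral over $B_R^2$ then just multiplies by $V_2(R)$:
$$
{\rm III}(R)\simeq V_2(R)\left[\int_R^\infty\left(\int_r^\infty \frac{t\,dt}{V_1(t)V_2(t)}\right)^{p_1'}V_1'(r)\,dr\right]^{p_2'/p_1'},
$$
which is infinite under the second hypothesis.

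The only points needing care are the following:
\begin{itemize}
\item replacing $d(x,o)$ by $d_1\vee d_2$ in the lower limit, which relies on \vd\ for each factor and was already justified in the proof of Proposition~\ref{prop-para-vol};
\item the finiteness and positivity of $V_i(R)$;
\item the convention that "$\int^\infty$" means divergence near infinity, which is insensitive to the lower cutoff because the integrands are locally bounded on $[R,\infty)$.
\end{itemize}

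I expect no real obstacle. The mild subtlety is simply to argue via \eqref{eq3-G=fz} and the three-piece decomposition, rather than directly via \eqref{eq1-para-vol}, since the equivalence "${\rm I}=\infty$ iff (ii) or (iii) infinite" is exactly what the earlier proof established.
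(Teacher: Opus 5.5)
Your argument is correct and is essentially the paper's: the paper just observes, from the computations in the proof of Proposition~\ref{prop-para-vol}, that either hypothesis forces \eqref{eq1-para-vol}. That proof shows ${\rm I}(R)<\infty$ forces ${\rm II}(R)<\infty$ and ${\rm III}(R)<\infty$, which is the contrapositive form of your route through ${\rm II}(R)=\infty$ or ${\rm III}(R)=\infty$ and \eqref{eq3-G=fz}. One small correction: your closing remark overstates what was proved, since ${\rm II}(R)=\infty$ or ${\rm III}(R)=\infty$ implies ${\rm I}(R)=\infty$ but not conversely (for $\mathbb R^3\times\mathbb R^3$ with $p_1=p_2=3$ one has ${\rm I}(R)=\infty$ while ${\rm II}(R),{\rm III}(R)<\infty$), though your argument only uses the forward implication.
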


\begin{proof}
This can be seen from the proof of Proposition~\ref{prop-para-vol}, since either condition implies \eqref{eq1-para-vol}.
\end{proof}

\begin{proposition}\label{prop-para-mix-Rn}
	Let $p_1,p_2\in(1,\infty)$. For $i\in\{1,2\}$, let $(M_i,d_i,V_i)$ be a connected, geodesically complete Riemannian manifold with $\operatorname{Ric}_{M_i}\ge 0$.
 Assume that  $M=M_1\times M_2$ is noncompact.
Suppose that each factor $M_i$ is either compact, in which case we set $n_i:=0$, or has polynomial volume growth of order $n_i\in(0,\infty)$ in the following radial sense: for a fixed point $o_i\in M_i$,
	\begin{equation}\label{eq-polynomial-growth-Ni}
	V_i(o_i,t)\simeq t^{n_i}
		\quad	\text{and}\quad		V_i'(o_i,t)\simeq t^{n_i-1}
		\quad\text{for a.e.}\ \,t\in (1,\infty),
	\end{equation}
where the implicit constants are independent of $t$.
	Then,
	\begin{equation}\label{eq-generalized-product-threshold}
	M=M_1\times M_2
\ \  \text{is}\ \ L^{p_2}(L^{p_1})\text{-parabolic}
	\quad\Leftrightarrow\quad	\frac{n_1}{p_1}+\frac{n_2}{p_2}\le 2 .
	\end{equation}
	In particular, for $k_1,k_2\in\nn$, if
	$
	M_1=\mathbb R^{n_1}\times {\mathbb S}^{k_1}$ and $M_2=\mathbb R^{n_2}\times {\mathbb S}^{k_2}$,
	then
	\[
	M_1\times M_2
\ \ \text{is}\ \ L^{p_2}(L^{p_1})\text{-parabolic}
	\quad\Leftrightarrow\quad
	\frac{n_1}{p_1}+\frac{n_2}{p_2}\le2 .
	\]
\end{proposition}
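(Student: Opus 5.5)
The plan is to apply Proposition~\ref{prop-para-vol} and then evaluate the iterated integral \eqref{eq1-para-vol} explicitly under \eqref{eq-polynomial-growth-Ni}. Assume first that both factors are noncompact, so $n_1,n_2>0$. The hypothesis $\operatorname{Ric}_{M_i}\ge0$ lets us invoke Proposition~\ref{prop-para-vol} directly. For large $t$ we have $V_1(t)V_2(t)\simeq t^{N}$ with $N:=n_1+n_2$. If $N\le 2$, the innermost integral $\int^\infty t^{1-N}\,dt$ diverges, so \eqref{eq1-para-vol} equals $\infty$; since $n_1/p_1+n_2/p_2<N\le2$ in this case, both sides of \eqref{eq-generalized-product-threshold} hold. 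If $N>2$, then for $r\vee s\ge1$,
$$\int_{r\vee s}^\infty\frac{t\,dt}{V_1(t)V_2(t)}\simeq (r\vee s)^{2-N}.$$
It then remains to decide when
$$J:=\int_1^\infty\Bigl[\int_1^\infty (r\vee s)^{(2-N)p_1'}r^{n_1-1}\,dr\Bigr]^{p_2'/p_1'}s^{n_2-1}\,ds$$
is infinite. As in the proof of Proposition~\ref{prop-para-vol}, divergence of the tail over $[1,\infty)$ is equivalent to divergence of the integral from any fixed lower limit, because the integrand is locally bounded.

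To compute the inner integral, set $a:=(N-2)p_1'>0$ and split it at $r=s$. The piece $\int_1^s s^{-a}r^{n_1-1}\,dr\simeq s^{n_1-a}$ is finite because $n_1>0$. The piece $\int_s^\infty r^{n_1-1-a}\,dr$ is finite iff $a>n_1$, and then it is $\simeq s^{n_1-a}$. So if $a\le n_1$, the inner integral is $\infty$ for every $s$ and $J=\infty$. The condition $a\le n_1$ reads $(N-2)\le n_1/p_1'=n_1-n_1/p_1$, that is, $n_1/p_1+n_2\le2$; this forces $n_1/p_1+n_2/p_2\le2$, which is consistent with the claim. If instead $a>n_1$, the inner integral is $\simeq s^{n_1-a}$, and
$$J\simeq\int_1^\infty s^{(n_1-a)p_2'/p_1'+n_2-1}\,ds .$$
This diverges iff $(n_1-a)p_2'/p_1'+n_2\ge0$. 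Dividing by $p_2'$ gives $n_1/p_1'-(N-2)+n_2/p_2'\ge0$, i.e. $n_1/p_1+n_2/p_2\le2$. Combining the cases yields \eqref{eq-generalized-product-threshold}.

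Now suppose one factor is compact, say $M_2$ with $n_2=0$; $M_1$ must then be noncompact because $M$ is. Proposition~\ref{prop-para-vol} assumes noncompact factors, so here I would go back to Theorem~\ref{thm-para-Green} and Lemma~\ref{lem-Green-Product}. The product still satisfies $\SG$, so \eqref{eq-Gxy} gives $G^M(o;x)\simeq\int_{d(x,o)}^\infty t^{1-n_1}\,dt$, and $d(x,o)\simeq d_1(x_1,o_1)$ for large $x$. The mixed norm then reduces to $V_2(M_2)^{1/p_2'}$ times the $L^{p_1'}$ norm over $M_1$. This gives the one-dimensional condition $\int^\infty(\int_r^\infty t^{1-n_1}\,dt)^{p_1'}r^{n_1-1}\,dr=\infty$, which by the same calculation is equivalent to $n_1/p_1\le2$. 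The case where $M_1$ is compact is symmetric.

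For the example $M_i=\mathbb R^{n_i}\times\mathbb S^{k_i}$, one checks that $\operatorname{Ric}\ge0$ and that $V_i(t)\simeq t^{n_i}$, $V_i'\simeq t^{n_i-1}$ for $t>1$, since the sphere factor only contributes a bounded multiplicative factor. The main obstacle I expect is the bookkeeping in the case $a\le n_1$ (where $J=\infty$ trivially) and checking that this case really lies inside the threshold region. A secondary point is justifying that the compact-factor case can be handled without the noncompactness hypothesis of Proposition~\ref{prop-para-vol}.
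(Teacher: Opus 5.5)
Your proposal is correct and follows essentially the same route as the paper: reduce via Proposition~\ref{prop-para-vol} to the iterated integral, dispose of the case $N\le 2$, split the inner $r$-integral at $r=s$, and read off the threshold $\frac{n_1}{p_1}+\frac{n_2}{p_2}\le 2$ from the two exponent conditions, with the compact-factor case reduced to the one-factor criterion $n_1/p_1\le 2$. Your handling of the compact factor goes back to Theorem~\ref{thm-para-Green} and the Green function estimate \eqref{eq-Gxy} instead of citing Proposition~\ref{prop-para-vol}, whose hypotheses require noncompact factors; this is a slightly more careful justification of a step the paper covers in one line.
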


\begin{proof}
For $i\in\{1,2\}$, let $q_i:=p_i'$. We first consider the case in which both $M_1$ and $M_2$ are noncompact. By Theorem~\ref{thm-para-Green} and Proposition~\ref{prop-para-vol}, we obtain that $M=M_1\times M_2$ is $L^{p_2}(L^{p_1})$-parabolic if and only if
	\begin{equation}\label{eq-J-N1D2}
		\cj:=\int^\infty \left( \int^\infty \left[ \int_{r\vee s}^\infty
		t^{1-n_1-n_2}\,dt\right]^{q_1} r^{n_1-1}\,dr\right)^{\frac{q_2}{q_1}}s^{n_2-1}\,ds=\infty,
	\end{equation}
	where we have used the polynomial estimates \eqref{eq-polynomial-growth-Ni}.

Set $N:=n_1+n_2$. If $N\le 2$, then $\cj=\infty$, since the inner integral satisfies
$$\int_{r\vee s}^\infty t^{1-n_1-n_2}\,dt=\int_{r\vee s}^\infty t^{1-N}\,dt=\infty.$$
Now assume $N>2$. Then, using
	\[
	\int_{r\vee s}^\infty t^{1-n_1-n_2}\,dt=\int_{r\vee s}^\infty t^{1-N}\,dt\simeq (r\vee s)^{2-N},
	\]
we obtain
	\begin{align*}
		\cj
&\simeq \int^\infty \left( \int^\infty \left( r\vee s\right)^{q_1(2-N)} r^{n_1-1} \,dr\right)^{\frac{q_2}{q_1}}s^{n_2-1}\,ds  \\
		&\simeq \int^\infty \left( \int_s^\infty r^{q_1(2-N)+n_1-1}\,dr
		+\int_1^s s^{q_1(2-N)}r^{n_1-1}\,dr\right)^{\frac{q_2}{q_1}}s^{n_2-1}\,ds  \\
		&\simeq \int^\infty \left(\int_s^\infty r^{q_1(2-N)+n_1-1}\,dr
		+ s^{q_1(2-N)+n_1}\right)^{\frac{q_2}{q_1}}s^{n_2-1}\,ds .
	\end{align*}
If $q_1(2-N)+n_1\ge 0$, then again $\cj=\infty$, since
$$\int_s^\infty r^{q_1(2-N)+n_1-1}\,dr=\infty.$$
	If $q_1(2-N)+n_1<0$, then
	\[
	\cj\simeq \int^\infty s^{q_2(2-N)+n_1\frac{q_2}{q_1}+n_2-1}\,ds,
	\]
	which is finite if and only if
	\[
	q_2(2-N)+\frac{n_1q_2}{q_1}+n_2<0.
	\]
Altogether, we obtain that $\cj<\infty$ if and only if
$$
\begin{cases}
  N=n_1+n_2>2; \\
  q_1(2-N)+n_1<0; \\
  q_2(2-N)+n_1\frac{q_2}{q_1}+n_2<0.
\end{cases}
\quad \Leftrightarrow\quad 2<\frac{n_1}{p_1}+\frac{n_2}{p_2}.
$$
Equivalently, we have
$$
\cj=\infty
\quad \Leftrightarrow\quad \frac{n_1}{p_1}+\frac{n_2}{p_2}\le 2.
$$
This proves the desired equivalence in \eqref{eq-generalized-product-threshold}.

It remains to explain why \eqref{eq-generalized-product-threshold} remains valid if either $M_1$ or $M_2$ is compact. Suppose, for example, that $M_2$ is compact. In this case, the mixed criterion \eqref{eq1-para-vol} from Proposition~\ref{prop-para-vol} reduces to
\[
\int^\infty \left(\int_r^\infty \frac{t\,dt}{V_1(t)}\right)^{q_1} V_1'(r)\,dr=\infty,
\]
that is,
\[
\int^\infty \left(\int_r^\infty t^{1-n_1}\,dt\right)^{q_1} r^{n_1-1}\,dr=\infty,
\]
which is equivalent to
\[
\frac{n_1}{p_1}\le 2.
\]
This is exactly \eqref{eq-generalized-product-threshold}, since $n_2=0$ when $M_2$ is compact. The case in which $M_1$ is compact is analogous, with the roles of $(n_1,p_1)$ and $(n_2,p_2)$ interchanged. Altogether, we complete the proof.
\end{proof}

\subsection{A unified perspective via the effective dimension}\label{ss6.2}

The following  anisotropic effective dimension criterion is an immediate consequence of Proposition~\ref{prop-para-mix-Rn}.

\begin{corollary}\label{cor-effDim}
 For $i=1,2$, let $p_i \in (1, \infty)$ and  $q_i=p_i'$, that is, $\frac1{p_i}+\frac 1{q_i}=1$.
Then, the following three statements are equivalent:
\begin{enumerate}[\rm (i)]

\item $\rr^{n_1}\times\rr^{n_2}$ is $L^{p_2}(L^{p_1})$-parabolic.

  \item  $\rr^{n_1}\times\rr^{n_2}$ admits the $L^{q_2}(L^{q_1})$-Liouville property.

  \item The $(p_1,p_2)$-effective dimension on $\rr^{n_1}\times\rr^{n_2}$, denoted by
  $$D_{\rm eff}:=\frac{n_1}{p_1}+\frac{n_2}{p_2}$$
  satisfies
  $D_{\rm eff}\le 2.$
\end{enumerate}
\end{corollary}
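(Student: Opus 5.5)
The corollary follows by combining Theorem~\ref{thm-para-Green} with Proposition~\ref{prop-para-mix-Rn}. For (i)$\Leftrightarrow$(ii), Theorem~\ref{thm-para-Green} (items (i) and (iv)) applies to any product $M=M_1\times M_2$ of noncompact complete manifolds. It says that $L^{p_2}(L^{p_1})$-parabolicity is equivalent to the $L^{p_2'}(L^{p_1'})$-Liouville property. Since $q_i=p_i'$, this is exactly (i)$\Leftrightarrow$(ii) with $M_i=\rr^{n_i}$. That theorem needs no curvature or Harnack hypothesis, so nothing further has to be checked for this step.

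For (i)$\Leftrightarrow$(iii), I would apply Proposition~\ref{prop-para-mix-Rn} with $M_i=\rr^{n_i}$. Each factor is connected, geodesically complete, and flat, so $\operatorname{Ric}_{M_i}=0\ge 0$. Taking $o_i=0$, one has $V_i(0,t)=\omega_{n_i}t^{n_i}$ and $V_i'(0,t)=n_i\omega_{n_i}t^{n_i-1}$ exactly. Hence \eqref{eq-polynomial-growth-Ni} holds with order $n_i$. The proposition then gives that $\rr^{n_1}\times\rr^{n_2}$ is $L^{p_2}(L^{p_1})$-parabolic iff $\frac{n_1}{p_1}+\frac{n_2}{p_2}\le 2$, which is (iii).

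The only point that needs care is the degenerate case where some $n_i=0$. In that case the factor $\rr^0$ is a point, which is compact, and the proposition's convention $n_i:=0$ for a compact factor applies. The product remains noncompact as long as $n_1+n_2\ge1$, and this is implicitly assumed, since otherwise parabolicity is meaningless. Within Proposition~\ref{prop-para-mix-Rn} the compact case is already treated by reducing to the one-factor criterion, so nothing new is required.

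I do not expect a genuine obstacle, because all the analytic work sits in Proposition~\ref{prop-para-mix-Rn} and Theorem~\ref{thm-para-Green}. The one step to state explicitly is that a zero-dimensional factor fits the proposition's hypotheses; for $\rr^{n_i}$ with $n_i\ge1$ the verification is immediate.
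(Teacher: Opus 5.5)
Your proof is correct and follows the paper's own argument. The paper presents the corollary as an immediate consequence of Proposition~\ref{prop-para-mix-Rn} applied to $M_i=\rr^{n_i}$, with (i)$\Leftrightarrow$(ii) supplied by Theorem~\ref{thm-para-Green} (equivalently Theorem~\ref{thm-main}); your explicit checks of flatness, exact polynomial volume growth, and the zero-dimensional-factor convention are what the word ``immediate'' leaves unsaid.
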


To the best of our knowledge, the anisotropic effective dimension criterion in Corollary~\ref{cor-effDim} has not previously appeared in the literature. It extends the classical isotropic Liouville theorem and identifies \(D_{\mathrm{eff}}\) as a sharp invariant.

As a consistency check, setting $n_1 = n_2 = N$ and $p_1 = p_2 = p$ in Corollary~\ref{cor-effDim} recovers the classical isotropic $L^q$-Liouville criterion:
\[
\mathbb R^{2N} \text{ admits the } L^q\text{-Liouville property} \quad \Leftrightarrow \quad D_{\mathrm{eff}} = \frac{2N}{p} \le 2,
\]
where $1/q + 1/p = 1$.
Note that the value $2$ in $D_{\mathrm{eff}} \le 2$ is invariably the order of the elliptic operator;
what changes is the effective dimension $D_{\mathrm{eff}}$ induced by the mixed norm, which encodes the anisotropic geometry of the underlying product structure.

This effective dimension is not merely a notational shorthand. It arises naturally from the scaling properties of the mixed norm. Indeed, consider on $\rr^{n_1}\times\rr^{n_2}$ the anisotropic dilation
\[
T_\lambda:\ (x,y) \mapsto (\lambda^{\alpha} x, \, \lambda^{\beta} y), \qquad \alpha,\, \beta \in (0, \infty).
\]
Under this dilation, the mixed norm scales as
\[
\|u \circ T_\lambda\|_{L^{p_2}(L^{p_1})(\rr^{n_1}\times\rr^{n_2})}
=
\lambda^{\frac{\alpha n_1}{p_1} + \frac{\beta n_2}{p_2}}
\|u\|_{L^{p_2}(L^{p_1})(\rr^{n_1}\times\rr^{n_2})}.
\]
With the natural choice $\alpha = \beta = 1$, the scaling exponent becomes
\[
\frac{n_1}{p_1} + \frac{n_2}{p_2} = D_{\mathrm{eff}}.
\]
Thus $D_{\mathrm{eff}}$ governs the homogeneous scaling of the mixed norm, just as the total dimension $N$ governs the scaling of the standard isotropic norm $\|\cdot\|_{L^p(\mathbb R^N)}$.

\addcontentsline{toc}{section}{References}

\providecommand{\bysame}{\leavevmode\hbox to3em{\hrulefill}\thinspace}

\medskip

\noindent Liguang Liu

\smallskip

\noindent School of Mathematics,
Renmin University of China,
Beijing 100872, People's Republic of China

\smallskip

\noindent{\it E-mail:} \texttt{liuliguang@ruc.edu.cn}

\medskip

\noindent Yuhua Sun

\smallskip

\noindent School of Mathematical Sciences and LPMC,
Nankai University,
Tianjin 300071, People's Republic of China

\smallskip

\noindent{\it E-mail:} \texttt{sunyuhua@nankai.edu.cn}

\medskip

\noindent Suqing Wu

\smallskip

\noindent School of Science,
		Dalian Maritime University,
		Dalian 116024, People's Republic of China

\smallskip

\noindent{\it E-mail:} \texttt{wusq@dlmu.edu.cn}

\end{document}